%
%
%


\documentclass[reqno]{amsart}

\usepackage{mathrsfs}
\usepackage{enumitem}
\usepackage{amssymb}
\usepackage{enumitem}
\usepackage{thm-restate}
\usepackage{tikz}
\usetikzlibrary{shapes.geometric}

\usepackage{nicefrac}
\usepackage[all]{xy}
\usepackage{todonotes}
\usepackage{color}
\usepackage{xcolor}

\usepackage[backref, hypertexnames=false]{hyperref}

\usepackage{comment}
\specialcomment{proofdetail}{\color{blue}}{\color{black}}

\excludecomment{proofdetail}


\theoremstyle{definition}
\newtheorem{thm}{Theorem}[section]
\newtheorem{cor}[thm]{Corollary}
\newtheorem{prop}[thm]{Proposition}

\newtheorem{defn}[thm]{Definition}
\newtheorem{ex}[thm]{Example}
\newtheorem{rmk}[thm]{Remark}

\newtheorem*{claim*}{Claim}

\numberwithin{equation}{section}


\newcommand\abcon{\ll}


\begin{document}

\title[Algorithmic randomness and the weak merging]
{Algorithmic randomness and the weak merging of computable probability measures}
\author{Simon M. Huttegger}

\address{Department of Logic and Philosophy of Science \\ 5100 Social Science Plaza \\ University of California, Irvine \\ Irvine, CA 92697-5100, U.S.A.}

\email{shuttegg@uci.edu}

\urladdr{http://faculty.sites.uci.edu/shuttegg/}

\author{Sean Walsh}

\address{Department of Philosophy \\ University of California, Los Angeles \\ 390 Portola Plaza, Dodd Hall 321 \\ Los Angeles, CA 90095-1451}

\email{walsh@ucla.edu}

\urladdr{http://philosophy.ucla.edu/person/sean-walsh/}

\author{Francesca Zaffora Blando}

\address{Department of Philosophy\\ Carnegie Mellon University \\Baker Hall 161\\
5000 Forbes Avenue\\ Pittsburgh, PA 15213}

\email{fzaffora@andrew.cmu.edu}

\urladdr{https://francescazafforablando.com/}

\subjclass[2010]{Primary 03D32 Secondary: 60A10, 94A17}

\keywords{}

\date{\today}

\thanks{Many thanks to the reviewers and editors for helpful comments and feedback. Thanks also to audiences at the Berkeley Logic Colloquium; the Special Session on Computability at the AMS 2025 Fall Central Sectional Meeting; the Iowa Colloquium on Information, Complexity, and Logic; the University of Chicago Logic Seminar; the UCLA Logic Seminar; the UIC Logic Seminar; the Notre Dame Logic Seminar; and the Madison Logic Seminar.}

\begin{abstract}
We characterize Martin-L\"of randomness and Schnorr randomness in terms of the merging of opinions, along the lines of the Blackwell-Dubins Theorem \cite{Blackwell1962-ux}. After setting up a general framework for defining notions of merging randomness, we focus on finite horizon events: that is, on weak merging in the sense of Kalai-Lehrer \cite{Kalai1994-ta}. In contrast to Blackwell-Dubins and Kalai-Lehrer, we consider not only the total variational distance but also the Hellinger distance and the Kullback-Leibler divergence. Our main result is a characterization of Martin-L\"of randomness and Schnorr randomness in terms of weak merging and the summable Kullback-Leibler divergence. The main proof idea is that the Kullback-Leibler divergence between $\mu$ and $\nu$, at a given stage of the learning process, is exactly the incremental growth, at that stage, of the predictable process of the Doob decomposition of the $\nu$-submartingale $L(\sigma)=-\ln \frac{\mu(\sigma)}{\nu(\sigma)}$. These characterizations of algorithmic randomness notions in terms of the Kullback-Leibler divergence can be viewed as global analogues of Vovk's theorem \cite{Vovk1987} on what transpires locally with individual Martin-L\"of $\mu$- and $\nu$-random points and the Hellinger distance between $\mu,\nu$.
\end{abstract}

\maketitle

\setcounter{tocdepth}{4}
\setcounter{secnumdepth}{4}

\tableofcontents

\section{Introduction}\label{sec:intro}

Merging of opinions is a phenomenon that plays an important role in many fields, from the foundations of probability theory to Bayesian statistics, to economics and game theory, to machine learning. It subsumes a family of mathematical results that specify the conditions under which the predictions of different forecasters are guaranteed to become and stay close almost surely with increasing information. The main goal of this article is to study merging of opinions from a pointwise perspective using the tools of \emph{algorithmic randomness}\textemdash a branch of computability theory that offers a pointwise approach to computable measure theory and analysis \cite{HR2021}. More precisely, we propose a general framework for defining natural notions of \emph{merging randomness}: that is, notions of algorithmic randomness defined in terms of suitably effectivized notions of merging. Our main result, Theorem~\ref{thm:main:mlr:weak} below, provides a novel characterization of both Martin-L\"{o}f randomness and Schnorr randomness\textemdash two central algorithmic randomness notions\textemdash in terms of a weak form of merging. 

Arguably, the most well-known merging-of-opinions theorem in the literature is the Blackwell-Dubins Theorem \cite{Blackwell1962-ux}, which is concerned with predictions about infinite horizon events, and which takes the distance between successive probabilistic forecasts as given in terms of the \emph{total variational distance}. Kalai and Lehrer \cite{Kalai1994-ta} proposed  a notion of merging of opinions that only involves one-step-ahead predictions, which they called \emph{weak merging}. It is this weaker form of merging that features in our results. However, rather than merely focusing on the total variational distance like Kalai and Lehrer (and Blackwell and Dubins) did, here we also consider notions of merging randomness defined in terms of the \emph{Hellinger distance} and the \emph{Kullback-Leibler divergence}. In particular, our characterization of Martin-L\"{o}f randomness and Schnorr randomness via weak merging relies on the Kullback-Leibler divergence. Our results on strong merging in the sense of Blackwell and Dubins will appear in a companion article.

In this section we provide an overview of our results. The more elaborate proofs of the main results are given in later sections\textemdash in particular, the proofs of Theorem \ref{thm:main:mlr:weak}, Theorem \ref{thm:main:mlr:weakmiddle}, and Theorem \ref{thm:crzero}. A few short results and immediate consequences are given in this section as we proceed with the statements of the definitions and theorems.

The basic objects of study in this article are computable probability measures~$\nu$ on Cantor space $2^{\mathbb{N}}$ with full support. These are given by sequences of positive real numbers $\nu(\sigma)$, which are uniformly computable as $\sigma$ ranges over binary strings from $2^{<\mathbb{N}}$, and which satisfy the following defining properties of a probability measure:
\begin{equation*}
\nu(\emptyset)=1 \text{ and } \nu(\sigma 0)+\nu(\sigma 1)=\nu(\sigma),
\end{equation*}
where $\emptyset$ denotes the length-zero string and $\sigma$ again ranges over elements of $2^{<\mathbb{N}}$. Elements $\sigma$ of $2^{<\mathbb{N}}$ are functions $\sigma:\{0, \ldots, \ell-1\}\rightarrow \{0,1\}$, where $\ell = |\sigma|$ is the length of $\sigma$ (hence, the indexing starts at zero, and we write $\sigma= \sigma(0)\sigma(1)\cdots\sigma(\ell-1)$). By Carath\'eodory's Extension Theorem, all probability measures on Cantor space are induced by functions of this type by setting $\nu([\sigma])=\nu(\sigma)$, where $[\sigma]$ is the basic clopen $[\sigma]=\{\omega\in 2^{\mathbb{N}} : \forall \; i<\left|\sigma\right| \; \sigma(i)=\omega(i)\}$. We assume throughout that we are working with probability measures $\nu$ with \emph{full support}: that is, $\nu([\sigma])>0$ for all $\sigma$ in $2^{<\mathbb{N}}$. To streamline our notation, we often write $\nu(\sigma)$, rather than $\nu([\sigma])$.

For $n\geq 0$, we use $\mathscr{F}_n$ for the sub-$\sigma$-algebra of the Borel $\sigma$-algebra on Cantor space generated by the length-$n$ binary strings\textemdash that is, by the basic clopen events of the form $[\sigma]$, where $\sigma$ has length $n$. We use the following canonical versions of the conditional expectation and conditional probability:
\begin{equation}\label{eqn:preferredconditional}
\mathbb{E}_{\nu}[f\mid \mathscr{F}_n](\omega) = \frac{1}{\nu([\omega\upharpoonright n])} \int_{[\omega\upharpoonright n]} f \; d\nu, \hspace{10mm} \nu(A\mid \mathscr{F}_n)(\omega) = \nu(A\mid [\omega\upharpoonright n]),
\end{equation}
where $\omega\upharpoonright n$ denotes the initial segment of $\omega\in2^\mathbb{N}$ of length $n$.

For any probability measure $\nu$ on Cantor space and any sub-$\sigma$-algebra $\mathscr{G}$ of the Borel $\sigma$-algebra on Cantor space, we let $\nu\upharpoonright \mathscr{G}$ be the restriction of $\nu$ to events in $\mathscr{G}$. We combine this condition with the notation for the conditional probability by looking at the restriction $\nu(\cdot\mid \mathscr{F}_n)(\omega)\upharpoonright \mathscr{G}$ of the conditional probability $\nu(\cdot\mid \mathscr{F}_n)(\omega)$ to $\mathscr{G}$.

As standardly done, we use $\nu\abcon \mu$ to indicate that $\nu$ is \emph{absolutely continuous} with respect to~$\mu$. Recall that $\nu\abcon \mu$ if and only if, for all Borel events $A$, $\mu(A)=0$ implies that $\nu(A)=0$. Similarly, $\nu\upharpoonright \mathscr{G} \abcon \mu \upharpoonright \mathscr{G}$ holds if and only if, for all events $A$ in $\mathscr{G}$, one has that $\mu(A)=0$ implies that $\nu(A)=0$. When $\nu\abcon \mu$, the expression $\frac{d\nu}{d\mu}$ denotes the \emph{Radon-Nikodym derivative}; namely, the Borel measurable function such that $\nu(A)=\int_A \frac{d\nu}{d\mu} \; d\mu$ for all Borel events $A$, which is unique with this property up to $\mu$-a.s. equivalence. Similarly, when $\nu\upharpoonright \mathscr{G} \abcon \mu \upharpoonright \mathscr{G}$, the expression $\frac{d(\nu\upharpoonright \mathscr{G})}{d(\mu\upharpoonright \mathscr{G})}$ denotes the Radon-Nikodym derivative; namely, the $\mathscr{G}$-measurable function such that $\nu(A)=\int_A \frac{d\nu}{d\mu} \; d\mu$ for all events $A$ in $\mathscr{G}$, which is unique with this property up to $\mu\upharpoonright \mathscr{G}$-a.s. equivalence.

There are several important notions of information distance between probability measures that can be used to specify what it means for two probability measures to merge. As mentioned above, the three information distances we focus on in this article are the following:
\begin{defn}\label{defn:threedistance}
Suppose that $\mathscr{G}$ is a sub-$\sigma$-algebra of the Borel $\sigma$-algebra and that $\nu\upharpoonright \mathscr{G} \abcon \mu \upharpoonright \mathscr{G}$. Then we define:
\begin{enumerate}[leftmargin=*]
\item\label{defn:threedistance:1} \emph{Total variational distance}: $$T(\nu \upharpoonright \mathscr{G},\mu\upharpoonright \mathscr{G}):=\sup_{A\in \mathscr{G}} \left|\nu(A)-\mu(A)\right|$$
\item\label{defn:threedistance:2} \emph{Hellinger distance}: $$H(\nu\upharpoonright \mathscr{G}, \mu\upharpoonright \mathscr{G}) := \bigg(2\cdot \bigg(1-\mathbb{E}_{\mu\upharpoonright \mathscr{G}} \bigg[\frac{d(\nu\upharpoonright \mathscr{G})}{d(\mu\upharpoonright \mathscr{G})}^{\frac{1}{2}}\bigg] \bigg)\bigg)^{\frac{1}{2}}$$ which is often studied by recourse to the \emph{the Hellinger affinity}\footnote{Sometimes, the subscript ``$2$'' is put on the Hellinger affinity $\alpha$ (cf. \cite[p. 61]{Pollard2002-ih}). We avoid doing this here because we will later put other subscripts on the information distance symbols.} $$\alpha (\nu\upharpoonright \mathscr{G}, \mu\upharpoonright \mathscr{G}) := \mathbb{E}_{\mu\upharpoonright \mathscr{G}}   \bigg[\frac{d(\nu\upharpoonright \mathscr{G})}{d(\mu\upharpoonright \mathscr{G})}^{\frac{1}{2}}\bigg]$$ 
\item\label{defn:threedistance:3} \emph{Kullback-Leibler divergence}: $$D(\nu\upharpoonright \mathscr{G}\mid  \mu\upharpoonright \mathscr{G}) := \mathbb{E}_{\mu\upharpoonright \mathscr{G}} \bigg[\frac{d(\nu\upharpoonright \mathscr{G})}{d(\mu\upharpoonright \mathscr{G})} \ln \frac{d(\nu\upharpoonright \mathscr{G})}{d(\mu\upharpoonright \mathscr{G})}\bigg]=\mathbb{E}_{\nu\upharpoonright \mathscr{G}} \bigg[\ln \frac{d(\nu\upharpoonright \mathscr{G})}{d(\mu\upharpoonright \mathscr{G})}\bigg]$$
\end{enumerate}
\end{defn}
\noindent Of course, since $\mathbb{E}_{\nu\upharpoonright \mathscr{G}} [f]=\mathbb{E}_{\nu} [f]$ for all $\mathscr{G}$-measurable functions $f$, the restriction symbols can be dropped on expectations, which we will do in what follows. While it may not be obvious from the definition, one can show that the Kullback-Leibler divergence is non-negative.\footnote{This follows from Jensen's inequality applied to the convex function $x\ln x$ (see \cite[Theorem 2.3, p. 23]{Polyanskiy2025-xa}).}

While the total variational distance and the Hellinger distance are metrics, the Kullback-Leibler divergence is not even symmetric. Hence, with the Kullback-Leibler divergence, it is important to note the convention that the lower measure $\nu\upharpoonright \mathscr{G}$, in the ordering of absolute continuity, is listed first in the expression ${D(\nu\upharpoonright \mathscr{G}\mid  \mu\upharpoonright \mathscr{G})}$. 

These distance-like notions each have a rich history within information geometry, and they can be unified via Csisz\'ar's notion of $f$-divergences (see \cite{Csiszar1967-dk,Csiszar2004-tk} and \cite[Chapter 7]{Polyanskiy2025-xa}). However, we will not pursue this generalization here. More important for us is to note that the distances are related as follows, where the last is called Pinsker's inequality (cf. \cite{Gibbs2002-kr} and \cite[pp. 61-62]{Pollard2002-ih}):
\begin{align}
T\leq H, & \hspace{20mm} H^2\leq 2T, \notag \\
H^2\leq D, & \hspace{20mm} T^2 \leq \frac{1}{2} D \hspace{10mm} \label{eqn:theinequalities}
\end{align}
Further, if $d$ is any of these distances, then, for any $\sigma$-algebras $\mathscr{G}, \mathscr{H}$, one has:
\begin{equation}\label{eqn:donskerconsequence}
\mathscr{G}\subseteq \mathscr{H} \Longrightarrow d(\nu\upharpoonright \mathscr{G}, \mu\upharpoonright \mathscr{G})\leq d(\nu\upharpoonright \mathscr{H}, \mu\upharpoonright \mathscr{H})
\end{equation}
This is obvious in the case of the total variational distance, since we have defined it in terms of a supremum. For the Kullback-Leibler divergence, \eqref{eqn:donskerconsequence} comes from a supremum formulation known as the Donsker-Varadhan variational representation. For the Hellinger distance, it follows from a variational representation for $f$-divergences \cite[\S{7.13}]{Polyanskiy2025-xa}.

The notions of merging randomness we study in this article are relative to four parameters:
\begin{defn}
A \emph{merging quadruple} is a four-tuple~$(p, \preceq, \mathscr{G}_n,\rho)$ such that
\begin{enumerate}[leftmargin=*]
    \item $p$ is a \emph{merging exponent}: i.e., $p=0$ or $p\geq 1$;
    \item $\preceq$ is a \emph{merging relation}: i.e., $\preceq$ is a reflexive binary relation on a subset of the probability measures with full support;
    \item $\mathscr{G}_n$ is a \emph{merging horizon}: i.e., $\mathscr{G}_n$ is a sequence of sub-$\sigma$-algebras of the Borel $\sigma$-algebra such that $\mathscr{F}_{n+1}\subseteq \mathscr{G}_n$ for all $n\geq 0$;
    \item $\rho$ is a \emph{merging distance}: i.e., $\rho$ is one of the three information distances (total variational, Hellinger, or Kullback-Leibler) from Definition \ref{defn:threedistance}.  
\end{enumerate}
\end{defn}
A merging relation captures the sense in which two probability measures agree before any observations are made. The most prominent example is absolute continuity. Other merging relations are defined in Definition \ref{defn:twomergingpreorders}. Merging horizons, on the other hand, capture the types of events relative to which one can study the merging of probability measures. Concrete examples are given below in Definition \ref{defn:weakvsstrong}. Lastly, merging distances and merging exponents quantify distances between probability measures (see Definition \ref{defn:randomness:mr}).

\begin{defn}\label{defn:therv}
Given a merging horizon $\mathscr{G}_n$ and a merging distance $\rho$, we define the random variable:
\begin{equation}\label{eqn:thesequenced}
\rho_{\mathscr{G}_n}(\nu,\mu)(\omega)=\rho(\nu(\cdot \mid \mathscr{F}_n)(\omega) \upharpoonright \mathscr{G}_n,  \mu(\cdot \mid \mathscr{F}_n)(\omega)\upharpoonright \mathscr{G}_n)
\end{equation}
\end{defn}

In this, $\nu(\cdot \mid \mathscr{F}_n)(\omega) \upharpoonright \mathscr{G}_n$ again denotes $\nu(\cdot \mid \mathscr{F}_n)(\omega)$ restricted to $\mathscr{G}_n$ (and similarly for $\mu$). In the case where $\rho$ is the Kullback-Leibler divergence, we write $D_{\mathscr{G}_n}(\nu\mid \mu)(\omega)$ rather than $D_{\mathscr{G}_n}(\nu, \mu)(\omega)$. Even though it is not properly an information distance, we also use the notation from~(\ref{eqn:thesequenced}) for the Hellinger affinity and write $\alpha_{\mathscr{G}_n}(\nu,\mu)(\omega)$.

The following definition distinguishes between two types of merging horizons:

\begin{defn}\label{defn:weakvsstrong}
If $\mathscr{G}_n=\mathscr{F}_{n+1}$ for all $n\geq 0$, then the merging horizon is said to be \emph{weak}. If $\mathscr{G}_n$ is the Borel $\sigma$-algebra for all $n\geq 0$, then the merging horizon is said to be \emph{strong}. 
\end{defn}
\noindent As already explained above, in this article we mostly focus on weak merging and study strong merging in a companion article. Obviously, there are situations which fall in between, and towards the end of the article we discuss some of the middle ground (see \S\ref{sec:middleground}).

The primary merging relations we are interested in, in the context of weak merging, are defined as follows:

\begin{defn}\label{defn:twomergingpreorders}\;
\begin{enumerate}[leftmargin=*]
\item\label{defn:twomergingpreorders:0} $\nu\abcon_{kl} \mu$ if and only if  $\sup_n \mathbb{E}_{\nu} \ln \frac{\nu(\cdot \upharpoonright n)}{\mu(\cdot \upharpoonright n)}$ is finite;
\item\label{defn:twomergingpreorders:1} $\nu\abcon_{klc} \mu$ if and only if  $\sup_n \mathbb{E}_{\nu} \ln \frac{\nu(\cdot \upharpoonright n)}{\mu(\cdot \upharpoonright n)}$ is finite and computable;
\item $\nu\abcon_{bd}\mu$ if and only if $\mathbb{E}_{\nu}\sup_n \frac{\nu(\cdot \upharpoonright n)}{\mu(\cdot \upharpoonright n)}$ is finite;
\item $\nu\abcon_{bdc}\mu$ if and only if $\mathbb{E}_{\nu}\sup_n \frac{\nu(\cdot \upharpoonright n)}{\mu(\cdot \upharpoonright n)}$ is finite and computable;
\item\label{defn:twomergingpreorders:2} $\nu\abcon_{comp} \mu$ if and only if there is a computable function $m:\mathbb{Q}^{>0}\rightarrow \mathbb{Q}^{>0}$ such that, for all rationals $\epsilon>0$ and all Borel events $A$, if $\mu(A)<m(\epsilon)$, then $\nu(A)<\epsilon$.
\end{enumerate}
\end{defn}
\noindent While it is not obvious from the definitions, the expectations in (\ref{defn:twomergingpreorders:0})-(\ref{defn:twomergingpreorders:1}) are non-negative, since they are expectations of submartingales which are null at $n=0$ (cf. Example~\ref{ex:oursubmartingale}). One has the following implication diagram between the different merging relations discussed in this paper:
\footnotesize 
\begin{equation}\label{eqn:abcondiagram}
\xymatrix{\nu\abcon_{bd}\mu \ar@{->}^{Prop.~\ref{prop:bndimplieskl}}[r] & \nu\abcon_{kl}\mu \ar@{->}^{Prop.~\ref{prop:corofvovk}}[r] &  \nu\abcon_{\mathsf{MLR}}\mu    \ar@{->}^{Prop.~\ref{prop:shiryaev}}[r] & \nu\abcon\mu \\ 
 \nu\abcon_{bdc}\mu \ar@{->}[u]& \nu\abcon_{klc}\mu\ar@{->}[u] &  \nu\abcon_{comp}\mu  \ar@{->}[u]_{Lem.~\ref{lem:comptoMLR}} &
}
\end{equation}
\normalsize 
The notion $\nu\abcon_{comp} \mu$ is just the effectivization of the $\epsilon$-$\delta$ characterization of absolute continuity (see \cite[Definition 5]{hoyrup2011absolute} and \cite[Definition 2.14]{HR2021}). The notion $\nu\abcon_{\mathsf{MLR}}\mu$ is defined in Definition~\ref{defn:mlrcontain} below. One has that $\nu\abcon_{bdc}\mu$ and $\nu\abcon_{comp}\mu$ are entailed by  $\frac{d\nu}{d\mu}$ being a computable point of the computable Polish space $L_2(\mu)$ of square integrable functions (cf. Proposition~\ref{prop:iamgood}); but we do not know if a similar entailment holds for $\nu\abcon_{klc}\mu$. And we do not know whether, on the bottom row of Diagram~\ref{eqn:abcondiagram}, there are any implications from left to right (cf. Remark~\ref{rmk:below}). Since all the notions in Diagram~\ref{eqn:abcondiagram} imply $\nu\abcon \mu$, the $\nu$-expectations in Definition~\ref{defn:twomergingpreorders} can be turned into $\mu$-expectations by adding on the Radon-Nikodym derivative $\frac{d\nu}{d\mu}$. We prefer to express them as $\nu$-expectations since our randomness notation, to which we presently turn, is centered around $\nu$.

A merging quadruple induces a notion of merging randomness as follows:
\begin{defn}\label{defn:randomness:mr}
Suppose that $(p, \preceq, \mathscr{G}_n,\rho)$ is a merging quadruple. Suppose that $\nu$ is a computable probability measure with full support.
\begin{enumerate}[leftmargin=*]
\item If $p=0$, then $\omega$ is \emph{$\nu$-merging random with respect to the merging quadruple} ${(0,\preceq, \mathscr{G}_n,\rho)}$, abbreviated $\mathsf{MR}^{\nu}_0(\preceq, \mathscr{G}_n,\rho)$, if, for all computable probability measures $\mu$ with $\nu\preceq \mu$ and with full support, one has that $\lim_n \rho_{\mathscr{G}_n}(\nu,\mu)(\omega)=0$.
\item If $p\geq 1$, then $\omega$ is \emph{$\nu$-merging random with respect to the merging quadruple} ${(p,\preceq, \mathscr{G}_n,\rho)}$, abbreviated $\mathsf{MR}^{\nu}_p(\preceq, \mathscr{G}_n,\rho)$, if, for all computable probability measures $\mu$ with $\nu\preceq \mu$ and with full support, one has that $\sum_n (\rho_{\mathscr{G}_n}(\nu,\mu)(\omega))^p<\infty$.
\end{enumerate}
\end{defn}
\noindent As one can see from this definition, the purpose of the merging exponent is to track whether the sequence $\rho_{\mathscr{G}_n}(\nu,\mu)(\omega)$ of non-negative reals is in the Banach space $c_0$ of sequences $x_n$ satisfying $\lim_n x_n=0$, or whether the sequence $\rho_{\mathscr{G}_n}(\nu,\mu)(\omega)$ is in the Banach space $\ell^p$ of sequences $x_n$ satisfying $\sum_n \left|x_n\right|^p<\infty$. This is useful to keep track of, since convergence is sensitive to the value of $p\geq 1$. Sometimes, in what follows, to have fewer parentheses, we write $\rho_{\mathscr{G}_n}^p(\nu,\mu)(\omega)$ for $(\rho_{\mathscr{G}_n}(\nu,\mu)(\omega))^p$.

There are some obvious containment relations which are worth noting at the outset. First, since convergence of a series implies convergence of the associated limit to zero, one has:
\begin{equation}\label{eqn:sumconvergesimplieslimitgoestozero}
p\geq 1 \Longrightarrow \mathsf{MR}^{\nu}_p(\preceq, \mathscr{G}_n,\rho)\subseteq \mathsf{MR}^{\nu}_0(\preceq, \mathscr{G}_n,\rho)
\end{equation}
Further (\ref{eqn:donskerconsequence}) implies that these randomness notions are anti-monotonic in their parameters. In particular, for $p=0$, if $\preceq$ is a sub-relation of $\preceq^{\prime}$, and $\mathscr{G}_n$ is a subset of $\mathscr{G}_n^{\prime}$ for all $n\geq 0$, and $\rho\leq c\cdot \rho^{\prime}$ for some constant $c>0$, then
\begin{equation}
\mathsf{MR}^{\nu}_0(\preceq^{\prime}, \mathscr{G}_n^{\prime},\rho^{\prime}) \subseteq \mathsf{MR}^{\nu}_0(\preceq, \mathscr{G}_n,\rho)
\end{equation}
Likewise, for $p,p^{\prime}\geq 1$, if $\preceq$ is a sub-relation of $\preceq^{\prime}$, and $\mathscr{G}_n$ is a subset of $\mathscr{G}_n^{\prime}$  for all $n\geq 0$, and $\rho^p\leq c\cdot (\rho^{\prime})^{p^{\prime}}$ for some constant $c>0$, then
\begin{equation}
\mathsf{MR}^{\nu}_{p^{\prime}}(\preceq^{\prime}, \mathscr{G}_n^{\prime},\rho^{\prime} ) \subseteq \mathsf{MR}_p^{\nu}(\preceq, \mathscr{G}_n,\rho)
\end{equation}
To illustrate anti-monotonicity with respect to exponents $\geq 1$, the four inequalities in (\ref{eqn:theinequalities}) imply the following four containments:
\begin{align*}
\mathsf{MR}_1^{\nu}(\preceq, \mathscr{G}_n,H) & \subseteq  \mathsf{MR}_1^{\nu}(\preceq, \mathscr{G}_n,T) \hspace{5mm} & \mathsf{MR}_1^{\nu}(\preceq, \mathscr{G}_n,T)  \subseteq  \mathsf{MR}_2^{\nu}(\preceq, \mathscr{G}_n,H) \\
\mathsf{MR}_1^{\nu}(\preceq, \mathscr{G}_n,D) & \subseteq  \mathsf{MR}_2^{\nu}(\preceq, \mathscr{G}_n,H) \hspace{5mm} &\mathsf{MR}_1^{\nu}(\preceq, \mathscr{G}_n,D) \subseteq  \mathsf{MR}_2^{\nu}(\preceq, \mathscr{G}_n,T)
\end{align*}

As mentioned earlier, in this article we restrict attention to probability measures with full support (namely, those which satisfy  $\nu([\sigma])>0$ for all $\sigma$ in $2^{<\mathbb{N}}$). This is well-motivated by the interpretation of these probability measure as forecasting systems, and it dispenses us from needing to persistently qualify matters to rule out division by zero. It also implies $\nu\upharpoonright \mathscr{F}_n\abcon \mu\upharpoonright \mathscr{F}_n$ for all $n\geq 0$ for all those $\mu,\nu$ that we consider. That is to say, every pair of $\mu,\nu$ we consider are \emph{locally absolutely continuous}. Then, we can use the following special case of a classical theorem due to Kabanov, Lipcer and Shiryaev \cite{Kabanov1977-yr}, also contained in Shiryaev \cite[Theorem 4, pp. 169-171]{Shiryaev2019-rh}. Note that this is a classical result, and no assumptions are made about the computability of the measures.
\begin{restatable}{thm}{thmshiryaev}\label{thmshiryaev} Let $\nu$ and $\mu$ be two probability measures on Cantor space with full support. Then $\nu\abcon \mu$ if and only if, for $\nu$-a.s. many $\omega$, one has that $\sum_n H^2_{\mathscr{F}_{n+1}}(\mu,\nu)(\omega)<\infty$.
\end{restatable}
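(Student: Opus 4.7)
My plan is to reduce the theorem to the equality, up to $\nu$-null sets, of the positivity event $\{Z_\infty > 0\}$ and the Hellinger-summability event $\{P_\infty > 0\}$, via a multiplicative Doob-style decomposition of the likelihood ratio. First I set $Z_n(\omega) = \mu(\omega\upharpoonright n)/\nu(\omega\upharpoonright n)$, which by full support is a strictly positive $\nu$-martingale with $Z_0 = 1$, so $Z_n \to Z_\infty \in [0,\infty)$ $\nu$-a.s. Via the dominating measure $\mu + \nu$ and the standard Lebesgue decomposition, $\nu \ll \mu$ iff $Z_\infty > 0$ $\nu$-a.s. A direct computation gives $\mathbb{E}_\nu[\sqrt{Z_{n+1}} \mid \mathscr{F}_n] = \sqrt{Z_n}\,\alpha_n$, where $\alpha_n(\omega)$ is the $\mathscr{F}_n$-measurable conditional Hellinger affinity of $\mu(\cdot \mid \mathscr{F}_n)(\omega)$ and $\nu(\cdot \mid \mathscr{F}_n)(\omega)$ restricted to $\mathscr{F}_{n+1}$, lying in $(0,1]$ and satisfying $H^2_{\mathscr{F}_{n+1}}(\mu,\nu)(\omega) = 2(1-\alpha_n(\omega))$. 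Setting $P_n = \prod_{k<n}\alpha_k$, the quotient $M_n = \sqrt{Z_n}/P_n$ is a positive $\nu$-martingale with $M_0 = 1$, so $M_n \to M_\infty < \infty$ $\nu$-a.s.\ and $\sqrt{Z_\infty} = M_\infty P_\infty$ $\nu$-a.s. Since $\sum_n(1-\alpha_n) < \infty \iff \prod_n\alpha_n > 0$ for $\alpha_n \in (0,1]$, the theorem reduces to showing $\nu\{M_\infty P_\infty > 0\} = \nu\{P_\infty > 0\}$.

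The forward direction is then immediate: if $\nu \ll \mu$, then $Z_\infty > 0$ $\nu$-a.s., and the $\nu$-a.s.\ finiteness of $M_\infty$ forces $P_\infty > 0$ $\nu$-a.s.\ via $M_\infty P_\infty = \sqrt{Z_\infty}$. For the converse, assume $P_\infty > 0$ $\nu$-a.s.\ and localize by $\tau_\epsilon = \inf\{n : P_{n+1} \leq \epsilon\}$, which is an $\mathscr{F}_n$-stopping time because $P_{n+1}$ is $\mathscr{F}_n$-measurable. For $k \leq \tau_\epsilon$ one has $P_k > \epsilon$, so $M_k^2 = Z_k/P_k^2 \leq Z_k/\epsilon^2$; optional stopping for the $\nu$-martingale $Z_n$ then yields $\mathbb{E}_\nu(M^{\tau_\epsilon}_n)^2 \leq 1/\epsilon^2$. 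Hence the stopped martingale is bounded in $L^2(\nu)$, converges in $L^2(\nu)$ to $M^{\tau_\epsilon}_\infty$, and has mean $1$. To upgrade $L^2$-convergence to strict positivity $\nu$-a.s.\ on $\{\tau_\epsilon = \infty\}$, I would introduce the dual $\mu$-martingale $\tilde M_n = \sqrt{1/Z_n}/P_n$, exploit the symmetric identity $M_n \tilde M_n = 1/P_n^2$ (which holds pointwise), and combine the $\mu$-a.s.\ finiteness of $\tilde M_\infty$ with the localization bound. Since $\{P_\infty > 0\} = \bigcup_{\epsilon > 0}\{\tau_\epsilon = \infty\}$, letting $\epsilon \downarrow 0$ yields $M_\infty > 0$ $\nu$-a.s., hence $Z_\infty > 0$ $\nu$-a.s., hence $\nu \ll \mu$.

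The main obstacle is the final positivity step in the converse direction: an arbitrary positive martingale can have limit $0$ on a set of positive measure, so the argument must genuinely exploit the specific structure $M_n = \sqrt{Z_n}/P_n$ and the duality with $\tilde M_n$. The delicate point is translating $\mu$-a.s.\ assertions (coming from the $\mu$-martingale $\tilde M_n$) into $\nu$-a.s.\ assertions on $\{P_\infty > 0\}$, which cannot be done by invoking absolute continuity since $\nu \ll \mu$ is precisely what we are trying to establish. This dual-martingale bookkeeping is the technical heart of the Kabanov-Lipcer-Shiryaev result, and it is where the proof requires the most care.
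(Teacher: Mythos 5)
Your proposal takes a genuinely different route from the paper: the paper does not prove this theorem from first principles, but simply quotes the Kabanov--Lipcer--Shiryaev criterion as a known classical result and then verifies, via the computation $\mathbb{E}_{\mu}[\sqrt{\,\cdot\,}\mid\mathscr{F}_n](\omega)=\alpha_{\mathscr{F}_{n+1}}(\nu,\mu)(\omega)$, that in the full-support Cantor-space setting the KLS predictable criterion coincides with $\sum_n\frac{1}{2}H^2_{\mathscr{F}_{n+1}}(\mu,\nu)(\omega)$. Your identity $\mathbb{E}_{\nu}[\sqrt{Z_{n+1}}\mid\mathscr{F}_n]=\sqrt{Z_n}\,\alpha_n$ is exactly that computation, and the rest of your argument is an attempt to reprove KLS itself. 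The setup is sound: $Z_n$ and $M_n=\sqrt{Z_n}/P_n$ are indeed positive $\nu$-martingales, the reduction of the theorem to the $\nu$-a.s.\ equivalence of $\{Z_\infty>0\}$ and $\{P_\infty>0\}$ is correct, and the forward direction (absolute continuity implies summability) is complete as written, since $\sqrt{Z_\infty}=M_\infty P_\infty$ with $M_\infty<\infty$ $\nu$-a.s.

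The converse, however, has a genuine gap, and it sits exactly where you flag it. The localization bound $\mathbb{E}_{\nu}[(M_{n\wedge\tau_\epsilon})^2]\leq\epsilon^{-2}$ gives uniform integrability and $\mathbb{E}_{\nu}[M^{\tau_\epsilon}_\infty]=1$, but no amount of $L^2$-boundedness can by itself force $M_\infty>0$ on $\{\tau_\epsilon=\infty\}$: a strictly positive, $L^2$-bounded martingale of mean one can converge to zero on a set of positive measure (e.g.\ $M_n=\mathbb{E}[2\cdot 1_{A^c}\mid\mathscr{F}_n]$ for a suitable $A$ with $P(A)=\frac12$ and $P(A^c\mid\mathscr{F}_n)>0$ for all $n$). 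Your remark that one should ``introduce the dual $\mu$-martingale $\tilde M_n$ and exploit $M_n\tilde M_n=1/P_n^2$'' is a statement of intent rather than an argument, and as you yourself note this is the technical heart of KLS; without it the backward implication is unproved. One way to close it within your framework is to apply the same localization to $\tilde M_n=\sqrt{W_n}/P_n$ with $W_n=1/Z_n$: then $\mathbb{E}_{\mu}[(\tilde M_{n\wedge\tau_\epsilon})^2]\leq\epsilon^{-2}$, so by Doob's $L^2$ maximal inequality $W_{n\wedge\tau_\epsilon}\leq\sup_k(\tilde M_{k\wedge\tau_\epsilon})^2$ is uniformly integrable under $\mu$; passing to the limit in $\nu(A\cap\{\tau_\epsilon>n\})=\mathbb{E}_{\mu}[W_{n\wedge\tau_\epsilon}1_{A\cap\{\tau_\epsilon>n\}}]$ for $A$ in the generating algebra and using a monotone class argument shows that $\nu(\,\cdot\,\cap\{\tau_\epsilon=\infty\})\abcon\mu$, and letting $\epsilon\downarrow 0$ over $\{P_\infty>0\}=\bigcup_{\epsilon>0}\{\tau_\epsilon=\infty\}$ yields $\nu\abcon\mu$. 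Until some such step is carried out, the proposal establishes only one direction of the theorem.
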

\noindent This is a special case of their general result, and it is only valid in Cantor space when $\nu,\mu$ have full support. We verify that this special case indeed follows from their more general result in \S\ref{sec:proof:hellinger}, when we first come to use it. Note that Theorem~\ref{thmshiryaev} has the following consequence:
\begin{cor}\label{corshiryaev}
Let $\preceq$ be a merging relation. If $\mathsf{MR}_2^{\nu}(\preceq, \mathscr{F}_{n+1}, H)$ has $\nu$-measure one and $\nu\preceq \mu$, then $\nu\abcon \mu$.
\end{cor}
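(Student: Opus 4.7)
The plan is to deduce the corollary directly from Theorem~\ref{thmshiryaev} by unfolding the definition of $\mathsf{MR}_2^{\nu}(\preceq, \mathscr{F}_{n+1}, H)$. First I would fix an arbitrary computable probability measure $\mu$ with full support such that $\nu \preceq \mu$; this is the class of measures quantified over in Definition~\ref{defn:randomness:mr}, so the conclusion $\nu \abcon \mu$ is understood to hold for all such $\mu$.

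Next, from the hypothesis that the set $\mathsf{MR}_2^{\nu}(\preceq, \mathscr{F}_{n+1}, H)$ has $\nu$-measure one, together with the definition of this set, I would deduce that for $\nu$-almost every $\omega$, the sum $\sum_n H^2_{\mathscr{F}_{n+1}}(\nu, \mu)(\omega)$ is finite. Since the Hellinger distance is a metric, it is symmetric in its two arguments, so the same almost-sure finiteness holds for $\sum_n H^2_{\mathscr{F}_{n+1}}(\mu, \nu)(\omega)$. This is precisely the condition appearing on the right-hand side of the biconditional in Theorem~\ref{thmshiryaev}, and applying the $\Leftarrow$ direction of that theorem yields $\nu \abcon \mu$, as desired.

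There is essentially no substantive obstacle here: the corollary is an almost immediate consequence of Theorem~\ref{thmshiryaev} once one unpacks the quantifier structure of the merging randomness definition. The only minor bookkeeping required is to match the orientation of the Hellinger distance in Definitions~\ref{defn:threedistance} and~\ref{defn:therv} (which places $\nu$ first) with the orientation in Theorem~\ref{thmshiryaev} (which places $\mu$ first); this is handled by symmetry of $H$. Note also that the argument relies on Theorem~\ref{thmshiryaev} being stated without any computability assumptions on $\mu$ or $\nu$, which is what permits us to feed in the computable $\mu$ singled out above without further qualification.
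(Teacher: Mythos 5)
Your proposal is correct and is exactly the intended argument: the paper states the corollary as an immediate consequence of Theorem~\ref{thmshiryaev} without writing out a proof, and the unfolding you give (restricting to the computable full-support $\mu$ quantified over in Definition~\ref{defn:randomness:mr}, passing to the $\nu$-measure-one set, and invoking the backward direction of Theorem~\ref{thmshiryaev}) is the only content there is. Your bookkeeping remarks about the symmetry of $H$ on $\mathscr{F}_{n+1}$ (valid here since full support makes the conditional measures mutually absolutely continuous) and the absence of computability hypotheses in Theorem~\ref{thmshiryaev} are both accurate.
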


Blackwell and Dubins  \cite{Blackwell1962-ux} showed the following fundamental result on strong merging:
\begin{thm} The set $\mathsf{MR}^{\nu}_0(\abcon, \mathsf{Borel},T)$ has $\nu$-measure one.
\end{thm}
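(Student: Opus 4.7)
The plan is to reduce the statement to the classical Blackwell--Dubins theorem applied to each fixed $\mu$ with $\nu \abcon \mu$, and then take a union of the resulting null sets over the countably many computable $\mu$ of this form. Fix such a computable $\mu$ with full support, and set $M_n(\omega) := \nu([\omega\upharpoonright n])/\mu([\omega\upharpoonright n])$ and $M_\infty := \frac{d\nu}{d\mu}$. Since $\nu \abcon \mu$, the process $M_n$ is a uniformly integrable $\mu$-martingale (a Doob martingale with $\mathbb{E}_\mu[M_\infty \mid \mathscr{F}_n] = M_n$), so by Doob's forward convergence theorem $M_n \to M_\infty$ both $\mu$-a.s.\ and in $L^1(\mu)$; moreover, $\{M_\infty > 0\}$ has $\nu$-measure one.

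The first step is to express the total variational distance at stage $n$ as a conditional $L^1$ expression. On the atom $[\omega\upharpoonright n]$, a direct calculation shows that $\nu(\cdot \mid \mathscr{F}_n)(\omega)$ is absolutely continuous with respect to $\mu(\cdot \mid \mathscr{F}_n)(\omega)$ with Radon--Nikodym derivative $M_\infty/M_n(\omega)$. Scheff\'e's identity, together with the $\mathscr{F}_n$-measurability of $M_n$, then yields
$$2\, T\bigl(\nu(\cdot\mid\mathscr{F}_n)(\omega),\,\mu(\cdot\mid\mathscr{F}_n)(\omega)\bigr) \;=\; \frac{\mathbb{E}_\mu\bigl[\,|M_\infty - M_n| \,\bigm|\, \mathscr{F}_n\,\bigr](\omega)}{M_n(\omega)}.$$

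The second and more delicate step is the pointwise claim that $\mathbb{E}_\mu[|M_\infty - M_n| \mid \mathscr{F}_n] \to 0$ $\mu$-a.s. L\'evy's theorem alone gives only the unconditional $L^1$-convergence $M_n \to M_\infty$, so one has to work a little. For a fixed $k$ and all $n \geq k$, the triangle inequality and the $\mathscr{F}_n$-measurability of $M_k$ give
$$\mathbb{E}_\mu[|M_\infty - M_n| \mid \mathscr{F}_n] \;\leq\; \mathbb{E}_\mu[|M_\infty - M_k| \mid \mathscr{F}_n] \;+\; |M_n - M_k|.$$
Letting $n \to \infty$ with $k$ held fixed, L\'evy's upward theorem shows that the first summand converges $\mu$-a.s.\ to $|M_\infty - M_k|$, and the second summand does likewise. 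Hence $\limsup_n \mathbb{E}_\mu[|M_\infty - M_n| \mid \mathscr{F}_n] \leq 2\,|M_\infty - M_k|$ on a $\mu$-full set. Intersecting countably many such sets over $k \in \mathbb{N}$ and using $M_k \to M_\infty$ $\mu$-a.s.\ collapses the right-hand side to zero.

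Combining the two displays, there is a $\nu$-null set $N_\mu$ off which $T(\nu(\cdot\mid\mathscr{F}_n)(\omega),\mu(\cdot\mid\mathscr{F}_n)(\omega)) \to 0$; this is the classical Blackwell--Dubins conclusion for the pair $(\nu,\mu)$. Since the class of computable probability measures is countable, the union $N := \bigcup_\mu N_\mu$ over the computable $\mu$ with $\nu \abcon \mu$ and full support remains $\nu$-null, and every $\omega \notin N$ lies in $\mathsf{MR}^\nu_0(\abcon,\mathsf{Borel},T)$. The main obstacle is the conditional-$L^1$ lemma above; everything else is bookkeeping and a countable union.
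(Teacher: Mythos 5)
Your proof is correct, but there is nothing in the paper to compare it against: the paper states this result as the classical Blackwell--Dubins theorem and simply cites \cite{Blackwell1962-ux}, giving no proof. What you have written is essentially the standard martingale proof of that classical result, specialized to Cantor space. The reduction $2\,T\bigl(\nu(\cdot\mid\mathscr{F}_n)(\omega),\mu(\cdot\mid\mathscr{F}_n)(\omega)\bigr)=\mathbb{E}_\mu\bigl[|M_\infty-M_n|\bigm|\mathscr{F}_n\bigr](\omega)/M_n(\omega)$ is right (Scheff\'e plus $\mathscr{F}_n$-measurability of $M_n$, with $M_n(\omega)>0$ guaranteed by full support), and you correctly identify the one genuinely delicate point: the $\mu$-a.s.\ convergence $\mathbb{E}_\mu[|M_\infty-M_n|\mid\mathscr{F}_n]\to 0$ does not follow from L\'evy's theorem alone. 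Your triangle-inequality argument with a fixed $k$, L\'evy's upward theorem, and a countable intersection is a correct proof of the needed instance of Hunt's lemma. You also use $\nu\abcon\mu$ in the two further places it is needed: to transfer the $\mu$-a.s.\ statements to $\nu$-a.s.\ ones, and to get $\nu\{M_\infty>0\}=1$ so that the denominator tends to a positive limit. Finally, the passage to a countable union of null sets is legitimate precisely because Definition~\ref{defn:randomness:mr} quantifies only over \emph{computable} $\mu$ with $\nu\abcon\mu$ and full support, of which there are countably many; this is worth flagging explicitly, since the classical Blackwell--Dubins statement concerns a single fixed pair $(\nu,\mu)$ and the uniformity over all such $\mu$ is exactly what the restriction to computable measures buys you.
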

\noindent From this and anti-monotonicity, one can deduce the following corollary about weak merging, which was studied in applied economic contexts by Kalai and Lehrer \cite{Kalai1994-ta}:
\begin{cor}
The set $\mathsf{MR}^{\nu}_0(\abcon, \mathscr{F}_{n+1},T)$ has $\nu$-measure one.
\end{cor}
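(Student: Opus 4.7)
The plan is to deduce this corollary directly from the Blackwell-Dubins theorem by invoking the anti-monotonicity of the merging randomness sets in the merging horizon, which is the general principle already recorded in the discussion right after Definition~\ref{defn:randomness:mr}. Concretely, I would take $\preceq = \preceq' = \abcon$, $\rho = \rho' = T$, $c=1$, the horizon $\mathscr{G}_n = \mathscr{F}_{n+1}$, and the comparison horizon $\mathscr{G}_n' = \mathsf{Borel}$ (the full Borel $\sigma$-algebra) for every $n \geq 0$.

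First I would observe that $\mathscr{F}_{n+1} \subseteq \mathsf{Borel}$ trivially, so the hypothesis $\mathscr{G}_n \subseteq \mathscr{G}'_n$ of anti-monotonicity is satisfied. By the data-processing inequality~(\ref{eqn:donskerconsequence}) applied pointwise to the conditional measures $\nu(\cdot \mid \mathscr{F}_n)(\omega)$ and $\mu(\cdot \mid \mathscr{F}_n)(\omega)$, this gives, for every $\omega$ and every $n$,
$$T_{\mathscr{F}_{n+1}}(\nu,\mu)(\omega) \;\leq\; T_{\mathsf{Borel}}(\nu,\mu)(\omega).$$
(For the total variational distance this step is in fact immediate from the definition as a supremum over measurable sets.) Consequently, whenever $T_{\mathsf{Borel}}(\nu,\mu)(\omega) \to 0$, also $T_{\mathscr{F}_{n+1}}(\nu,\mu)(\omega) \to 0$. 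Since this holds for every computable $\mu$ with $\nu \abcon \mu$ and full support, we obtain the containment
$$\mathsf{MR}^{\nu}_0(\abcon, \mathsf{Borel}, T) \;\subseteq\; \mathsf{MR}^{\nu}_0(\abcon, \mathscr{F}_{n+1}, T).$$

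Finally, the Blackwell-Dubins theorem, stated just above the corollary, asserts that the left-hand side has $\nu$-measure one; hence so does the right-hand side, which is the desired conclusion. There is no real obstacle in this proof: the entire content of the corollary lies in the inequality~(\ref{eqn:donskerconsequence}) for the total variational distance combined with the classical Blackwell-Dubins theorem, and the argument is simply the observation that a weak merging horizon is a coarsening of a strong merging horizon.
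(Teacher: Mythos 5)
Your proof is correct and is essentially the paper's own argument: the corollary is derived from the Blackwell--Dubins theorem via the anti-monotonicity of merging randomness in the merging horizon, using the fact that $T_{\mathscr{F}_{n+1}}(\nu,\mu)(\omega)\leq T_{\mathsf{Borel}}(\nu,\mu)(\omega)$, which for the total variational distance is immediate from its definition as a supremum. No gaps.
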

\noindent Kalai and Lehrer \cite{KL1990, KL1993a, KL1993b, Kalai1994-ta} used these results to establish that, in an infinitely repeated game, rational players eventually approximate a Nash equilibrium, as long as this
equilibrium play is absolutely continuous with respect to the players' beliefs.\footnote{For a non-exhaustive list of articles spawned by Kalai and Lehrer's work on this topic, also see \cite{Nyarko1994}, \cite{Nachbar1997}, \cite{Nyarko1998}, \cite{Sandroni1998a}, \cite{Sandroni1998b}, \cite{Nachbar2005}, and \cite{Norman2022}.}

Our goal in this article is to study weak merging with respect to computable probability measures and the merging relations from Definition~\ref{defn:twomergingpreorders}. In our results, $\mathsf{MLR}^{\nu}$ is an abbreviation for the Martin-L\"of $\nu$-random points and $\mathsf{SR}^{\nu}$ is an abbreviation for the Schnorr $\nu$-random points. Recall that a \emph{c.e. open} $U$ is an event which can be written as $\bigcup_n [\sigma_n]$ for some comptuable sequence $\sigma_n$ of binary strings. Then $\omega$ is \emph{Martin-L\"of $\nu$-random} \cite{Martin-Lof1966aa}  (respectively, \emph{Schnorr $\nu$-random} \cite{Schn:1971, Schnorr1971aa}) if and only if, for every computable sequence $U_n$ of c.e. opens in Cantor space such that $\nu(U_n)\leq 2^{-n}$ for all $n\geq 0$ (respectively, such that $\nu(U_n)\leq 2^{-n}$ and $\nu(U_n)$ is computable, uniformly  in $n$, for all $n\geq 0$), one has that $\omega$ is not in $\bigcap_n U_n$. These sequences of c.e. opens $U_n$ are called \emph{sequential Martin-L\"of $\nu$-tests} (respectively, \emph{sequential Schnorr $\nu$-tests}).  

Our main theorem is the following. Its proof can be found in \S \ref{sec:proof:main}. 
\begin{restatable}{thm}{thmmainmlrweak}\label{thm:main:mlr:weak}
 Let $\nu$ be a computable probability measure on Cantor space with full support. Then $\mathsf{MLR}^{\nu}= \mathsf{MR}^{\nu}_1(\abcon_{kl}, \mathscr{F}_{n+1},D)$ and $\mathsf{SR}^{\nu}= \mathsf{MR}^{\nu}_1(\abcon_{klc}, \mathscr{F}_{n+1},D)$.
\end{restatable}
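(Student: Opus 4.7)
The plan is to identify the partial sums $A_n(\omega) := \sum_{k<n} D_{\mathscr{F}_{k+1}}(\nu\mid\mu)(\omega)$ as the compensator in a natural Doob decomposition, and then translate the finiteness of $A_\infty(\omega)$ into the failure of a Martin-L\"of or Schnorr integral test.

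\emph{Step 1. Doob decomposition.} For any computable probability measure $\mu$ with full support, the process $L_n(\omega) := \ln\bigl(\nu(\omega\upharpoonright n)/\mu(\omega\upharpoonright n)\bigr)$ is a $\nu$-submartingale null at $0$, by Jensen's inequality applied to the convex function $x\mapsto -\ln x$ and the positive $\nu$-martingale $\mu(\cdot\upharpoonright n)/\nu(\cdot\upharpoonright n)$. Write $L_n = M_n + A_n$ for its Doob decomposition, with $M_n$ a $\nu$-martingale null at $0$ and $A_n$ predictable, increasing, null at $0$. A direct calculation of $\Delta A_n(\omega) = \mathbb{E}_\nu[\Delta L_n\mid\mathscr{F}_n](\omega)$ expands into $\sum_{i\in\{0,1\}} \frac{\nu(\omega\upharpoonright n\cdot i)}{\nu(\omega\upharpoonright n)}\ln\frac{\nu(\omega\upharpoonright n\cdot i)/\nu(\omega\upharpoonright n)}{\mu(\omega\upharpoonright n\cdot i)/\mu(\omega\upharpoonright n)}$, which is exactly $D_{\mathscr{F}_{n+1}}(\nu\mid\mu)(\omega)$. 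Hence
\[
A_n(\omega) = \sum_{k<n} D_{\mathscr{F}_{k+1}}(\nu\mid\mu)(\omega), \qquad \mathbb{E}_\nu[A_\infty] = \sup_n \mathbb{E}_\nu[L_n].
\]
Thus $\nu\abcon_{kl}\mu$ (respectively $\nu\abcon_{klc}\mu$) is equivalent to $C := \mathbb{E}_\nu[A_\infty]$ being finite (respectively finite and computable).

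\emph{Step 2. Forward containments.} Because $\mu$ and $\nu$ are computable, each $A_n$ is a computable $\mathscr{F}_n$-measurable function of $\omega$, and so $A_\infty = \sup_n A_n$ is a lower semicomputable nonnegative function with $\int A_\infty\,d\nu = C$. When $\nu\abcon_{kl}\mu$, the function $A_\infty$ is (after rescaling by any rational upper bound for $C$) a Martin-L\"of integral $\nu$-test in the sense of Levin, so $A_\infty(\omega)<\infty$ for every $\omega\in\mathsf{MLR}^\nu$. When $\nu\abcon_{klc}\mu$, the integral $C$ is additionally computable, and $A_\infty$ qualifies as a Schnorr integral $\nu$-test (cf.\ Miyabe's characterization of Schnorr randomness via lower semicomputable functions with computable integral), so $A_\infty(\omega)<\infty$ for every $\omega\in\mathsf{SR}^\nu$. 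Combined with Step~1, this yields $\mathsf{MLR}^\nu \subseteq \mathsf{MR}^\nu_1(\abcon_{kl},\mathscr{F}_{n+1},D)$ and $\mathsf{SR}^\nu \subseteq \mathsf{MR}^\nu_1(\abcon_{klc},\mathscr{F}_{n+1},D)$.

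\emph{Step 3. Converse containments (the main obstacle).} The reverse inclusions are the hard part: given $\omega\notin\mathsf{MLR}^\nu$ (respectively $\omega\notin\mathsf{SR}^\nu$), we must produce a computable $\mu$ with full support, satisfying $\nu\abcon_{kl}\mu$ (respectively $\nu\abcon_{klc}\mu$) and $A_\infty(\omega)=\infty$. Starting from a universal Martin-L\"of (respectively Schnorr) $\nu$-test $V_k = \bigcup_i [\tau_{k,i}]$ with $\nu(V_k)\leq 2^{-2k}$ and $\omega\in\bigcap_k V_k$, the plan is to construct $\mu$ as a computable convex combination $\mu = (1-\varepsilon)\nu + \varepsilon\sum_k w_k\mu_k$, where each $\mu_k$ is a computable probability measure whose one-step-ahead conditionals perturb those of $\nu$ on the finite-stage clopen approximations to $V_k$. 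The perturbation magnitudes must be carefully calibrated so that (i) $\mu$ is computable with full support and $\nu\ll\mu$; (ii) the $\nu$-expected log-likelihood $\mathbb{E}_\nu[L_n^\mu]$ stays uniformly bounded in $n$, and uniformly computably so in the Schnorr case; and (iii) every path in $\bigcap_k V_k$ accumulates infinitely many strictly positive one-step KL contributions, forcing $A_\infty(\omega)=\infty$. The key delicacy is balancing the total perturbation budget required for (ii) against the test-triggering rate required for (iii), all while preserving computability at each finite approximation stage. This construction is the global Kullback-Leibler analogue of the Vovk-style construction associated with the local Hellinger-distance result referenced in the introduction.
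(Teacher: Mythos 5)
Your Steps 1 and 2 are correct and coincide with the paper's argument: the identification of the one-step Kullback-Leibler divergences with the increments of the compensator in the Doob decomposition of $L_n=\ln\frac{\nu(\cdot\upharpoonright n)}{\mu(\cdot\upharpoonright n)}$ is exactly Theorem~\ref{prop:klandtheincrements}, and the forward containments via Levin's and Miyabe's integral-test characterizations are exactly the paper's forward direction.

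Step 3, however, is a genuine gap, and not merely because the construction is left as a plan. The specific worry is that your target mechanism is miscalibrated: a Vovk-style construction from a universal test $V_k$ is designed to make the cumulative likelihood ratio $\mu(\omega\upharpoonright n)/\nu(\omega\upharpoonright n)$ (equivalently $\sum_n H^2_{\mathscr{F}_{n+1}}$) diverge on $\bigcap_k V_k$, i.e.\ to make $L_n(\omega)\to-\infty$. But $L_n=N_n+A_n$ with $N_n$ a martingale, so $L_n(\omega)\to-\infty$ is entirely compatible with $A_\infty(\omega)<\infty$: the divergence can be absorbed by the martingale part. This is precisely the phenomenon behind Fujiwara's observation, quoted in the introduction, that Vovk's Theorem~\ref{Vovk} fails when $H^2$ is replaced by $D$. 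So establishing your property (iii) --- that the \emph{one-step} KL contributions sum to infinity along every path of $\bigcap_k V_k$ --- is not a matter of balancing a perturbation budget; it requires controlling the compensator directly, and nothing in the convex-combination ansatz $(1-\varepsilon)\nu+\varepsilon\sum_k w_k\mu_k$ does that. (Note also that $\mu\geq(1-\varepsilon)\nu$ forces $A_\infty\leq\mathbb{E}_\nu$-integrably bounded by $-\ln(1-\varepsilon)$ in expectation, so whatever divergence you achieve must be concentrated very carefully.)

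The paper sidesteps this entirely by running the construction backward from the test rather than forward from the measure. Given a Martin-L\"of (or Schnorr) $L_1(\nu)$ test $f$, Proposition~\ref{prop:frommlrtest2athing} represents $f=\sup_n A_n$ for an increasing computable dyadic $A$ with $A(\sigma 0)=A(\sigma 1)$, and Proposition~\ref{prop:iamcomplicated2} then \emph{constructs} a computable full-support $\mu$ whose log-likelihood submartingale has exactly this $A$ as its compensator, by solving at each node the pair of equations given by the Doob increment condition and measure additivity (via the effective Intermediate Value Theorem). This makes $\sum_n D_{\mathscr{F}_{n+1}}(\nu\mid\mu)(\omega)=f(\omega)$ identically, so divergence of the test value is literally divergence of the KL sum, and $\nu\abcon_{kl}\mu$ (resp.\ $\nu\abcon_{klc}\mu$) follows from $\mathbb{E}_\nu A_n=\mathbb{E}_\nu L_n$ together with the finiteness (resp.\ computability) of $\mathbb{E}_\nu f$. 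To repair your proof you would need to supply an argument of this kind; the sketch as written does not reduce to a routine verification.
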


It is natural to ask what happens when $\mathscr{F}_{n+1}$ is replaced by $\mathscr{F}_{n+\ell}$ for $\ell>1$. We know the following, but we do not know whether the stated containment for Schnorr randomness can be strengthened to an identity. The proof can be found in~\S\ref{sec:middleground}. 
\begin{restatable}{thm}{thmmainmlrweakmiddle}\label{thm:main:mlr:weakmiddle}
Let $\nu$ be a computable probability measure on Cantor space with full support. Then, for $\ell>1$, one has the following:
$\mathsf{MLR}^{\nu}= \mathsf{MR}^{\nu}_1(\abcon_{kl}, \mathscr{F}_{n+\ell},D)$ and $\mathsf{SR}^{\nu}\supseteq \mathsf{MR}^{\nu}_1(\abcon_{klc}, \mathscr{F}_{n+\ell},D)$.
\end{restatable}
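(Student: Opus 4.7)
Proof proposal: I would establish the Martin-L\"of identity by two containments and obtain the Schnorr containment as a corollary of the easier direction. For the easy direction $\mathsf{MR}^{\nu}_1(\abcon_{kl}, \mathscr{F}_{n+\ell},D) \subseteq \mathsf{MLR}^{\nu}$, monotonicity \eqref{eqn:donskerconsequence} gives $D_{\mathscr{F}_{n+1}}(\nu\mid\mu)(\omega)\le D_{\mathscr{F}_{n+\ell}}(\nu\mid\mu)(\omega)$ pointwise, because $\mathscr{F}_{n+1}\subseteq \mathscr{F}_{n+\ell}$. Hence summability at horizon $\ell$ forces summability at horizon $1$, and Theorem~\ref{thm:main:mlr:weak} places $\omega$ in $\mathsf{MLR}^\nu$. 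The identical argument, with $\abcon_{klc}$ in place of $\abcon_{kl}$, yields $\mathsf{MR}^{\nu}_1(\abcon_{klc}, \mathscr{F}_{n+\ell},D) \subseteq \mathsf{SR}^{\nu}$, which disposes of the Schnorr half of the theorem.

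For the reverse Martin-L\"of direction $\mathsf{MLR}^{\nu}\subseteq \mathsf{MR}^{\nu}_1(\abcon_{kl}, \mathscr{F}_{n+\ell},D)$, I would invoke the chain rule for Kullback-Leibler divergence. Writing $X_k(\omega):=D_{\mathscr{F}_{k+1}}(\nu\mid\mu)(\omega)$, a direct expansion of the defining sum, followed by telescoping of the logarithm and fiberwise regrouping, yields the pointwise identity
\begin{equation*}
D_{\mathscr{F}_{n+\ell}}(\nu\mid\mu)(\omega)\;=\;\sum_{j=0}^{\ell-1}\mathbb{E}_{\nu}[X_{n+j}\mid\mathscr{F}_n](\omega).
\end{equation*}
Swapping the two sums, it suffices to prove for each fixed $j<\ell$ and each computable $\mu$ with full support and $\nu\abcon_{kl}\mu$ that $\sum_n\mathbb{E}_{\nu}[X_{n+j}\mid\mathscr{F}_n](\omega)<\infty$ on $\mathsf{MLR}^\nu$. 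The expected value of this series is bounded by $\sum_n\mathbb{E}_\nu[X_n] = \sup_n\mathbb{E}_\nu[\ln\tfrac{\nu(\cdot\upharpoonright n)}{\mu(\cdot\upharpoonright n)}] < \infty$, where the equality uses that $X_n$ is the stage-$n$ increment of the predictable process in the Doob decomposition of the $\nu$-submartingale $L(\sigma)=\ln\tfrac{\nu(\sigma)}{\mu(\sigma)}$ (with $L_0=0$); finiteness then follows from $\nu\abcon_{kl}\mu$.

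The key step is to upgrade this finiteness-in-expectation to pointwise finiteness at every Martin-L\"of $\nu$-random. Each partial sum $T_N^{(j)}(\omega):=\sum_{n=0}^{N-1}\mathbb{E}_{\nu}[X_{n+j}\mid\mathscr{F}_n](\omega)$ is $\mathscr{F}_{N-1}$-measurable, nonnegative, nondecreasing in $N$, and its value on a length-$(N-1)$ cylinder is uniformly computable from the codes of $\nu$ and $\mu$. Hence the sets $V_k^{(j)}:=\{\omega:\sup_N T_N^{(j)}(\omega)>2^k\}=\bigcup_N\{\omega:T_N^{(j)}(\omega)>2^k\}$ are uniformly c.e. open in $k$. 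Monotone convergence combined with Markov's inequality gives $\nu(V_k^{(j)})\le C\cdot 2^{-k}$ for the finite constant $C:=\sum_n\mathbb{E}_\nu[X_n]$, so after shifting the index by $\lceil\log_2 C\rceil$ the sequence $(V_k^{(j)})_k$ is a sequential Martin-L\"of $\nu$-test. Every $\omega\in\mathsf{MLR}^\nu$ escapes it for some $k$, yielding $\sum_n\mathbb{E}_\nu[X_{n+j}\mid\mathscr{F}_n](\omega)\le 2^k<\infty$. Summing over the finitely many $j<\ell$ completes the proof.

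The main obstacle is precisely what forces the Schnorr statement to remain a mere containment: the sets $V_k^{(j)}$ are lower semicomputable in $\nu$-measure but need not be Schnorr tests, since $\nu(V_k^{(j)})$ can fail to be uniformly computable in $k$ even when $\nu\abcon_{klc}\mu$. Without a Schnorr-test replacement for this Markov-style bound, the present method cannot deduce the analogous Schnorr identity, and whether such a replacement exists appears to be a genuinely open problem.
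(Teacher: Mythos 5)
Your proposal is correct, and the backward (easy) direction together with the Schnorr containment is exactly the paper's argument: anti-monotonicity from $\mathscr{F}_{n+1}\subseteq\mathscr{F}_{n+\ell}$ plus Theorem~\ref{thm:main:mlr:weak}. For the forward direction $\mathsf{MLR}^{\nu}\subseteq\mathsf{MR}^{\nu}_1(\abcon_{kl},\mathscr{F}_{n+\ell},D)$ you take a genuinely different route. The paper partitions $\mathbb{N}$ into the $\ell$ residue classes mod $\ell$, runs a ``$\Delta$-ary'' Doob decomposition along each sub-filtration $\mathscr{F}_i\subseteq\mathscr{F}_{i+\ell}\subseteq\cdots$, identifies each block divergence $D_{\mathscr{F}_{n+\ell}}$ (for $n\equiv i$) with an increment of the corresponding predictable process, and telescopes to obtain $\ell$ Martin-L\"of $L_1(\nu)$ tests. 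You instead split each \emph{individual} term $D_{\mathscr{F}_{n+\ell}}$ via the Kullback-Leibler chain rule into $\ell$ lagged conditional expectations $\mathbb{E}_\nu[X_{n+j}\mid\mathscr{F}_n]$ (your identity checks out: telescope the log-likelihood over the block and regroup cylinders by their first $j+1$ bits), then for each fixed lag $j$ bound $\sum_n\mathbb{E}_\nu[X_{n+j}\mid\mathscr{F}_n]$ in $\nu$-expectation by $\sup_n\mathbb{E}_\nu L_n<\infty$ via the tower property, and convert that to pointwise finiteness on $\mathsf{MLR}^\nu$ with an explicit Markov-inequality test (equivalently, you could just observe that $g_j=\sum_n\mathbb{E}_\nu[X_{n+j}\mid\mathscr{F}_n]$ is lsc with finite expectation and cite Levin's $L_1(\nu)$-test characterization, as the paper does for its $f_i$). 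The trade-off: the paper's $\Delta$-ary machinery covers all finitely augmented horizons $\mathscr{F}_{g(n)}$, i.e.\ arbitrary $g$ with finitely many orbits, while your chain-rule argument is more elementary, dispenses with the $\Delta$-ary apparatus, and would extend to any augmented horizon with $g(n)-n$ bounded, including ones that are not finitely augmented. You also correctly identify the same obstruction to upgrading the Schnorr containment to an identity that the paper flags, namely the non-computability of the relevant supremum of expectations (in your formulation, of $\nu(V_k^{(j)})$), so leaving that as a containment is right.
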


To round out the absolute continuity notions discussed in this article, we define:

\begin{defn}\label{defn:mlrcontain}
$\nu\abcon_{\mathsf{MLR}}\mu$ if $\mathsf{MLR}^{\nu}\subseteq \mathsf{MLR}^{\mu}$.
\end{defn}

The use of algorithmic randomness to study merging of opinions from a pointwise perspective has a notable predecessor in the work of Vovk \cite{Vovk1987,Vovk2009}, which was later extended by Fujiwara  \cite{Fujiwara2008}, who generalized Vovk's result (Theorem \ref{Vovk}) to the setting of all $\alpha$-divergences with $\alpha\in(-1, 1)$. Vovk's main theorem describes when a point is Martin-L\"{o}f random relative to two computable probability measures in terms of the convergence of the squared Hellinger distance:\footnote{When $\mu$ and $\nu$ are generalized Bernoulli measures, Vovk's result further yields an effective version of Kakutani's Theorem \cite{Kakutani1948-st}. We have included full support as a hypothesis on Vovk's Theorem to align it with our standing assumption. See \cite{Vovk1987} for a discussion of how to weaken this to the requirement that $\{\sigma: \nu(\sigma)=0\}$ be computable (and similarly for $\mu$). Likewise, \cite{Vovk1987} works in spaces $X^{\mathbb{N}}$ where $X\subseteq \mathbb{N}$ is computable, but to align the result with our discussion we restrict attention to Cantor space.}

\begin{thm}[\cite{Vovk1987}]\label{Vovk}
Let $\nu$ and $\mu$ be two computable probability measures on Cantor space with full support. Suppose that $\omega$ is in $\mathsf{MLR}^{\nu}$. Then $\omega$ is in $\mathsf{MLR}^{\mu}$ if and only if ${\sum_n H^2_{\mathscr{F}_{n+1}}(\nu, \mu)(\omega)}<\infty$.
\end{thm}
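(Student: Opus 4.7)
The plan is to construct two dual computable martingales from the local Hellinger affinities and reduce the theorem to the boundedness of each. Set $\alpha(\sigma) := \sum_{i\in\{0,1\}}\sqrt{\nu(\sigma i)\mu(\sigma i)/(\nu(\sigma)\mu(\sigma))}$, so that $H^2_{\mathscr{F}_{n+1}}(\nu,\mu)(\omega) = 2(1-\alpha(\omega\upharpoonright n))$, and $P_n(\omega) := \prod_{k<n}\alpha(\omega\upharpoonright k)$. The $\alpha(\sigma)$ are uniformly computable in $\sigma$, and the standard comparison $-\ln(1-x) \asymp x$ for small $x$ gives that $\sum_n H^2_{\mathscr{F}_{n+1}}(\nu,\mu)(\omega) < \infty$ is equivalent to $P_\infty(\omega) := \lim_n P_n(\omega) > 0$. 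The short calculation $\mathbb{E}_\nu[\sqrt{\mu/\nu}(\omega\upharpoonright(n+1)) \mid \mathscr{F}_n] = \alpha(\omega\upharpoonright n)\sqrt{\mu/\nu}(\omega\upharpoonright n)$ shows that
\[
W_n(\omega) := P_n(\omega)^{-1}\sqrt{\mu(\omega\upharpoonright n)/\nu(\omega\upharpoonright n)}
\]
is a computable non-negative $\nu$-martingale with $\mathbb{E}_\nu W_n = 1$; symmetrically, $\widetilde W_n(\omega) := P_n(\omega)^{-1}\sqrt{\nu(\omega\upharpoonright n)/\mu(\omega\upharpoonright n)}$ is a computable non-negative $\mu$-martingale with $\mathbb{E}_\mu \widetilde W_n = 1$. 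Their product satisfies the key identity $W_n\widetilde W_n = P_n^{-2}$.

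For the forward direction, Doob's maximal inequality gives that $\{\omega:\sup_n W_n(\omega) > 2^k\}$ is a sequential Martin-L\"of $\nu$-test (c.e.\ open, $\nu$-mass at most $2^{-k}$), so $\sup_n W_n(\omega) < \infty$ for $\omega\in\mathsf{MLR}^\nu$, and by symmetry $\sup_n \widetilde W_n(\omega) < \infty$ for $\omega\in\mathsf{MLR}^\mu$. Hence $P_n(\omega)^{-2} = W_n(\omega)\widetilde W_n(\omega)$ is bounded on $\mathsf{MLR}^\nu\cap\mathsf{MLR}^\mu$, so $P_\infty(\omega) > 0$ and $\sum_n H^2_{\mathscr{F}_{n+1}}(\nu,\mu)(\omega) < \infty$.

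For the backward direction, suppose $\omega\in\mathsf{MLR}^\nu$ and $\sum_n H^2_{\mathscr{F}_{n+1}}(\nu,\mu)(\omega) < \infty$. Given any would-be sequential $\mu$-test $\{V_k\}$ with $V_k = \bigcup_{\tau\in S_k}[\tau]$ (for a c.e.\ antichain $S_k$) and $\mu(V_k)\leq 2^{-k}$, I would for each $M$ restrict to the c.e.\ subset $S_k^M := \{\tau\in S_k : \mu(\tau)/\nu(\tau) > 2^{-M}\}$; the corresponding c.e.\ open $V_k^M := \bigcup_{\tau\in S_k^M}[\tau]$ satisfies $\nu(V_k^M) < 2^M\mu(V_k) \leq 2^{M-k}$, so $\{V_{M+k+1}^M\}_k$ is a Martin-L\"of $\nu$-test. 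If additionally $\inf_n \mu(\omega\upharpoonright n)/\nu(\omega\upharpoonright n) > 2^{-M}$, then $\omega\in\bigcap_k V_k$ forces $\omega\in\bigcap_k V_{M+k+1}^M$, contradicting $\omega\in\mathsf{MLR}^\nu$. Since $\mu/\nu = (W_n P_n)^2$ with $P_n\to P_\infty > 0$, and since the $\nu$-martingale $\mu/\nu$ is bounded above on $\mathsf{MLR}^\nu$ (so a positive $\liminf$ upgrades to a positive $\inf$ for full-support measures), the remaining task is to show $\liminf_n W_n(\omega) > 0$.

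The main obstacle is ruling out $W_\infty(\omega) = 0$. The effective Doob martingale convergence theorem (applied to the computable non-negative $\nu$-martingale $W_n$) yields that $W_n(\omega)\to W_\infty(\omega)\geq 0$ at every $\omega\in\mathsf{MLR}^\nu$, but positivity of the limit is not automatic. My plan is to analyse the Doob decomposition of the $\nu$-supermartingale $-\ln W_n$, whose predictable compensator at step $n+1$ equals $\tfrac{1}{2}D_{\mathscr{F}_{n+1}}(\nu\mid\mu)(\omega) - |\ln\alpha(\omega\upharpoonright n)|$. The Hellinger summability directly controls $\sum|\ln\alpha|$, and because the local distributions appearing in $D_{\mathscr{F}_{n+1}}(\nu\mid\mu)$ are binary, a Taylor expansion in the regime $\alpha\to 1$ yields the pointwise comparison $D\asymp H^2$; combined with an effective test argument to absorb the finitely many early terms, this should give that $-\ln W_n(\omega)$ remains bounded above at $\omega\in\mathsf{MLR}^\nu$, and hence $W_\infty(\omega) > 0$, closing the argument.
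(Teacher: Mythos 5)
First, a framing note: the paper does not prove this theorem itself \textemdash\ it is quoted from Vovk's 1987 paper \textemdash\ so the comparison is against the known argument rather than a proof in the text. Your forward direction is correct and is essentially the standard one: $W_n$ and $\widetilde W_n$ are indeed computable non-negative unit-expectation martingales for $\nu$ and $\mu$ respectively, the maximal inequality makes $\{\omega:\sup_n W_n(\omega)>2^k\}$ a sequential Martin-L\"of $\nu$-test, and the identity $W_n\widetilde W_n=P_n^{-2}$ then bounds $P_n$ away from $0$ on $\mathsf{MLR}^\nu\cap\mathsf{MLR}^\mu$. The test-thinning reduction in the backward direction (restricting a $\mu$-test to strings with $\mu(\tau)/\nu(\tau)>2^{-M}$) is also fine and correctly isolates the remaining task: show $\liminf_n W_n(\omega)>0$, equivalently $\inf_n \mu(\omega\upharpoonright n)/\nu(\omega\upharpoonright n)>0$, using only $\omega\in\mathsf{MLR}^\nu$ and $P_\infty(\omega)>0$.

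The plan you sketch for that remaining step does not work, because the comparison it rests on is false. For binary distributions there is no pointwise bound $D\lesssim H^2$, even in the regime where the affinity tends to $1$: taking $p=\varepsilon$ and $q=\varepsilon^k$, one computes $H^2=(\sqrt p-\sqrt q)^2+(\sqrt{1-p}-\sqrt{1-q})^2\approx\varepsilon$ while $D=p\ln(p/q)+(1-p)\ln\frac{1-p}{1-q}\approx (k-1)\,\varepsilon\ln(1/\varepsilon)$, so $D/H^2$ is unbounded; only $H^2\le D$ holds. Hence $\sum_n H^2_{\mathscr{F}_{n+1}}(\nu,\mu)(\omega)<\infty$ does not imply $\sum_n D_{\mathscr{F}_{n+1}}(\nu\mid\mu)(\omega)<\infty$ \textemdash\ and it \emph{cannot}, since the paper records (following Fujiwara) that the theorem becomes false with $D$ in place of $H^2$; given that $\sum D<\infty\Rightarrow\sum H^2<\infty\Rightarrow\omega\in\mathsf{MLR}^\mu$ is the easy direction, the failure must be exactly that there are $\omega\in\mathsf{MLR}^\nu\cap\mathsf{MLR}^\mu$ with $\sum_n D_{\mathscr{F}_{n+1}}(\nu\mid\mu)(\omega)=\infty$. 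So bounding $-\ln W_n$ via its compensator (which, incidentally, is a submartingale, not a supermartingale, since $-\ln$ is convex) summed against the KL divergence is doomed; and the auxiliary martingale part $N_n$ of that decomposition is signed, so its boundedness along $\omega$ would not follow from $\omega\in\mathsf{MLR}^\nu$ either. Ruling out $W_\infty(\omega)=0$ is the genuine content of the backward direction and needs a different device; one route (close to Vovk's) is to exploit the computable semimeasure $\rho(\tau)=P(\tau)\sqrt{\mu(\tau)\nu(\tau)}$, for which $W=\mu/\rho$ and $\widetilde W=\nu/\rho$, to convert a universal $\mu$-test into a $\nu$-test on the region where $P_n$ stays above a fixed rational $\delta$, rather than any KL estimate.
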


\noindent Crucially, as noted by Fujiwara, Theorem \ref{Vovk} does not hold when one replaces the squared Hellinger distance with the Kullback-Leibler divergence. But this is not in tension with our Theorem~\ref{thm:main:mlr:weak}, since Vovk's Theorem~\ref{Vovk} is a local result about specific pairs of sequences, whereas Theorem~\ref{thm:main:mlr:weak} is a global result characterizing the class of Martin-L\"{o}f $\nu$-random sequences in terms of merging.

However, we can make both directions of Vovk's Theorem~\ref{Vovk}  interact with global notions. We first note the following, which was observed by Hoyrup and Rojas \cite[\S{2}]{hoyrup2011absolute}:

\begin{restatable}{lem}{}\label{lem:comptoMLR}
    Let $\nu$ and $\mu$ be two computable probability measures on Cantor space with full support. Then $\nu \abcon_{comp}\mu$ implies $\nu\abcon_{\mathsf{MLR}}\mu$.
\end{restatable}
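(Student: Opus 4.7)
The plan is to argue the contrapositive: assuming $\omega \notin \mathsf{MLR}^{\mu}$, I will build a sequential Martin-L\"of $\nu$-test that captures $\omega$. So fix $\omega \notin \mathsf{MLR}^{\mu}$ and let $U_n$ be a sequential Martin-L\"of $\mu$-test with $\mu(U_n) \leq 2^{-n}$ and $\omega \in \bigcap_n U_n$. The idea is that the computable modulus $m$ witnessing $\nu \abcon_{comp} \mu$ lets us thin out the indexing of the test by a computable amount to convert $\mu$-smallness into $\nu$-smallness.

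Concretely, using that $m:\mathbb{Q}^{>0}\to \mathbb{Q}^{>0}$ is computable, I will define a computable function $k:\mathbb{N}\to \mathbb{N}$ by searching, given $n$, for the least $k$ with $2^{-k} < m(2^{-n})$; such a $k$ exists because $m(2^{-n}) > 0$ and it is found effectively since we can compute $m(2^{-n})$ as a positive rational. Now set $V_n := U_{k(n)}$. Then $V_n$ is uniformly c.e. open in $n$, since $U_n$ is uniformly c.e. open and $k$ is computable.

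It remains to check the $\nu$-test bound and that $\omega$ is captured. By construction $\mu(V_n) = \mu(U_{k(n)}) \leq 2^{-k(n)} < m(2^{-n})$, so the defining property of $\nu \abcon_{comp}\mu$ applied with $\epsilon = 2^{-n}$ and $A = V_n$ gives $\nu(V_n) < 2^{-n}$. Hence $V_n$ is a sequential Martin-L\"of $\nu$-test. Since $\omega \in U_m$ for every $m$, in particular $\omega \in U_{k(n)} = V_n$ for every $n$, so $\omega \in \bigcap_n V_n$ and therefore $\omega \notin \mathsf{MLR}^{\nu}$. This establishes $\mathsf{MLR}^{\nu} \subseteq \mathsf{MLR}^{\mu}$, i.e., $\nu \abcon_{\mathsf{MLR}}\mu$.

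There is no real obstacle here; the only subtle point is the strict-versus-weak inequality in the definition of $\abcon_{comp}$, which is precisely why I choose $k(n)$ to make $2^{-k(n)}$ strictly below $m(2^{-n})$ rather than merely equal.
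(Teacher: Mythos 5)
Your proof is correct and is essentially the same argument as the paper's: the paper likewise takes a computable $\ell$ with $2^{-\ell(n)} < m(2^{-n})$ and observes that $U_{\ell(n)}$ is then a sequential Martin-L\"of $\nu$-test. You have merely spelled out the construction of $\ell$ (your $k$) and the contrapositive framing explicitly.
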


\begin{proof}
Let $m:\mathbb{Q}^{>0}\rightarrow \mathbb{Q}^{>0}$ be the computable function from the definition of $\nu\abcon_{comp}\mu$ and let $\ell:\mathbb{N}\rightarrow \mathbb{N}$ be a computable function such that $2^{-\ell(n)}<m(2^{-n})$ for all $n\geq 0$. Then any sequential Martin-L\"of $\mu$-test $U_n$ is such that $U_{\ell(n)}$ is a sequential Martin-L\"of $\nu$-test. 
\end{proof}

Using the forward direction of Vovk's Theorem~\ref{Vovk}, one can prove the following: 

\begin{restatable}{thm}{thmmlrtotalvar}\label{thm:main:totalvar}
 Let $\nu$ be a computable probability measure on Cantor space with full support. Then $\mathsf{MLR}^{\nu}\subseteq \mathsf{MR}^{\nu}_2(\abcon_{comp}, \mathscr{F}_{n+1},H)$.
\end{restatable}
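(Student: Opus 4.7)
The plan is to combine the forward direction of Vovk's Theorem~\ref{Vovk} with Lemma~\ref{lem:comptoMLR} in a direct way; no new machinery seems required.

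Fix a computable probability measure $\nu$ with full support and let $\omega\in\mathsf{MLR}^{\nu}$. To show $\omega\in\mathsf{MR}^{\nu}_2(\abcon_{comp},\mathscr{F}_{n+1},H)$, let $\mu$ be any computable probability measure with full support such that $\nu\abcon_{comp}\mu$. I want to conclude $\sum_n H^2_{\mathscr{F}_{n+1}}(\nu,\mu)(\omega)<\infty$.

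First I would invoke Lemma~\ref{lem:comptoMLR}, which gives $\nu\abcon_{\mathsf{MLR}}\mu$, i.e., $\mathsf{MLR}^{\nu}\subseteq\mathsf{MLR}^{\mu}$. In particular, the fixed $\omega$ lies in $\mathsf{MLR}^{\mu}$ as well. Now both $\nu$ and $\mu$ are computable with full support, and $\omega$ is simultaneously Martin-L\"of random for both, so the forward direction of Vovk's Theorem~\ref{Vovk} applies and yields
\begin{equation*}
\sum_n H^2_{\mathscr{F}_{n+1}}(\nu,\mu)(\omega)<\infty,
\end{equation*}
which is precisely the defining condition for $\omega\in\mathsf{MR}^{\nu}_2(\abcon_{comp},\mathscr{F}_{n+1},H)$. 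Since $\mu$ was arbitrary subject to $\nu\abcon_{comp}\mu$, this completes the containment.

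The only substantive step is the application of Lemma~\ref{lem:comptoMLR} to convert the effective $\epsilon$-$\delta$ form of absolute continuity into the containment of Martin-L\"of random sets, and that lemma has already been established in the excerpt. No obstacle remains; the result is essentially a two-line corollary of Vovk's Theorem once the merging relation $\abcon_{comp}$ has been shown to sit below $\abcon_{\mathsf{MLR}}$ in the diagram~\eqref{eqn:abcondiagram}.
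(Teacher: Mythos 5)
Your proposal is correct and follows exactly the paper's own argument: apply Lemma~\ref{lem:comptoMLR} to get $\omega\in\mathsf{MLR}^{\mu}$ and then invoke the forward direction of Vovk's Theorem~\ref{Vovk} to obtain $\sum_n H^2_{\mathscr{F}_{n+1}}(\nu,\mu)(\omega)<\infty$. No gaps.
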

\begin{proof}
Suppose that $\omega$ is in  $\mathsf{MLR}^{\nu}$ and that $\mu$ is a computable probability measure with full support such that $\nu\abcon_{comp}\mu$. By Lemma \ref{lem:comptoMLR}, we have that $\omega$ is in both $\mathsf{MLR}^{\nu}$ and $\mathsf{MLR}^{\mu}$. Hence, by the forward direction of Vovk's Theorem~\ref{Vovk}, one has that ${\sum_n H^2_{\mathscr{F}_{n+1}}(\nu, \mu)(\omega)}<\infty$.
\end{proof}
\noindent Further, the backward direction of Vovk's Theorem~\ref{Vovk} and our Theorem \ref{thm:main:mlr:weak} show the following about the notion $\abcon_{kl}$:
\begin{prop}\label{prop:corofvovk}
If $\nu\abcon_{kl}\mu$, then  $\nu \abcon_\mathsf{MLR} \mu$.
\end{prop}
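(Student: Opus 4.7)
The plan is to chain together the two results that are already in hand: the characterization of Martin-L\"of $\nu$-randomness via Kullback-Leibler weak merging (Theorem~\ref{thm:main:mlr:weak}) and the backward direction of Vovk's Theorem~\ref{Vovk}. The bridge between them is the basic inequality $H^2\leq D$ from~(\ref{eqn:theinequalities}), applied pointwise at each stage $n$.

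Concretely, suppose $\nu\abcon_{kl}\mu$ and fix $\omega\in \mathsf{MLR}^{\nu}$. First, by the forward containment in Theorem~\ref{thm:main:mlr:weak}, we have $\omega\in \mathsf{MR}^{\nu}_1(\abcon_{kl},\mathscr{F}_{n+1},D)$. Unpacking the definition with the witness $\mu$ (which is admissible because $\nu\abcon_{kl}\mu$), this gives
\begin{equation*}
\sum_n D_{\mathscr{F}_{n+1}}(\nu\mid \mu)(\omega)<\infty.
\end{equation*}
Next, applying $H^2\leq D$ from~(\ref{eqn:theinequalities}) to each of the conditional measures $\nu(\cdot\mid \mathscr{F}_n)(\omega)\upharpoonright \mathscr{F}_{n+1}$ and $\mu(\cdot\mid \mathscr{F}_n)(\omega)\upharpoonright \mathscr{F}_{n+1}$ that enter into the definition of the random variables in Definition~\ref{defn:therv}, we conclude
\begin{equation*}
\sum_n H^2_{\mathscr{F}_{n+1}}(\nu,\mu)(\omega)\leq \sum_n D_{\mathscr{F}_{n+1}}(\nu\mid \mu)(\omega)<\infty.
\end{equation*}
Finally, since $\omega\in \mathsf{MLR}^{\nu}$ by assumption, the backward direction of Vovk's Theorem~\ref{Vovk} (applied to the computable measures $\nu,\mu$ with full support) delivers $\omega\in \mathsf{MLR}^{\mu}$. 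As $\omega\in \mathsf{MLR}^{\nu}$ was arbitrary, this is exactly $\mathsf{MLR}^{\nu}\subseteq \mathsf{MLR}^{\mu}$, i.e., $\nu\abcon_{\mathsf{MLR}}\mu$.

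There is no real obstacle here; the statement is a clean corollary. The only subtle point to double-check while writing up is the admissibility of $\mu$ in the quantifier defining $\mathsf{MR}^{\nu}_1(\abcon_{kl},\mathscr{F}_{n+1},D)$: the definition requires $\mu$ to be a computable probability measure with full support satisfying $\nu\abcon_{kl}\mu$, which are exactly the standing hypotheses on $\mu$ in the statement of Proposition~\ref{prop:corofvovk}.
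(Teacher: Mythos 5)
Your proposal is correct and follows exactly the same route as the paper's own proof: apply the forward containment of Theorem~\ref{thm:main:mlr:weak} to get summable Kullback-Leibler divergence, pass to summable squared Hellinger distance via $H^2\leq D$, and invoke the backward direction of Vovk's Theorem~\ref{Vovk}. Your remark on the admissibility of $\mu$ in the quantifier is a fair point of care, and it is indeed satisfied by the hypotheses.
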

\begin{proof}
Suppose that $\nu\abcon_{kl}\mu$ and $\omega$ is in $\mathsf{MLR}^{\nu}$. We show that $\omega$ in $\mathsf{MLR}^{\mu}$. By Theorem \ref{thm:main:mlr:weak}, we have that $\omega$ is in ${\mathsf{MR}^{\nu}_1(\abcon_{kl}, \mathscr{F}_{n+1},D)}$. Since $\nu\abcon_{kl}\mu$, we have ${\sum_n D_{\mathscr{F}_{n+1}}(\nu\mid\mu)(\omega)}<\infty$. Then, given that $H^2\leq D$, we have ${\sum_n H^2_{\mathscr{F}_{n+1}}(\nu, \mu)(\omega)}\leq {\sum_n D_{\mathscr{F}_{n+1}}(\nu\mid \mu)(\omega)}<\infty$.
Hence, by the backward direction of Vovk's Theorem \ref{Vovk}, we have that $\omega$ is in $\mathsf{MLR}^{\mu}$.
\end{proof}
\noindent Finally, the forward direction of Vovk's Theorem~\ref{Vovk} gives the forward containment in the following:
\begin{restatable}{cor}{corvovkish}\label{corvovkish}
$\mathsf{MLR}^{\nu}=\mathsf{MR}^{\nu}_2(\abcon_\mathsf{MLR}, \mathscr{F}_{n+1},H)$.  \end{restatable}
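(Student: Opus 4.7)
The forward containment $\mathsf{MLR}^\nu \subseteq \mathsf{MR}^\nu_2(\abcon_\mathsf{MLR}, \mathscr{F}_{n+1}, H)$ is a direct application of Vovk's Theorem~\ref{Vovk}, as flagged in the paragraph preceding the statement. Fix $\omega \in \mathsf{MLR}^\nu$ and any computable probability measure $\mu$ with full support such that $\nu \abcon_\mathsf{MLR} \mu$. By the very definition of $\abcon_\mathsf{MLR}$, $\omega \in \mathsf{MLR}^\mu$. Since $\omega$ belongs to both $\mathsf{MLR}^\nu$ and $\mathsf{MLR}^\mu$, the forward direction of Vovk's Theorem yields $\sum_n H^2_{\mathscr{F}_{n+1}}(\nu,\mu)(\omega) < \infty$. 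As $\mu$ was arbitrary, $\omega \in \mathsf{MR}^\nu_2(\abcon_\mathsf{MLR}, \mathscr{F}_{n+1}, H)$.

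For the reverse containment, I would argue by contrapositive: assume $\omega \notin \mathsf{MLR}^\nu$ and produce a computable probability measure $\mu$ with full support such that $\nu \abcon_\mathsf{MLR} \mu$ and $\sum_n H^2_{\mathscr{F}_{n+1}}(\nu,\mu)(\omega) = \infty$. The natural starting point is Theorem~\ref{thm:main:mlr:weak}, which yields a computable $\mu$ with $\nu \abcon_{kl} \mu$, and hence $\nu \abcon_\mathsf{MLR} \mu$ by Proposition~\ref{prop:corofvovk}, for which the Kullback-Leibler sum $\sum_n D_{\mathscr{F}_{n+1}}(\nu \mid \mu)(\omega)$ diverges. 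Eligibility of $\mu$ in the defining quantifier of $\mathsf{MR}^\nu_2(\abcon_\mathsf{MLR}, \mathscr{F}_{n+1}, H)$ is therefore automatic; the task reduces to showing that the squared Hellinger sum diverges as well. Since the standing inequality runs $H^2 \leq D$ rather than the reverse, divergence of the KL sum does not immediately yield divergence of the Hellinger sum, and bridging this gap is the heart of the argument.

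The plan for bridging the gap is to inspect the construction of $\mu$ in the proof of Theorem~\ref{thm:main:mlr:weak}, where $\mu$ is built from the predictable process of the $\nu$-submartingale $L(\sigma) = -\ln \frac{\mu(\sigma)}{\nu(\sigma)}$, and either truncate it or pass to a convex mixture so that along $\omega$ the conditional likelihood ratios between $\nu$ and $\mu$ are uniformly bounded. For Bernoulli conditional distributions with ratios bounded by a constant $C$, the quantities $D_n$ and $H^2_n$ are comparable up to a multiplicative constant depending only on $C$ (via a Taylor-type expansion of $x \ln x$ against $(\sqrt{x}-1)^2$ on bounded intervals), so divergence of $\sum D_n$ transfers to divergence of $\sum H^2_n$. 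The eligibility condition $\nu \abcon_\mathsf{MLR} \mu$ is preserved by mixing steps of the form $\mu \mapsto \tfrac12 \nu + \tfrac12 \mu$, since $\mu \geq \tfrac12 \nu$ gives $\nu \abcon_{comp} \mu$ and hence $\nu \abcon_\mathsf{MLR} \mu$ by Lemma~\ref{lem:comptoMLR}. The main obstacle is performing the modification without destroying divergence of the KL sum itself\textemdash mixing with $\nu$ contracts all information distances, so the truncation/mixing must be calibrated (e.g.\ by taking a suitably small mixing weight, or by modifying only on strings where the conditional ratio of the original $\mu$ is already moderate) to ensure the adjusted witness still records the non-randomness of $\omega$ in a quantitatively strong enough form.
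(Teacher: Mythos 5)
Your forward containment is correct and is exactly the paper's argument. The backward containment, however, is only a plan with an unresolved step at its center, and the step is genuinely problematic. The comparability $D\leq c(C)\,H^{2}$ for conditional distributions whose likelihood ratios are bounded by $C$ is true, but the measure $\mu$ produced by Theorem~\ref{thm:main:mlr:weak} carries no such bound along $\omega$: that construction only controls the increments $A(\sigma\iota)-A(\sigma)$ of the predictable process, i.e.\ the per-step Kullback--Leibler divergence, and a small per-step KL divergence does not force a bounded conditional ratio when $\nu$'s own conditionals are not bounded away from $0$. For instance, $\nu(\sigma\iota\mid\sigma)=\delta$ and $\mu(\sigma\iota\mid\sigma)=\delta^{2}$ give a per-step $D\approx\delta\ln(1/\delta)$ but $H^{2}\approx\delta$, so $H^{2}/D\to 0$ while the ratio $1/\delta$ blows up. Your proposed repairs do not close this: mixing with $\nu$ restores eligibility but contracts the Hellinger increments along with the KL increments, and ``modifying only where the ratio is moderate'' presupposes that the divergent part of the KL sum accumulates on such steps, which nothing guarantees. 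You flag this obstacle yourself, and as written the reduction to Theorem~\ref{thm:main:mlr:weak} does not go through.

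The paper sidesteps the bridge entirely by arguing directly rather than by contrapositive. Given $\omega\in\mathsf{MR}^{\nu}_2(\abcon_\mathsf{MLR},\mathscr{F}_{n+1},H)$ and an arbitrary Martin-L\"of $L_1(\nu)$ test $f$, Proposition~\ref{prop:easyconstruction} builds a computable $\mu$ with $f(\omega')=\sum_n H^{2}_{\mathscr{F}_{n+1}}(\mu,\nu)(\omega')$ for \emph{all} $\omega'$. This is a Hellinger analogue of the pair Proposition~\ref{prop:frommlrtest2athing}/Proposition~\ref{prop:iamcomplicated2}: one takes the predictable-process representation $f=\sup_n A_n$ with increments below $\epsilon:=2(1-\sqrt{\nicefrac{1}{2}})$ and solves
\begin{equation*}
A(\sigma\iota)-A(\sigma)=2\Bigl(1-\sum_{\jmath\in\{0,1\}}\sqrt{\mu(\sigma\jmath\mid\sigma)\,\nu(\sigma\jmath\mid\sigma)}\Bigr)
\end{equation*}
recursively via the effective Intermediate Value Theorem, the choice of $\epsilon$ guaranteeing solvability. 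Eligibility $\nu\abcon_\mathsf{MLR}\mu$ then comes from the backward direction of Vovk's Theorem~\ref{Vovk}: every $\omega'\in\mathsf{MLR}^{\nu}$ satisfies $f(\omega')<\infty$, hence $\sum_n H^{2}_{\mathscr{F}_{n+1}}(\mu,\nu)(\omega')<\infty$, hence $\omega'\in\mathsf{MLR}^{\mu}$. Applying the merging hypothesis to this $\mu$ yields $f(\omega)<\infty$ directly. If you want to keep your outline, the honest fix is to prove a Hellinger version of Proposition~\ref{prop:iamcomplicated2} rather than to post-process the KL witness.
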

\noindent The backward containment is slightly more involved and is established at the close of \S\ref{sec:proof:hellinger}. Unlike Theorem \ref{thm:main:mlr:weak}, Martin-L\"of randomness plays into both sides of the equation in Corollary \ref{corvovkish}. The result can thus be thought of as establishing a kind of consistency between the set $\mathsf{MLR}^{\nu}$ and the relation $\abcon_\mathsf{MLR}$.

Finally, we turn to the case of having the exponent $p=0$, whose treatment requires an auxiliary notion. We say that $\omega$ is $\nu$-\emph{mild} if $\liminf_n \nu([\omega\upharpoonright (n+1)]\mid [\omega\upharpoonright n])>0$, and we abbreviate this as $\mathsf{Mild}^{\nu}$. Note that this corresponds to the notion of being conditionally bounded away from zero in \cite[Definition 5.2.3]{Li1997aa}. 

With this notion in hand, we have the following result:
\begin{restatable}{thm}{thmcrzero}\label{thm:crzero}
$\mathsf{Mild}^{\nu}\cap \mathsf{CR}^{\nu}\subseteq \mathsf{MR}_0^{\nu}(\abcon_{bdc}, \mathscr{F}_{n+1}, T)$.
\end{restatable}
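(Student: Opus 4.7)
The plan is to reduce the claim to showing $r_{n+1}(\omega)/r_n(\omega) \to 1$, where $r_n(\omega) := \nu([\omega\upharpoonright n])/\mu([\omega\upharpoonright n])$. Writing $p_n$ and $q_n$ for the one-step conditional $\nu$- and $\mu$-probabilities of the next observed bit along $\omega$, the restrictions $\nu(\cdot\mid\mathscr{F}_n)(\omega)\upharpoonright\mathscr{F}_{n+1}$ and $\mu(\cdot\mid\mathscr{F}_n)(\omega)\upharpoonright\mathscr{F}_{n+1}$ are supported on the two atoms $[\omega\upharpoonright n\cdot 0]$ and $[\omega\upharpoonright n\cdot 1]$, so $T_{\mathscr{F}_{n+1}}(\nu,\mu)(\omega) = |p_n - q_n|$. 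The telescoping identity $r_{n+1}/r_n = p_n/q_n$ gives $|p_n-q_n| = q_n|r_{n+1}/r_n - 1|$, and since $q_n\leq 1$, it suffices to prove $r_{n+1}/r_n \to 1$.

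Two ingredients combine to yield this. First, $s_n(\omega) := 1/r_n(\omega) = \mu([\omega\upharpoonright n])/\nu([\omega\upharpoonright n])$ is a computable non-negative $\nu$-martingale with $s_0 = 1$; by the effective martingale convergence theorem at $\mathsf{CR}^\nu$-points (a standard consequence of Doob's upcrossing inequality together with the Ville-style characterization of computable randomness), $s_n(\omega)$ converges to some $s_\infty(\omega)\in[0,\infty)$. Second, to rule out $s_\infty(\omega) = 0$, the hypothesis $\nu\abcon_{bdc}\mu$ is used via the majorizing $\nu$-martingale
\[
L_n(\omega) := \mathbb{E}_\nu\bigl[\sup_k r_k \mid \mathscr{F}_n\bigr](\omega) = \frac{1}{\nu([\omega\upharpoonright n])}\int_{[\omega\upharpoonright n]}\sup_k r_k\,d\nu,
\]
with $L_0 = M := \mathbb{E}_\nu\sup_k r_k$, which is finite and computable by hypothesis. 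Since $r_n$ is $\mathscr{F}_n$-measurable and $\sup_k r_k\geq r_n$, the canonical version of the conditional expectation (as in (1.1)) satisfies $L_n(\omega)\geq r_n(\omega)$ pointwise. Applying $\omega\in\mathsf{CR}^\nu$ to $L_n/M$ yields $\sup_n L_n(\omega) < \infty$, hence $\sup_n r_n(\omega) < \infty$; this forces $\inf_n s_n(\omega) > 0$ and so $s_\infty(\omega) > 0$. Combining with the convergence of $s_n$, $r_n(\omega)\to 1/s_\infty(\omega)\in(0,\infty)$, and therefore $r_{n+1}/r_n\to 1$, as needed. Mildness is not strictly invoked by this reduction, but it streamlines the final rearrangement: from $p_n\geq c$ for $n$ large and $q_n/p_n = r_n/r_{n+1}\to 1$, one reads off $|p_n-q_n| = p_n|1-q_n/p_n|\to 0$ with uniform lower bounds on both $p_n$ and $q_n$.

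The hard part will be verifying the uniform computability of $L_n$, which reduces to showing that $\int_{[\sigma]}\sup_k r_k\,d\nu$ is a computable real uniformly in $\sigma$. It is lower semi-computable because $\sup_k r_k$ is lower semi-computable and $\nu$ is computable; it is upper semi-computable via the identity $\int_{[\sigma]}\sup_k r_k\,d\nu = M - \int_{[\sigma]^c}\sup_k r_k\,d\nu$, where $M$ is computable (this is precisely where the ``c'' in $\nu\abcon_{bdc}\mu$ is used, as mere finiteness of $M$ would not suffice) and the second term is lower semi-computable since $[\sigma]^c$ is a finite disjoint union of cylinders. A secondary technical point is to invoke the correct formulation of effective martingale convergence at $\mathsf{CR}^\nu$-points, where the upcrossing-based gambling construction must be verified to yield a genuine computable $\nu$-martingale.
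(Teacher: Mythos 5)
Your proposal is correct, and for the key step it takes a genuinely different route from the paper. Both arguments reduce to showing that the likelihood-ratio martingale $M(\sigma)=\mu(\sigma)/\nu(\sigma)$ converges along $\omega$ to a finite, nonzero limit, and both get existence of the limit from the upcrossing characterization of $\mathsf{CR}^{\nu}$. The difference is in how the limit is shown to be nonzero. The paper first proves that $\nu\abcon_{bdc}\mu$ implies $\mathsf{CR}^{\nu}\subseteq\mathsf{CR}^{\mu}$ (Proposition~\ref{prop:mixcomp}, via a bounded sequential Martin-L\"of test built from $f=\sup_k \nu(\cdot\upharpoonright k)/\mu(\cdot\upharpoonright k)$) and then invokes the computable-randomness analogue of Vovk's theorem (Proposition~\ref{crabcon}) to conclude $\lim_n M_n(\omega)\neq 0$. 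You instead bypass $\mathsf{CR}^{\mu}$ entirely: you dominate $r_n=1/M_n$ pointwise by the computable non-negative $\nu$-martingale $L_n=\mathbb{E}_{\nu}[\sup_k r_k\mid\mathscr{F}_n]$ (an instance of the paper's Example~\ref{ex:martingale:2}) and apply the martingale definition of $\mathsf{CR}^{\nu}$ to get $\sup_n r_n(\omega)<\infty$ directly. Both routes hinge on the same computability fact\textemdash that $\int_{[\sigma]}\sup_k r_k\,d\nu$ is uniformly computable, which is exactly where the ``c'' in $\abcon_{bdc}$ is used, and your lsc/upper-sc argument for this is the standard one the paper also relies on. Your route is more elementary and self-contained; the paper's route factors through Proposition~\ref{prop:mixcomp}, which records the independently useful containment $\mathsf{CR}^{\nu}\subseteq\mathsf{CR}^{\mu}$. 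Your side remark that mildness is not actually needed is also right: the paper's Proposition~\ref{prop:equivalentliminf} only invokes mildness for the direction from $|p_n-q_n|\to 0$ to $M_{n+1}/M_n\to 1$, whereas the direction used here (and by you) multiplies by a conditional probability bounded above by $1$ and needs no lower bound.
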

\noindent In this, $\mathsf{CR}^{\nu}$ denotes the collection of computably $\nu$-random sequences, where \emph{computable randomness} is another central algorithmic randomness notion. It is defined in terms of the convergence of non-negative computable martingales, and we review this and other characterisations of $\mathsf{CR}^{\nu}$ in \S\ref{sec:cr}.

A natural follow-up question is whether Theorem~\ref{thm:crzero} can be reversed. Here we note the following:
\begin{rmk}\label{rmk:below}
Let $\nu$ be the uniform measure. If $\nu\abcon_{bdc}\mu$ implies $\nu\abcon_{klc}\mu$ for all $\mu$ (cf. Diagram~\ref{eqn:abcondiagram}, Proposition~\ref{prop:bndimplieskl}), then Theorem~\ref{thm:crzero} cannot be reversed. For, this hypothesis, anti-monotonicity, and Theorem~\ref{thm:main:mlr:weak} would imply that $$\mathsf{SR}^{\nu}=\mathsf{MR}_1(\abcon_{klc}, \mathscr{F}_{n+1}, D)\subseteq \mathsf{MR}_2(\abcon_{bdc}, \mathscr{F}_{n+1}, T)\subseteq \mathsf{MR}_0(\abcon_{bdc}, \mathscr{F}_{n+1}, T)\subseteq \mathsf{CR}^{\nu}$$ But this contradicts the fact that $\mathsf{CR}^{\nu}$ is a proper subset of $\mathsf{SR}^{\nu}$ for the uniform measure.  
\end{rmk}

The case of $p=0$ is closely related to one of the main concerns of Solomonoff's theory of induction, which was initiated in \cite{Solomonoff1964}. Solomonoff \cite{Solomonoff1978-ou} subsequently showed that if $\mu$ is a c.e. universal semi-measure\footnote{A \emph{c.e. semi-measure} $\mu$ is given by a sequence of uniformly left-c.e. reals $\mu(\sigma)>0$, as $\sigma$ ranges over $2^{<\mathbb{N}}$, such that $\mu(\emptyset)=1$ and $\mu(\sigma 0)+\mu(\sigma 1)\leq \mu(\sigma)$ for all $\sigma^{<\mathbb{N}}$. A c.e. semi-measure is \emph{universal} if, for every c.e. semi-measure (\emph{a fortiori}, for every computable probability measure) $\nu$, there is a constant $C>0$ such that $C\cdot \mu(\sigma)\geq \nu(\sigma)$ for all $\sigma$ in  $2^{<\mathbb{N}}$.} and $\nu$ is a computable probability measure, then for $\nu$-a.s. many $\omega$ in Cantor space, one has the following:
\begin{equation}\label{eqn:intro:solomonoffy}
\left|\mu(\omega\upharpoonright (n+1)\mid \omega\upharpoonright n)-\nu(\omega\upharpoonright (n+1)\mid \omega\upharpoonright n)\right|\rightarrow 0
\end{equation}
It is known that not every $\mathsf{MLR}^{\nu}$ is in this $\nu$-measure-one set, but no precise identification of this measure-one set in terms of algorithmic randomness has been discovered yet (see \cite[Theorem 6]{Hutter2004-iq} and \cite[375]{Li1997aa}). On the other hand, by Theorem~\ref{thm:crzero}, every element which is in both $\mathsf{MLR}^{\nu}$ and $\mathsf{Mild}^{\nu}$ satisfies the condition $\mathsf{MR}_0^{\nu}(\abcon_{bdc}, \mathscr{F}_{n+1}, T)$. And, as we note in \S\ref{sec:totalvariational}, for a sequence $\omega$ in $\mathsf{Mild}^{\nu}$, the condition of being in $\mathsf{MR}_0^{\nu}(\abcon_{bdc}, \mathscr{F}_{n+1}, T)$ is equivalent to (\ref{eqn:intro:solomonoffy}) happening for all computable probability measures $\mu$ with $\nu\abcon_{bdc} \mu$. In addition to being positioned differently in the hierarchy of randomness notions than Solmonoff's measure-one set, our merging notions are also more limited than Solomonoff's, and no claim is made to anything being a universal effective prior. Rather, what we get is information about merging with certain other computable probability measures reagrding which a given computable probability measure is absolutely continuous in the specified sense.

Solomonoff's theory of induction is an instance of the widely used Bayesian approach to scientific reasoning. The Bayesian approach requires agents with probabilistic beliefs to update their prior probability measures on observed data, but it leaves the choice of one's prior largely open. This raises the specter of arbitrariness: how can a scientific community ever achieve objective conclusions if inductive and statistical inference crucially depend on the particular priors chosen by the members of that community? Results on the merging of opinions provide an answer to these objections by showing that inter-subjective agreement (arguably, our best available proxy for objectivity) is guaranteed to occur within well-defined circumstances \cite{huttegger2015,Huttegger2017}. If the prior probabilities of the members of a community are \emph{initially sufficiently close}, then their posterior probabilities\textemdash that is, their prior probabilities conditioned on the data\textemdash will become closer and closer with increasing information. Thus, even if agents initially disagree in their probabilistic evaluations about an observational process, these disagreements are turned into a consensus as more data come in and the influence of their individual prior probabilities decreases. 

The classical sense in which probability measures are ``initially sufficiently close'' is absolute continuity. The notions of ``initially sufficiently close'' used in this paper, in Definition~\ref{defn:twomergingpreorders} and Figure~\ref{eqn:abcondiagram}, are sufficient conditions for absolute continuity expressed in terms of classical and effective features of the expectations of the ratios between the two probability measures. These conditions are easy to verify, since they just require checking that certain expressions are finite, or finite and computable. But they are also substantive, since they require different probability measures to share some basic modeling assumptions about the observational process at hand. In the light of these considerations, our main Theorem \ref{thm:main:mlr:weak} is especially significant. The random sequences of Cantor space with respect to some computable probability measure $\nu$, in the sense of $\mathsf{MLR}^{\nu}$ and $\mathsf{SR}^{\nu}$, are one way to make precise the informal idea of what the inductive assumptions of a Bayesian agent with prior $\nu$ are, since they are an expression of the effective statistical laws embedded in $\nu$ \cite{ZB2022}. From this perspective, Theorem \ref{thm:main:mlr:weak} says that, for a sequence of observational data, satisfying all of these effective statistical laws embedded into $\nu$ is equivalent to guaranteeing weak merging of opinions for all initially sufficiently close computable probability measures $\mu$. Not only does merging happen along all data streams which satisfy all effective statistical laws, but merging with sufficiently many other probability measures captures the same content as satisfying all of these effective statistical laws.

The results in this paper continue a tradition within the algorithmic randomness literature that develops effective versions of classical ``almost sure'' convergence results. Results on ``convergence to the truth'' are especially significant in the present context. These results establish when a sequence of random variables converges to a particular limit with probability one and, thus, provide another way of making precise the conditions under which a prior probability ceases its influence with increasing information. Many of the known effective versions of these classical theorems are anchored in Schnorr randomness \cite{Pathak2014aa,Rute2012aa,Huttegger2024-zb}, with Martin-L\"of randomness being conspicuously absent. By contrast, as shown in this article, weak merging has characterizations in terms of both Schnorr randomness and Martin-L\"of randomness. In that regard, the main result of this paper corresponds more closely to effective versions of Birkhoff's ergodic theorems, which also feature characertizations in terms of both Schnorr and Martin-L\"of randomness \cite{kucera1985measure,Bienvenu2012,Franklin2012,Franklin2014,Towsner2020}.

Finally, it is worth pointing out that the predictive setting of weak merging and merging for finite-horizon events is naturally aligned with several approaches in statistics, philosophy of science, and machine learning. In the philosophy of science, the paradigm of one-step-ahead prediction was at the center of Carnap's inductive logic program \cite{Carnap1950,Carnap1971,Carnap1980}. In statistics, predictive approaches have become increasingly influential \cite{Fong2023}. And much of machine learning consists in predicting finite batches of data based on a finite sequence of inputs \cite{Murphy2022-rr}.

This paper is organized as follows. In \S\ref{sec:dyadic} we briefly introduce some conventions on dyadic functions and recall the associated dyadic martingale notation. In \S\ref{sec:proof:main} we prove our main result Theorem~\ref{thm:main:mlr:weak} on the Kullback-Leibler divergence. An important part of the proof is Theorem~\ref{prop:klandtheincrements}, which shows that the Kullback-Leibler divergence $D_{\mathscr{F}_{n+1}}(\mu \mid\nu)(\omega)$ can be written as the increment between stage $n+1$ and $n$ of the predictable process from the Doob decomposition of the canonical $\nu$-submartingale $L_n(\omega)=-\ln\frac{\mu(\omega\upharpoonright n)}{\nu(\omega\upharpoonright n)}$ (cf. Example~\ref{ex:oursubmartingale}). In \S\ref{sec:proof:hellinger} we turn to the Hellinger distance and (i) show that Theorem~\ref{thmshiryaev} follows from the more general 
Kabanov-Lipcer-Shiryaev result, (ii) verify that
$\nu\abcon_{\mathsf{MLR}} \mu$ implies $\nu\abcon \mu$ in Proposition~\ref{prop:shiryaev}, and (iii) prove Corollary~\ref{corvovkish}. In \S\ref{sec:totalvariational} we turn to the total variational distance~$T$ and formulate some Cantor-space specific equivalent characterizations of the condition $\mathsf{MR}_0^{\nu}(\preceq, \mathscr{F}_{n+1}, T)$, which strongly resembles the limit (\ref{eqn:intro:solomonoffy}) from Solomonoff's theory. In \S\ref{sec:cr} we recall aspects of computable randomness $\mathsf{CR}^{\nu}$ and prove Theorem~\ref{thm:crzero}. Finally, in \S\ref{sec:middleground}, we prove what we know about slightly longer stretches of the horizon, namely Theorem~\ref{thm:main:mlr:weakmiddle}. We leave this material for last since changing the merging horizon introduces several distinctive complications.

\section{Dyadic functions and martingales}\label{sec:dyadic}

Since we work in Cantor space, it is often more convenient to work primitively with dyadic functions:
\begin{defn}[Dyadic functions]\label{defn:dyadic}
\text{}

A real-valued function is called \emph{dyadic} if it has domain $2^{<\mathbb{N}}$.  

A dyadic function $F:2^{<\mathbb{N}}\rightarrow \mathbb{R}$ is \emph{computable} if $F(\sigma)$ is a computable real number, uniformly in $\sigma$ from $2^{<\mathbb{N}}$.

If $F:2^{<\mathbb{N}}\rightarrow \mathbb{R}$ is a dyadic function, then $F_n:2^{\mathbb{N}}\rightarrow \mathbb{R}$ is given by $F_n(\omega)=F(\omega\upharpoonright n)$.
\end{defn}

Often, in what follows, we go back and forth between a dyadic function $F$ and the sequence of functions $F_n$ which it induces on Cantor space. For instance, the computability-theoretic properties are ascribed to $F$, while the measure-theoretic properties such as expectations are those of $F_n$. 

Here is another example. We say that a dyadic function $F:2^{<\mathbb{N}}\rightarrow \mathbb{R}$ is \emph{increasing} if $F(\sigma)\leq F(\tau)$ whenever $\sigma\preceq \tau$. Then, $F:2^{<\mathbb{N}}\rightarrow \mathbb{R}$ is increasing if and only if $F_n\leq F_{n+1}$ everywhere for all $n\geq 0$.

As another case in point, recall the traditional definition of a martingale from algorithmic randomness:
\begin{defn}[Dyadic martingales and related concepts]\label{defn:dyadicmartingale}
\text{}

A \emph{dyadic $\nu$-martingale} $M$ is a function $M:2^{<\mathbb{N}}\rightarrow \mathbb{R}$ satisfying the following, for all $\sigma$ in $2^{<\mathbb{N}}$,
\begin{equation}\label{eqn:martingalecondition}
M(\sigma) = M(\sigma 0)\nu(\sigma 0\mid \sigma)+M(\sigma 1)\nu(\sigma 1 \mid \sigma)
\end{equation}
\noindent Recall that $\nu$ is by assumption a full-support probability measure, so (\ref{eqn:martingalecondition}) is well-defined. {\it Submartingales} are defined the same, but with equality replaced by $\leq$; and {\it supermartingales} are defined just the same but with the equality replaced by $\geq$.
\end{defn}

Using the preferred version of the conditional expectation from (\ref{eqn:preferredconditional}), one can rephrase the martingale condition~(\ref{eqn:martingalecondition}) as $M_n = \mathbb{E}_{\nu}[M_{n+1}\mid \mathscr{F}_n]$, which matches the classical definition. Likewise, the submartingale condition can be rephrased as $M_n\leq \mathbb{E}_{\nu}[M_{n+1}\mid \mathscr{F}_n]$ and the supermartingale condition can be rephrased as $M_n\geq \mathbb{E}_{\nu}[M_{n+1}\mid \mathscr{F}_n]$.

Martingales are formalizations of betting strategies, but there are also natural examples of martingales throughout measure-theoretic mathematics:
\begin{ex}\label{ex:martingale:1}
 $M(\sigma)=\frac{\mu(\sigma)}{\nu(\sigma)}$ is a computable dyadic $\nu$-martingale. Part of the reason this martingale is important is that its ratios are likelihood ratios:
\begin{equation}\label{rmk:martingalebd:eqn:redeux}
 \frac{M_n(\omega)}{M_{n+1}(\omega)}= \frac{M(\omega\upharpoonright n)}{M(\omega\upharpoonright (n+1))} = \frac{\frac{\mu( \omega\upharpoonright n)}{\nu( \omega\upharpoonright n)}}{\frac{\mu( \omega\upharpoonright (n+1))}{\nu( \omega\upharpoonright (n+1))}} = \frac{ \nu( \omega \upharpoonright (n+1) \mid  \omega \upharpoonright n)}{ \mu( \omega \upharpoonright (n+1) \mid  \omega \upharpoonright n)}
\end{equation}

\end{ex}

Another natural class of effective martingales comes from considering the conditional expectations of lower-semicomputable (lsc) functions, where  function $f:2^{\mathbb{N}}\rightarrow [0,\infty]$ is \emph{lower semi-computable} if $f^{-1}(q,\infty]$ is c.e. open, uniformly in $q\in \mathbb{Q}^{\geq 0}$. 
\begin{ex}\label{ex:martingale:2}
 Suppose that $f:X\rightarrow [0,\infty]$ is lsc and has computable $\nu$-expectation. Then, $M(\sigma) = \mathbb{E}_{\nu}[f\mid \mathscr{F}_{\left|\sigma\right|}] (\sigma \overline{0})$ is a computable dyadic $\nu$-martingale. Here, $\sigma\overline{0}$ denotes the element of $2^{\mathbb{N}}$ that appends an infinite sequence of zeroes to $\sigma$.
\end{ex}

The main source of submartingales is Jensen's inequality. The second assertion in the following result notes an easy effectivization of this fact.

\begin{prop}\label{prop:jensensubmartingal}
Suppose that $M:2^{<\mathbb{N}}\rightarrow \mathbb{R}$ is a dyadic $\nu$-martingale which has range contained in a finite or infinite open interval $I$ of the reals. Suppose that $\varphi:I\rightarrow \mathbb{R}$ is convex. Then, $\varphi \circ M: 2^{<\mathbb{N}}\rightarrow \mathbb{R}$ is a dyadic $\nu$-submartingale. Moreover, if (i) $\varphi:I\rightarrow \mathbb{R}$ is computable continuous in addition to being convex, (ii) the endpoints of $I$ are in $\mathbb{Q}\cup \{-\infty,\infty\}$ and (iii) $M$ is a computable dyadic $\nu$-martingale, then $\varphi \circ M: 2^{<\mathbb{N}}\rightarrow \mathbb{R}$ is a computable dyadic $\nu$-submartingale.
\end{prop}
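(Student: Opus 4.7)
The plan is to reduce both parts to direct applications of standard facts: Jensen's inequality in its finite two-point form for the first assertion, and the uniform composition of computable functions for the second.

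For the non-effective part, I would unpack the martingale identity (\ref{eqn:martingalecondition}) as saying that $M(\sigma)$ is the convex combination $M(\sigma 0)\nu(\sigma 0\mid \sigma) + M(\sigma 1)\nu(\sigma 1\mid \sigma)$ of the two points $M(\sigma 0), M(\sigma 1) \in I$, with weights $\nu(\sigma 0\mid \sigma), \nu(\sigma 1\mid \sigma) \geq 0$ summing to $1$ (using full support of $\nu$ and the definition of conditional probability). Since $I$ is an interval, hence convex, $M(\sigma)$ lies in $I$ as well, so $\varphi$ is defined at all three points; and by convexity of $\varphi$,
$$\varphi(M(\sigma)) \;\leq\; \varphi(M(\sigma 0))\,\nu(\sigma 0\mid \sigma) + \varphi(M(\sigma 1))\,\nu(\sigma 1\mid \sigma),$$
which is exactly the dyadic $\nu$-submartingale condition for $\varphi \circ M$.

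For the effective part, I would appeal to the standard composition principle in computable analysis: if $\varphi$ is a computable continuous function on a computable open set $I\subseteq \mathbb{R}$ and $r \in I$ is a computable real, then $\varphi(r)$ is computable, uniformly in an index for $r$ together with a witness that $r\in I$. Hypothesis (ii)\textemdash that the endpoints of $I$ lie in $\mathbb{Q}\cup\{-\infty,\infty\}$\textemdash ensures that $I$ is a computable open set and, more importantly, that the membership $q\in I$ of a rational $q$ can be decided in finite time. Combined with hypothesis (iii), which gives us computable reals $M(\sigma)$ uniformly in $\sigma$, this yields a uniform algorithm for computing $\varphi(M(\sigma))$ to arbitrary precision, so that $\varphi\circ M$ is a computable dyadic function.

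The main (mild) obstacle is to make the composition uniformly effective in $\sigma$. Concretely, from a rational approximation $q$ to $M(\sigma)$ one must produce a certified rational $q' \in I$ sufficiently close to $M(\sigma)$ to which the algorithm for $\varphi$ may be applied. Here hypothesis (ii) does the work: since the endpoints of $I$ are rational or infinite, the condition $q'\in I$ is decidable, so even when $M(\sigma)$ is near the boundary of $I$ we can loop through better and better rational approximations until we find one verifiably inside $I$. With this in hand, the computability of $\varphi\circ M$ follows, and the first part already established that $\varphi\circ M$ is a $\nu$-submartingale.
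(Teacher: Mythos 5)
Your proposal is correct and follows essentially the same route as the paper: the two-point form of Jensen's inequality applied to the convex combination $M(\sigma)=M(\sigma 0)\nu(\sigma 0\mid\sigma)+M(\sigma 1)\nu(\sigma 1\mid\sigma)$ gives the submartingale condition, and the effective part is the standard fact that a computable continuous $\varphi$ on the computable Polish space $I$ (guaranteed by hypothesis (ii)) maps computable reals to computable reals, uniformly. Your extra care about certifying rational approximations inside $I$ is a correct elaboration of the detail the paper leaves implicit.
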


Condition (ii) is a simple sufficient condition for making sure that the interval $I$ is a computable Polish space. This fits with the notion of a computable continuous function. A function is said to be \emph{computable continuous} if inverse images of c.e. opens are uniformly c.e. open \cite{HR2021}.

\begin{proof}
Jensen's inequality shows that $\varphi(M)$ is a dyadic $\nu$-submartingale. Since $\varphi:I\rightarrow \mathbb{R}$ is computable continuous, it maps computable reals to computable reals. Hence, $\varphi(M(\sigma))$ is uniformly computable.
\end{proof}

Here is the main submartingale we use in this paper:
\begin{ex}\label{ex:oursubmartingale}
Let $M(\sigma)=\frac{\mu(\sigma)}{\nu(\sigma)}$ be a computable dyadic $\nu$-martingale. Then, $L(\sigma)=-\ln M(\sigma)$ is a computable dyadic $\nu$-submartingale such that $\mathbb{E}_{\nu} L_n\geq 0$ for all $n\geq 0.$

It is a computable dyadic $\nu$-submartingale by Proposition~\ref{prop:jensensubmartingal}, Example~\ref{ex:martingale:1}, and the fact that the negative logarithm is convex and computable continuous from $(0,\infty)$ to the reals. Further, submartingales $L$ satisfy $\mathbb{E}_{\nu} L_{n+1}\geq \mathbb{E}_{\nu} L_n$ for all $n\geq 0$. Since $L_0=0$, we have that $\mathbb{E}_{\nu} L_n\geq 0$ for all $n\geq 0$.

The increments of this submartingale are log-likelihood ratios:
\begin{equation}\label{rmk:martingalebd:eqn2}
L_{n+1}(\omega)-L_n(\omega)= \ln \frac{M(\omega\upharpoonright n)}{M(\omega\upharpoonright (n+1))} = \ln \frac{ \nu( \omega \upharpoonright (n+1) \mid  \omega \upharpoonright n)}{ \mu( \omega \upharpoonright (n+1) \mid  \omega \upharpoonright n)}
\end{equation}

\end{ex}

This submartingale is negative for many values, whereas in algorithmic randomness attention is often restricted to non-negative martingales and supermartingales. 

Finally, we note the following equivalent characterization of  $\nu\abcon_{kl} \mu$:
\begin{rmk}
Let $M(\sigma)=\frac{\mu(\sigma)}{\nu(\sigma)}$ be the dyadic $\nu$-martingale and let $L(\sigma)=-\ln M(\sigma)$ be the corresponding dyadic $\nu$-submartingale. Then, $\nu\abcon_{kl} \mu$ if and only if $\sup_n \mathbb{E}_{\nu} L_n<\infty$.
\end{rmk}
\noindent This is the formulation of $\nu\abcon_{kl} \mu$ which we use most often. Since we work with this so much, it is useful to take a moment to note the placement of $\nu(\sigma),\mu(\sigma)$ in the denominator and numerator of the expressions $M(\sigma)=\frac{\mu(\sigma)}{\nu(\sigma)}$ and $L(\sigma)=-\ln M(\sigma) = \ln \frac{\nu(\sigma)}{\mu(\sigma)}$.

\section{Kullback-Leibler Divergence and Proof of Theorem~\ref{thm:main:mlr:weak}}\label{sec:proof:main}

We begin by calculating some derivatives and then calculate $ D_{\mathscr{F}_{n+1}}(\nu \mid \mu)(\omega)$, where this is as in Definition~\ref{defn:therv}. Here and in what follows, we use $\iota$ and $\jmath$ as variables ranging over $\{0,1\}$. Further, if $\sigma$ is a binary string in $2^{<\mathbb{N}}$ of length $n$, then $\sigma\iota$ is the extension of $\sigma$ to a length $n+1$ binary string whose final entry is $\iota$. As is standard, if $\omega$ is an element of Cantor space $2^{\mathbb{N}}$ and $n\geq 0$, then $\omega\upharpoonright n$ is the element $\sigma$ of $2^{<\mathbb{N}}$ of length $n$ such that $\omega(i)=\sigma(i)$ for all $i<n$.
\begin{prop}\label{prop:derivativecalc}
One has:
\begin{equation}
\frac{d(\nu(\cdot \mid \mathscr{F}_n)(\omega) \upharpoonright \mathscr{F}_{n+1})}{d(\mu(\cdot \mid \mathscr{F}_n)(\omega) \upharpoonright \mathscr{F}_{n+1})}(\omega^{\prime}) = \sum_{\iota\in \{0,1\}} \frac{\nu(\omega\upharpoonright n\,\iota\mid \omega\upharpoonright n\,)}{\mu(\omega\upharpoonright n\,\iota\mid \omega\upharpoonright n\,)} \cdot X_n^{\iota}(\omega^{\prime}) \label{eqn:thedervweak}
\end{equation}
wherein $X_n^{1}(\omega)=\omega(n)$ and $X_n^{0}(\omega)=1-\omega(n)$. 
\end{prop}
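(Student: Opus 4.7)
The plan is to verify the defining Radon-Nikodym identity directly by exploiting the atomic structure of $\mathscr{F}_{n+1}$. Writing $R(\omega')$ for the function on the right-hand side of~\eqref{eqn:thedervweak}, the first step is to note that $R$ is $\mathscr{F}_{n+1}$-measurable, since it depends on $\omega'$ only through the bit $\omega'(n)$. Since $\mathscr{F}_{n+1}$ is generated by the finitely many cylinders $[\tau]$ with $\tau\in 2^{n+1}$, it then suffices to verify that
\[
\int_{[\tau]} R(\omega') \; d\bigl(\mu(\cdot\mid\mathscr{F}_n)(\omega)\upharpoonright\mathscr{F}_{n+1}\bigr)(\omega') \;=\; \nu([\tau]\mid\mathscr{F}_n)(\omega)
\]
for every such $\tau$, since this is the Radon-Nikodym characterization on the generating $\pi$-system.

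Next, I would split into the two natural cases. If $\tau\upharpoonright n \neq \omega\upharpoonright n$, then $[\tau]\cap [\omega\upharpoonright n]=\emptyset$, so by~\eqref{eqn:preferredconditional} both $\mu(\cdot\mid\mathscr{F}_n)(\omega)$ and $\nu(\cdot\mid\mathscr{F}_n)(\omega)$ assign $[\tau]$ mass zero, and both sides of the displayed identity vanish. Otherwise $\tau = (\omega\upharpoonright n)\,\iota$ for some $\iota\in\{0,1\}$. On $[\tau]$ every $\omega'$ satisfies $\omega'(n)=\iota$, so $X_n^{\iota}(\omega')=1$ while $X_n^{1-\iota}(\omega')=0$; hence $R$ is constant on $[\tau]$ with value $\frac{\nu(\tau\mid \omega\upharpoonright n)}{\mu(\tau\mid \omega\upharpoonright n)}$ (this ratio is well-defined because of the standing full-support hypothesis on $\mu$). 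Pulling the constant out of the integral gives
\[
\frac{\nu(\tau\mid \omega\upharpoonright n)}{\mu(\tau\mid \omega\upharpoonright n)} \cdot \mu(\tau\mid \omega\upharpoonright n) \;=\; \nu(\tau\mid \omega\upharpoonright n) \;=\; \nu([\tau]\mid\mathscr{F}_n)(\omega),
\]
as required.

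There is no substantive obstacle here: the argument is essentially bookkeeping once one sees that conditioning on $\mathscr{F}_n$ concentrates both measures on the single cylinder $[\omega\upharpoonright n]$, which within $\mathscr{F}_{n+1}$ has only the two atoms $[(\omega\upharpoonright n)\,0]$ and $[(\omega\upharpoonright n)\,1]$. The indicators $X_n^{0}$ and $X_n^{1}$ play only the role of packaging the two possible likelihood ratios into a single $\mathscr{F}_{n+1}$-measurable formula that returns the correct value on whichever of those two atoms $\omega'$ happens to lie in.
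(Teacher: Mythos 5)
Your proposal is correct and follows essentially the same route as the paper: both verify the defining Radon--Nikodym integral identity by exploiting that, within $[\omega\upharpoonright n]$, the $\sigma$-algebra $\mathscr{F}_{n+1}$ has only the two atoms $[(\omega\upharpoonright n)\,0]$ and $[(\omega\upharpoonright n)\,1]$ (the paper checks all $A\in\mathscr{F}_{n+1}$ via the four possibilities for $A\cap[\omega\upharpoonright n]$, while you check only the generating cylinders and invoke additivity, which is an immaterial difference). Your explicit remark that the right-hand side is $\mathscr{F}_{n+1}$-measurable is a welcome touch, since that is part of what the Radon--Nikodym derivative is required to satisfy.
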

Before we begin the proof, note that the random variables $X_n^{\iota}$ are defined so that $X_n^{\iota}(\omega)=1$ if and only if $\omega(n)=\iota$. Further, note that, to avoid an excess of parentheses, we write $\omega\upharpoonright n\,\iota$ for $(\omega\upharpoonright n)\,\iota$\textemdash that is, for the first $n$ bits of $\omega$ followed by $\iota$.
\begin{proof}
To show~(\ref{eqn:thedervweak}), we show that the expression on the right-hand side of it satisfies the defining property of the Radon-Nikodym derivative on the left-hand side of it. As an initial step, consider an arbitrary Borel event $A$, and consider the following, wherein $\frac{d(\nu(\cdot \mid \mathscr{F}_n)(\omega) \upharpoonright \mathscr{F}_{n+1})}{d(\mu(\cdot \mid \mathscr{F}_n)(\omega) \upharpoonright \mathscr{F}_{n+1})}(\omega^{\prime})$ means the formula on the right-hand side of (\ref{eqn:thedervweak}):
\begin{align}\label{eqn:derivative}
\int_A &\frac{d(\nu(\cdot \mid \mathscr{F}_n)(\omega) \upharpoonright \mathscr{F}_{n+1})}{d(\mu(\cdot \mid \mathscr{F}_n)(\omega) \upharpoonright \mathscr{F}_{n+1})}(\omega^{\prime}) \;d\mu(\cdot \mid \omega\upharpoonright n\,)(\omega^{\prime}) \notag \\ &= \sum_{\iota\in \{0,1\}} \nu(\omega\upharpoonright n\,\iota\mid \omega\upharpoonright n\,) \mu(A\mid [\omega\upharpoonright n\,\iota]) 
\end{align}
In the case where $A$ is in $\mathscr{F}_{n+1}$, then we have four options for $A\cap [\omega\upharpoonright n\,]$: it is empty or it is $[\omega\upharpoonright n\, 0]$ or it is $[\omega\upharpoonright n\, 1]$ or it is $[\omega\upharpoonright n\,]$. In all four cases, one sees that (\ref{eqn:derivative}) is equal to $\nu(A\mid \mathscr{F}_n)(\omega)=\nu(A\mid \omega\upharpoonright n)$.
\end{proof}

\begin{prop}\label{prop:calculatekullback}
One has:
\begin{equation}
 D_{\mathscr{F}_{n+1}}(\nu \mid \mu)(\omega)= \sum_{\iota\in \{0,1\}} \bigg(\ln \frac{\nu(\omega\upharpoonright n\, \iota \mid \omega\upharpoonright n\,)}{\mu(\omega\upharpoonright n\, \iota \mid \omega\upharpoonright n\,)} \bigg) \cdot \nu(\omega\upharpoonright n\, \iota \mid \omega\upharpoonright n\,)\label{eqn:dn1}
\end{equation}
\end{prop}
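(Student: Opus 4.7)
The plan is to compute $D_{\mathscr{F}_{n+1}}(\nu\mid\mu)(\omega)$ directly from its definition (Definition~\ref{defn:threedistance}(\ref{defn:threedistance:3})) applied to the setup in Definition~\ref{defn:therv}, using the Radon--Nikodym derivative already computed in Proposition~\ref{prop:derivativecalc}. Concretely, by unfolding Definition~\ref{defn:therv} one has
\begin{equation*}
D_{\mathscr{F}_{n+1}}(\nu\mid\mu)(\omega) \;=\; \mathbb{E}_{\nu(\cdot\mid\mathscr{F}_n)(\omega)}\!\left[\,\ln \frac{d\bigl(\nu(\cdot\mid\mathscr{F}_n)(\omega)\upharpoonright \mathscr{F}_{n+1}\bigr)}{d\bigl(\mu(\cdot\mid\mathscr{F}_n)(\omega)\upharpoonright \mathscr{F}_{n+1}\bigr)}\,\right].
\end{equation*}

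Next I would substitute the explicit formula (\ref{eqn:thedervweak}) from Proposition~\ref{prop:derivativecalc} into the logarithm. The key observation is that the indicator-like random variables $X_n^{0}, X_n^{1}$ partition Cantor space into $[\omega\upharpoonright n\, 0]$ and $[\omega\upharpoonright n\, 1]$: on $[\omega\upharpoonright n\,\iota]$, one has $X_n^{\iota}=1$ and $X_n^{1-\iota}=0$, so the sum in~(\ref{eqn:thedervweak}) collapses to a single term, and the logarithm becomes $\ln\tfrac{\nu(\omega\upharpoonright n\,\iota\mid \omega\upharpoonright n\,)}{\mu(\omega\upharpoonright n\,\iota\mid \omega\upharpoonright n\,)}$ on that cylinder. (The full-support hypothesis ensures no zero denominators arise.)

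Finally, I would compute the expectation against the measure $\nu(\cdot\mid\mathscr{F}_n)(\omega)\upharpoonright \mathscr{F}_{n+1}$, which assigns mass $\nu(\omega\upharpoonright n\,\iota\mid \omega\upharpoonright n\,)$ to the cylinder $[\omega\upharpoonright n\,\iota]$ for $\iota\in\{0,1\}$. Summing the two contributions yields exactly the right-hand side of~(\ref{eqn:dn1}).

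There is no real obstacle here; the step that needs the most care is just the bookkeeping verification that, restricted to $\mathscr{F}_{n+1}$, the conditional measure $\nu(\cdot\mid\mathscr{F}_n)(\omega)$ is concentrated on the two atoms $[\omega\upharpoonright n\, 0], [\omega\upharpoonright n\, 1]$ with the stated conditional weights, so that the abstract expectation of the Radon--Nikodym logarithm reduces to the explicit two-term sum. This is immediate from the definition (\ref{eqn:preferredconditional}) of the conditional probability and the fact that $\mathscr{F}_{n+1}$ is generated by the length-$(n{+}1)$ cylinders.
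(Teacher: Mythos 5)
Your proposal is correct and follows exactly the route the paper intends: the paper's proof is the one-line remark that the identity ``follows immediately from (\ref{eqn:thedervweak})'', and your write-up simply makes explicit the unfolding of Definition~\ref{defn:therv} via the $\nu$-expectation form of the Kullback--Leibler divergence, the collapse of the sum in~(\ref{eqn:thedervweak}) on each cylinder $[\omega\upharpoonright n\,\iota]$, and the two-atom computation of the expectation. No gaps; this is just a more detailed rendering of the same argument.
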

\begin{proof}
The equality follows immediately from (\ref{eqn:thedervweak}).
\end{proof}

We now note an effectivization of Doob's Decomposition Theorem \cite[120-121]{Williams1991aa} for dyadic submartingales. For a more general effectivization of the result, see \cite[Proposition 8.4]{Rute2012aa}. For our purposes in this article, we need to work with the explicit dyadic versions. 
\begin{thm}[Effective Dyadic Doob Decomposition]\label{thm:doobdecomp}
\text{}

Suppose that $L$ is a computable dyadic $\nu$-submartingale. Then there is a unique pair of computable dyadic $\nu$-martingale $N$ and increasing non-negative computable dyadic function $A$ with the following properties, where $\tau$ ranges over $2^{<\mathbb{N}}$ and $\iota$ ranges over $\{0,1\}$:
\begin{enumerate}[leftmargin=*]
    \item \label{eqn:apartofdoob:0} $L(\tau)=N(\tau)+A(\tau)$
    \item \label{eqn:apartofdoob:00} $N(\emptyset)=L(\emptyset)$
    \item \label{eqn:apartofdoob} $A(\tau\iota)-A(\tau) = \big(L(\tau 0) \nu(\tau 0 \mid \tau)+L(\tau 1)\nu(\tau 1\mid \tau) \big) - L(\tau)$
\end{enumerate}
\end{thm}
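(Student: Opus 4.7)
The plan is to construct the decomposition directly by recursion on the length of the binary string, then verify the listed properties and uniqueness. First I would set $A(\emptyset) := 0$ and define $A$ recursively on $2^{<\mathbb{N}}$ by the formula prescribed in (\ref{eqn:apartofdoob}), and then simply define $N := L - A$ so that (\ref{eqn:apartofdoob:0}) holds by construction and (\ref{eqn:apartofdoob:00}) follows from $A(\emptyset)=0$. The key structural observation, which the recursion forces, is that the right-hand side of (\ref{eqn:apartofdoob}) does not depend on $\iota$, so $A(\tau 0)=A(\tau 1)$ for every $\tau$; in martingale language, the process $A$ is \emph{predictable}. This single observation is what will make both the martingale property of $N$ and the uniqueness argument go through cleanly.

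Second, I would verify the four claims about $A$ and $N$. The fact that $A(\tau\iota)\geq A(\tau)$ is immediate from the submartingale inequality applied to $L$, namely $L(\tau 0)\nu(\tau 0\mid \tau)+L(\tau 1)\nu(\tau 1\mid \tau)\geq L(\tau)$; combined with $A(\emptyset)=0$, this yields $A\geq 0$ and monotonicity. For the martingale condition on $N$, I would compute, for $\tau$ of length $n$,
\begin{align*}
N(\tau 0)\nu(\tau 0\mid \tau)+N(\tau 1)\nu(\tau 1\mid \tau)
&= \bigl(L(\tau 0)\nu(\tau 0\mid \tau)+L(\tau 1)\nu(\tau 1\mid \tau)\bigr) - A(\tau 0) \\
&= \bigl(A(\tau)+L(\tau)\bigr) - A(\tau 0) \;=\; L(\tau)-A(\tau)\;=\;N(\tau),
\end{align*}
using predictability $A(\tau 0)=A(\tau 1)$ to pull $A$ out of the average and then (\ref{eqn:apartofdoob}) to rewrite it.

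Third, for computability, the recursion (\ref{eqn:apartofdoob}) expresses $A(\sigma)$ as a finite sum of length $|\sigma|$ whose summands are polynomial combinations of the uniformly computable reals $L(\tau)$ and $\nu(\tau\iota\mid \tau)$ for $\tau\preceq \sigma$; this yields uniform computability of $A$ (hence of $N=L-A$) on $2^{<\mathbb{N}}$.

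Finally, I would handle uniqueness by assuming $(N',A')$ is another such pair and considering the difference $D:=N-N'=A'-A$. Then $D$ is a dyadic $\nu$-martingale, while (\ref{eqn:apartofdoob}) applied to both $A$ and $A'$ shows $D(\tau 0)=D(\tau 1)$ for all $\tau$, so the martingale identity collapses to $D(\tau)=D(\tau 0)=D(\tau 1)$; by induction on string length and the shared initial condition $D(\emptyset)=N(\emptyset)-N'(\emptyset)=L(\emptyset)-L(\emptyset)=0$, we conclude $D\equiv 0$. I do not expect any serious obstacle here: the whole content beyond the classical Doob decomposition is the effectivity, and that is handled by the remark that the defining recursion is finitary in $L$ and $\nu$; the one subtlety worth flagging is the need to check that the prescribed increments in (\ref{eqn:apartofdoob}) are $\iota$-independent, since that predictability feature is what simultaneously gives the martingale property of $N$ and pins down the decomposition uniquely.
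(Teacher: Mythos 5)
Your proposal is correct and follows essentially the same route as the paper: the paper builds the martingale part first, as partial sums of the increments $H(\tau\iota)=L(\tau\iota)-\bigl(L(\tau 0)\nu(\tau 0\mid\tau)+L(\tau 1)\nu(\tau 1\mid\tau)\bigr)$, and then sets $A=L-N$, whereas you build the predictable part $A$ first by the recursion in (3) and set $N=L-A$; these yield the same decomposition, and both hinge on exactly the observation you flag, namely the $\iota$-independence of the increment. One small algebra slip: in your verification of the martingale identity the intermediate expression should be $\bigl(A(\tau 0)-A(\tau)+L(\tau)\bigr)-A(\tau 0)$ rather than $\bigl(A(\tau)+L(\tau)\bigr)-A(\tau 0)$, though the first line and the final conclusion $N(\tau 0)\nu(\tau 0\mid\tau)+N(\tau 1)\nu(\tau 1\mid\tau)=L(\tau)-A(\tau)=N(\tau)$ are correct as written.
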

In the context of Doob's decomposition theorem, $A$ is known as the \emph{predictable process}, since (\ref{eqn:apartofdoob}) shows that $A(\tau\iota)$ depends on $\tau$ and not~$\iota$. Note that the identity in (\ref{eqn:apartofdoob}) can also be expressed in classical terms as follows, where $\omega$ ranges over $2^{\mathbb{N}}$ and $n\geq 0$:
\begin{equation}\label{eqn:classicalapartofdoob}
A_{n+1}(\omega)-A_n(\omega) = \mathbb{E}_{\nu}[L_{n+1}\mid \mathscr{F}_n](\omega) - L_n(\omega)
\end{equation}

\begin{proof}
Uniqueness of $A$ follows by an induction on the length of $\tau$, with the base case being taken care of by (\ref{eqn:apartofdoob:0})-(\ref{eqn:apartofdoob:00}) and the induction step being taken of by  (\ref{eqn:apartofdoob}); and then uniqueness of $N$ follows from (\ref{eqn:apartofdoob:0}). For existence, we first define an auxiliary computable dyadic function $H$:
\begin{equation*}
H(\emptyset) = L(\emptyset), \hspace{5mm} H(\tau\iota) = L(\tau\iota)-\big(L(\tau 0) \nu(\tau 0 \mid \tau)+L(\tau 1)\nu(\tau 1\mid \tau) \big)
\end{equation*}
Since $\iota$ does not appear after the minus sign in $H(\tau \iota)$, we have the following:
\begin{equation}\label{eqn:hproperty}
H(\tau 0)\nu(\tau 0 \mid \tau) +H(\tau 1)\nu(\tau 1 \mid \tau)=0
\end{equation}
Then we define computable dyadic functions $N,A$ by:
\begin{equation*}
N(\tau)=\sum_{i\leq \left|\tau\right|} H(\tau\upharpoonright i), \hspace{10mm} A(\tau)=L(\tau)-N(\tau)
\end{equation*}
By (\ref{eqn:hproperty}), $N$ is a dyadic $\nu$-martingale. Using this and the fact that $L$ is a dyadic $\nu$-submartingale, one obtains $A(\tau\iota)-A(\tau) \geq 0$.
\end{proof}

The following theorem shows how to express the Kullback-Leibler divergence in terms of the \emph{increment} of the predictable process from the Doob decomposition of the submartingale from Example~\ref{ex:oursubmartingale}.
\begin{thm}\label{prop:klandtheincrements}
Let $L(\tau)=-\ln \frac{\mu(\tau)}{\nu(\tau)}$, and let $A$ be the predictable process from the Doob decomposition of $L$. Then, one has:
\begin{equation}\label{eqn:klsub}
 D_{\mathscr{F}_{n+1}}(\nu \mid \mu)(\omega)   = A(\omega\upharpoonright (n+1)\,)-A(\omega\upharpoonright n\,)
\end{equation}
\end{thm}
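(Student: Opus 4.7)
The plan is to carry out a direct computation: apply the Doob-decomposition increment formula from Theorem~\ref{thm:doobdecomp} to the submartingale $L(\tau)=-\ln\frac{\mu(\tau)}{\nu(\tau)}$ from Example~\ref{ex:oursubmartingale}, and then match the resulting expression with the formula for $D_{\mathscr{F}_{n+1}}(\nu\mid \mu)(\omega)$ already computed in Proposition~\ref{prop:calculatekullback}. No additional machinery should be needed.

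More concretely, I would set $\sigma=\omega\upharpoonright n$ and expand $A(\sigma\iota)-A(\sigma)$ using clause~(\ref{eqn:apartofdoob}) of Theorem~\ref{thm:doobdecomp}, which gives
\[
A(\sigma\iota)-A(\sigma) \;=\; \sum_{\iota'\in\{0,1\}} L(\sigma\iota')\,\nu(\sigma\iota'\mid \sigma) \;-\; L(\sigma).
\]
Since $L(\tau)=\ln\frac{\nu(\tau)}{\mu(\tau)}$ and $\sum_{\iota'}\nu(\sigma\iota'\mid \sigma)=1$, the subtracted term $L(\sigma)$ can be brought inside the sum as $\sum_{\iota'}\nu(\sigma\iota'\mid\sigma)\,\ln\frac{\nu(\sigma)}{\mu(\sigma)}$, yielding
\[
A(\sigma\iota)-A(\sigma)\;=\;\sum_{\iota'\in\{0,1\}} \nu(\sigma\iota'\mid \sigma)\,\ln\frac{\nu(\sigma\iota')/\nu(\sigma)}{\mu(\sigma\iota')/\mu(\sigma)} \;=\; \sum_{\iota'\in\{0,1\}} \nu(\sigma\iota'\mid \sigma)\,\ln\frac{\nu(\sigma\iota'\mid\sigma)}{\mu(\sigma\iota'\mid\sigma)}.
\]
This is precisely the right-hand side of~(\ref{eqn:dn1}) in Proposition~\ref{prop:calculatekullback} for $D_{\mathscr{F}_{n+1}}(\nu\mid\mu)(\omega)$, so taking $\iota=\omega(n)$ establishes~(\ref{eqn:klsub}).

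I do not anticipate a genuine obstacle here: the two formulas we need are already on the table, and the identity is essentially the observation that the Doob increment of $-\ln\frac{\mu}{\nu}$ in clause~(\ref{eqn:apartofdoob}) is the $\nu$-conditional expected log-likelihood gain, which by elementary algebra coincides with the one-step Kullback--Leibler divergence between the conditional one-step distributions of $\nu$ and $\mu$. The only things to be careful about are bookkeeping: keeping track of which measure sits in the numerator versus the denominator (the convention set just before \S\ref{sec:proof:main} that $L=\ln\frac{\nu}{\mu}$), and noting that clause~(\ref{eqn:apartofdoob}) makes $A(\sigma\iota)-A(\sigma)$ independent of $\iota$, which is needed to justify specializing $\iota=\omega(n)$ on the left-hand side of~(\ref{eqn:klsub}).
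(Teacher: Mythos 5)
Your proposal is correct and follows essentially the same route as the paper: both proofs combine the explicit formula for $D_{\mathscr{F}_{n+1}}(\nu\mid\mu)(\omega)$ from Proposition~\ref{prop:calculatekullback} with the increment formula (\ref{eqn:apartofdoob}) of Theorem~\ref{thm:doobdecomp}, via the identity $\ln\frac{\nu(\sigma\iota\mid\sigma)}{\mu(\sigma\iota\mid\sigma)}=L(\sigma\iota)-L(\sigma)$. The only cosmetic difference is the direction of the computation (you start from the Doob increment and arrive at the Kullback-Leibler formula, while the paper goes the other way), and your closing remark about the $\iota$-independence of $A(\sigma\iota)-A(\sigma)$ is a correct and worthwhile point of care.
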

\begin{proof}
To see this, first note for $M(\tau) = \frac{\mu(\tau)}{\nu(\tau)}$:
\begin{equation*}
    \ln \frac{\nu(\omega\upharpoonright n\, \iota \mid \omega\upharpoonright n\,)}{\mu(\omega\upharpoonright n\, \iota \mid \omega\upharpoonright n\,)} = - \ln M(\omega \upharpoonright n \iota) + \ln M(\omega \upharpoonright n) = L(\omega\upharpoonright n\,\iota)-L(\omega\upharpoonright n\,).
\end{equation*}
Then, by (\ref{eqn:dn1}), we have:
\begin{align*}
D_{\mathscr{F}_{n+1}}&(\nu \mid \mu)(\omega)  \\
&=  L(\omega\upharpoonright n\, 0)\nu(\omega\upharpoonright n\,0 \mid \omega\upharpoonright n\,)+ L(\omega\upharpoonright n\, 1)\nu(\omega\upharpoonright n\, 1 \mid \omega\upharpoonright n\,)-L(\omega\upharpoonright n\,).   
\end{align*}
From this and Theorem~\ref{thm:doobdecomp}(\ref{eqn:apartofdoob}) one gets the desired conclusion (\ref{eqn:klsub}).
\end{proof}

In this and the next proposition, we focus on turning lower semi-computable functions (abbreviated as \emph{lsc} functions) into predictable processes. 

Our first proposition is a simple ``predictive process'' variant of the usual ``from below'' representations of lsc functions (see \cite[315]{Li1997aa} or \cite[Lemma 5.6]{Miyabe2013-fd}).
\begin{prop}\label{prop:frommlrtest2athing}
Suppose that $f:2^{\mathbb{N}}\rightarrow [0,\infty]$ is lsc. 

Then, there is an increasing computable dyadic function $A:2^{<\mathbb{N}}\rightarrow \mathbb{Q}^{\geq 0}$ such that $A(\emptyset)=0$, $A(\sigma 0)=A(\sigma 1)$ for all $\sigma$ in $2^{<\mathbb{N}}$, and $f(\omega)=\sup_n A(\omega\upharpoonright n)$. 

Further, for any rational $\epsilon>0$, one can construct such an $A$ with the further property that $A(\sigma \iota)-A(\sigma)<\epsilon$ for each $\sigma$ in $2^{<\mathbb{N}}$ and each $\iota$ in $\{0,1\}$.

Finally, for any probability measure $\nu$, one has that $\sup_n \mathbb{E}_{\nu} A_n =\mathbb{E}_{\nu} f$.
\end{prop}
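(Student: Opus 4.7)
The plan is to start from the standard ``from below'' representation of a lower semi-computable function on Cantor space: there is a uniformly computable sequence $(\sigma_k, q_k)_{k\in\mathbb{N}}$ with $\sigma_k \in 2^{<\mathbb{N}}$ and $q_k \in \mathbb{Q}^{>0}$ such that $f(\omega) = \sum_{k:\sigma_k \preceq \omega} q_k$ for every $\omega \in 2^{\mathbb{N}}$ (cf.\ the references cited just before the statement). The key bookkeeping device is the truncated approximation
\begin{equation*}
f_n(\omega) \;:=\; \sum \{\, q_k : k < n,\ |\sigma_k| \leq n,\ \sigma_k \preceq \omega \,\},
\end{equation*}
which is rational-valued, uniformly computable, non-decreasing in $n$, depends on $\omega$ only through $\omega \upharpoonright n$, and satisfies $f_n(\omega) \nearrow f(\omega)$ pointwise.

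Given this, I would set $A(\emptyset) := 0$ and $A(\tau) := f_{|\tau|-1}(\tau)$ for $|\tau| \geq 1$. Because $f_{|\tau|-1}$ consults only the first $|\tau|-1$ bits of its argument, the predictability identity $A(\sigma 0) = A(\sigma 1)$ is automatic. Each $A(\tau)$ is a finite sum of enumerated positive rationals, so $A$ is uniformly computable with values in $\mathbb{Q}^{\geq 0}$; monotonicity $A(\sigma) \leq A(\tau)$ for $\sigma \preceq \tau$ follows from the monotonicity of the defining index set; and $\sup_n A(\omega \upharpoonright n) = \sup_n f_{n-1}(\omega) = f(\omega)$.

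For the ``further'' clause with rational $\epsilon > 0$, I would cap the increments by recursion. Set $A^{\epsilon}(\emptyset) := 0$ and
\begin{equation*}
A^{\epsilon}(\tau) \;:=\; \min\!\bigl( A^{\epsilon}(\tau \upharpoonright (|\tau|-1)) + \tfrac{\epsilon}{2},\ A(\tau) \bigr)
\end{equation*}
for $|\tau| \geq 1$, where $A$ is the process just constructed. Predictability passes through because the recursion consults only $\tau \upharpoonright (|\tau|-1)$ and $A$ is already predictable; monotonicity, rational-valuedness, and uniform computability transfer directly; and each increment is at most $\epsilon/2 < \epsilon$. The supremum identity $\sup_n A^{\epsilon}(\omega \upharpoonright n) = f(\omega)$ follows from the observation that $A^{\epsilon} \leq A$ pointwise, while any gap of more than $\epsilon/2$ between the limit $\lim_n A^{\epsilon}(\omega \upharpoonright n)$ and $f(\omega)$ would force the cap to be binding at infinitely many $n$, producing infinitely many full $\epsilon/2$-increments and hence divergence, a contradiction.

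The final assertion $\sup_n \mathbb{E}_{\nu} A_n = \mathbb{E}_{\nu} f$ is then an immediate application of the Monotone Convergence Theorem to $0 \leq A_n \nearrow f$. The only step that takes any care is the supremum identity in the $\epsilon$-refined construction; the rest is routine bookkeeping around the standard representation of lsc functions on Cantor space.
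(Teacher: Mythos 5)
Your proof is essentially correct, but it takes a genuinely different route from the paper's. The paper works directly from the superlevel-set representation $f^{-1}(q_m,\infty]=\bigsqcup_i[\tau_{m,i}]$ and defines $A(\sigma\iota)$ in one pass as a maximum over those rationals $q_m$ that have been ``confirmed'' by stage $\left|\sigma\right|$ and that do not overshoot $A(\sigma)+\epsilon$; the increment bound and predictability are built into the definition, at the cost of two somewhat fiddly contradiction arguments to verify $f(\omega)=\sup_n A(\omega\upharpoonright n)$. You instead use the weighted-cylinder-sum representation $f=\sum_k q_k 1_{[\sigma_k]}$, read off $A$ as a truncated partial sum (which makes monotonicity, predictability, and the supremum identity essentially immediate), and then impose the increment bound by a separate capping recursion. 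Your route buys a cleaner verification of the basic properties and cleanly separates the two halves of the statement; the paper's buys a single construction and avoids invoking the sum representation (which is standard but itself needs the same kind of enumeration argument you are implicitly relying on).

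One spot to tighten: in the supremum identity for $A^{\epsilon}$, you argue only that a gap \emph{exceeding} $\epsilon/2$ forces the cap to bind infinitely often. As literally stated this leaves gaps in $(0,\epsilon/2]$ unaddressed. The clean dichotomy is: at each step either the cap is non-binding, in which case $A^{\epsilon}(\omega\upharpoonright(n+1))=A(\omega\upharpoonright(n+1))$, or it is binding, contributing a full $\epsilon/2$ increment. If the former occurs infinitely often then $\sup_n A^{\epsilon}(\omega\upharpoonright n)=\sup_n A(\omega\upharpoonright n)=f(\omega)$; if the latter occurs cofinitely then $\lim_n A^{\epsilon}(\omega\upharpoonright n)=\infty$, and since $A^{\epsilon}\leq A$ this forces $f(\omega)=\infty$ as well. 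Either way the gap is zero. With that one-line repair the argument is complete.
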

\begin{proof}
Let $q_m$ be a computable sequence which enumerates $\mathbb{Q}^{\geq 0}$ with $q_0=0$. Let $\tau_{m,i}$ be a computable sequence in $2^{<\mathbb{N}}$ such that $f^{-1}(q_m,\infty]=\bigsqcup_i [\tau_{m,i}]$ for all $m\geq 0$. Define $A(\emptyset)=0$ and 
\begin{equation*}
A(\sigma \iota)=\max(A(\sigma),\max\{q_m: q_m <A(\sigma)+\epsilon \; \& \; m\leq \left|\sigma\right| \; \& \;  \exists \; i\leq \left|\sigma\right| \; \sigma \succeq \tau_{m,i}\})
\end{equation*}
Then by definition $A(\sigma)\leq A(\sigma\iota)$ and so $A$ is increasing. To see that $f(\omega)=\sup_n A(\omega\upharpoonright n)$:
\begin{enumerate}[leftmargin=*]
\item Suppose $f(\omega)<\sup_n A(\omega\upharpoonright n)$. Since $A(\emptyset)=0$, there is least $n\geq 0$ such that $f(\omega)<A(\omega\upharpoonright (n+1))$. Let $A(\omega\upharpoonright (n+1))=q_m$ and let $\sigma=\omega\upharpoonright n$, so that $q_m<A(\sigma)+\epsilon$ and $m\leq \left|\sigma\right|$ and for some $i\leq \left|\sigma\right|$ we have $\sigma \succeq \tau_{m,i}$. Then $\omega$ is in $[\tau_{m,i}]\subseteq f^{-1}(q_m,\infty]$, and so $f(\omega)>q_m$, a contradiction.
\item Suppose $f(\omega)>\sup_n A(\omega\upharpoonright n)$. Choose rational $q_m$ such that $f(\omega)>q_m>\sup_n A(\omega\upharpoonright n)$ and $q_m<\sup_n A(\omega\upharpoonright n)+\epsilon$. Since $A$ is increasing, choose $n_0\geq 0$ such that $q_m<A(\omega\upharpoonright n)+\epsilon$ for all $n\geq n_0$. Further, since $f(\omega)>q_m$, there is $i\geq 0$ such that $\omega$ is in $[\tau_{m,i}]$. Let $n=\max(n_0,m,i,\left|\tau_{m,i}\right|)$ and let $\sigma=\omega\upharpoonright n$. Then $q_m<A(\sigma)+\epsilon$ and $m\leq \left|\sigma\right|$ and $i\leq \left|\sigma\right|$ and $\sigma \succeq \tau_{m,i}$. Then $A(\omega\upharpoonright (n+1))\geq q_m$, a contradiction.
\end{enumerate}

Finally, the equality $\sup_n \mathbb{E}_{\nu} A_n =\mathbb{E}_{\nu} f$ follows from the monotone convergence theorem.
\end{proof}

The following proposition is more complicated, and it involves constructing a computable probability measure $\mu$ from both the original computable probability measure $\nu$ and a computable dyadic $A$ which satisfies the properties from the previous proposition.
\begin{prop}\label{prop:iamcomplicated2}
Let $A:2^{<\mathbb{N}}\rightarrow \mathbb{Q}^{\geq 0}$ be increasing and computable, and such that $A(\emptyset)=0$ and $A(\sigma 0)=A(\sigma 1)$ for all $\sigma$ in $2^{<\mathbb{N}}$. Then, for all computable~$\nu$, there is a computable~$\mu$ such that $A$ is the predictable process of the dyadic $\nu$-submartingale  $L(\sigma)=-\ln \frac{\mu(\sigma)}{\nu(\sigma)}$.
\end{prop}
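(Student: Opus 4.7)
The plan is to construct $\mu$ by specifying its conditional probabilities $\mu(\tau\iota \mid \tau)$ at each $\tau \in 2^{<\mathbb{N}}$ and then setting $\mu(\sigma) = \prod_{i < |\sigma|} \mu(\sigma\upharpoonright(i{+}1) \mid \sigma\upharpoonright i)$, with $\mu(\emptyset) = 1$. Writing $L(\sigma) = -\ln\frac{\mu(\sigma)}{\nu(\sigma)}$ and abbreviating $p_\iota = \nu(\tau\iota\mid\tau)$ and $q_\iota = \mu(\tau\iota\mid\tau)$, the increment at $\tau$ of the predictable process of $L$ specified by Theorem~\ref{thm:doobdecomp}(\ref{eqn:apartofdoob}) rearranges, exactly as in Proposition~\ref{prop:calculatekullback} and Theorem~\ref{prop:klandtheincrements}, to the Kullback-Leibler divergence $p_0\ln(p_0/q_0) + p_1\ln(p_1/q_1)$. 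So the entire construction reduces to arranging, at each $\tau$, that this quantity equals the nonnegative rational $\Delta(\tau) := A(\tau\iota) - A(\tau)$ (which does not depend on $\iota$ by the hypothesis $A(\tau 0) = A(\tau 1)$), with computable $q_0, q_1 > 0$ summing to $1$, uniformly in $\tau$.

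To solve this per-node equation I would parameterize the simplex by $t \geq 0$ via $q_0(t) = p_0 e^{-t}$ and $q_1(t) = 1 - p_0 e^{-t}$, and set $f(t) = p_0 t + p_1\ln\frac{p_1}{1 - p_0 e^{-t}}$. A direct calculation gives $f(0) = 0$ and $f'(t) = p_0 (1 - e^{-t})/(1 - p_0 e^{-t})$, which vanishes at $t = 0$ and is strictly positive for $t > 0$, and $f(t) \to \infty$ as $t \to \infty$ for each fixed $p_0 > 0$. Hence $f$ is a continuous strictly increasing bijection from $[0, \infty)$ onto $[0, \infty)$, and the equation $f(t) = \Delta(\tau)$ has a unique solution $t(\tau) \in [0, \infty)$, yielding $q_\iota(\tau) > 0$ with $q_0(\tau) + q_1(\tau) = 1$ as required.

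For uniform computability, since $\nu$ is computable with full support, each $p_\iota(\tau)$ is uniformly computable in $\tau$, and $\Delta(\tau)$ is a uniformly computable rational, so $f$ is uniformly computable in $(t, \tau)$. To compute $t(\tau)$: when $\Delta(\tau) = 0$ set $t(\tau) = 0$ exactly; otherwise, locate a rational upper bound $T(\tau)$ adaptively by searching through $T = 1, 2, \ldots$ until $f(T) > \Delta(\tau)$ can be certified, and then bisect on $[0, T(\tau)]$. This yields $t(\tau)$ and hence $q_\iota(\tau)$ uniformly computable in $\tau$, making $\mu$ a computable probability measure with full support. By the uniqueness clause in Theorem~\ref{thm:doobdecomp}, the predictable process of $L = -\ln(\mu/\nu)$ then coincides with $A$, since both vanish at $\emptyset$ and have the same per-node increments by construction. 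The main delicate point is precisely this uniform inversion of $f$: one has $p_\iota(\tau) > 0$ only pointwise rather than uniformly in $\tau$, so the upper bound for bisection must be located adaptively per $\tau$; the search nonetheless terminates whenever $p_0 > 0$ because $f$ is unbounded.
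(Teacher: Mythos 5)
Your proof is correct and follows essentially the same route as the paper's: a node-by-node recursion in which the Doob-increment condition (equivalently, by Proposition~\ref{prop:calculatekullback} and Theorem~\ref{prop:klandtheincrements}, the requirement that the conditional Kullback-Leibler divergence equal $A(\tau\iota)-A(\tau)$) is reduced to a single one-variable equation, solved via the effective Intermediate Value Theorem for a strictly monotone function, with a decidable case-break on whether $A(\tau\iota)=A(\tau)$. The only difference is the choice of parameterization\textemdash you invert $t\mapsto p_0t+p_1\ln\frac{p_1}{1-p_0e^{-t}}$ on $[0,\infty)$, whereas the paper inverts a function of $v=e^{-L(\sigma 1)}$ on an interval $[0,d]$\textemdash which is cosmetic.
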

\begin{proof}
We define $L(\sigma)$ by recursion on length of $\sigma$, and set $\mu(\sigma)=e^{-L(\sigma)}\cdot \nu(\sigma)$ along the way, so that we have the equation $L(\sigma)=-\ln \frac{\mu(\sigma)}{\nu(\sigma)}$.

We define $L(\emptyset)=0$. Supposing that $L(\sigma)$ has been defined, we must define $L(\sigma 0)$ and $L(\sigma 1)$ such that both of the following hold:
\begin{align*}
& L(\sigma 0)\nu(\sigma 0\mid \sigma) +L(\sigma 1)\nu(\sigma 1\mid \sigma) =L(\sigma)+A(\sigma \iota)-A(\sigma) \\
& e^{-L(\sigma 0)} \nu(\sigma 0\mid \sigma)+e^{-L(\sigma 1)} \nu(\sigma 1\mid \sigma)= e^{-L(\sigma)}
\end{align*}
In this, the first equation comes from Theorem~\ref{thm:doobdecomp}(\ref{eqn:apartofdoob}), and the second equation is just the additivity condition for $\mu$ to be a measure, written out in terms of $L$ and $\nu$. To simplify the equations, we set $x:=L(\sigma 0)$ and $y:=L(\sigma 1)$ and we set $p:=\nu(\sigma 0\mid \sigma)$ and we set $c:=L(\sigma)+A(\sigma \iota)-A(\sigma)$ and $d:=e^{-L(\sigma)}$. Hence, given $p,c,d$ as described, we must solve for $x,y$ in the two following equations:
\begin{equation*}
xp +y(1-p)  = c, \hspace{10mm} e^{-x} p+e^{-y} (1-p) =d
\end{equation*}
This is the same as the equations $(e^{-x})^p \cdot (e^{-y})^{(1-p)} = e^{-c}$ and $e^{-x} p+e^{-y} (1-p) =d$. By setting $u:=e^{-x}$ and $v:=e^{-y}$, this means that for $p,c,d$ as described above, we must solve for $u,v>0$ in the pair of equations $u^p\cdot v^{1-p} = e^{-c}$ and $p\cdot u +(1-p)\cdot v  =d$. We can rewrite these by solving for $u$ in each:
\begin{equation*}
u = \frac{e^{-\frac{c}{p}}}{v^{\frac{1-p}{p}}}, \hspace{10mm}
u = \frac{d-(1-p)\cdot v}{p}  
\end{equation*}
Note that the first of these implies that $u>0$ when $v>0$. By setting these equal to each other and cross multiplying, it remains to solve for $v>0$ in the equation $p \cdot e^{-\frac{c}{p}}  = d\cdot v^{\frac{1-p}{p}} -(1-p)\cdot v^{\frac{1}{p}}$. We simplify as: $(1-p)\cdot v^{\frac{1}{p}}-d\cdot v^{\frac{1}{p}-1}+p\cdot e^{-\frac{c}{p}}=0$. Consider the real-valued computable continuous function $f:\mathbb{R}^{\geq 0}\rightarrow \mathbb{R}$ given by 
\begin{equation*}
f(v)=(1-p)\cdot v^{\frac{1}{p}}-d\cdot v^{\frac{1}{p}-1}+p\cdot e^{-\frac{c}{p}}
\end{equation*}
Note that $f(0)=p\cdot e^{-\frac{c}{p}}>0$. Further recall that $d>0$ and note that one has
\begin{equation*}
f(d) = (1-p)\cdot d^{\frac{1}{p}}-d^{\frac{1}{p}}+p\cdot e^{-\frac{c}{p}} = p\cdot (e^{-\frac{c}{p}}-d^{\frac{1}{p}})
\end{equation*}
Further, recalling the definitions of $c,d$ from above we have:
\begin{equation*}
f(d) = p\cdot ((e^{-\frac{1}{p}})^{L(\sigma)+(A(\sigma \iota)-A(\sigma))}-(e^{-\frac{1}{p}})^{L(\sigma)})
\end{equation*}
Since $0<e^{-\frac{1}{p}}<1$ and since for fixed base $0<a<1$ one has that $g(x)=a^x$ is decreasing and since $L(\sigma)\leq L(\sigma)+(A(\sigma \iota)-A(\sigma))$ due to $A$ being increasing, one has that $f(d)\leq 0$. Further, since $A$ is rational-valued and computable, we can computably test for the whether the condition $A(\sigma \iota)=A(\sigma)$ is satisfied. If this condition is satisfied, then $f(d)=0$. If this condition is not satisfied, then since $g(x)=a^x$ is strictly decreasing for fixed base $0<a<1$, we have that $f(d)<0$. In this case, since $f(d)< 0<f(0)$ and $f$ is strictly decreasing on $(0,d)$, by the effective Intermediate Value Theorem we can find a computable real number $v$ in the interval $(0,d)$ such that $f(v)=0$.\footnote{\label{fn:effivt} Cf. \cite[Theorem II.6.6]{Simpson2009aa}. In general, the effective Intermediate Value Theorem is non-uniform, but one of the special cases where it is uniform is when the solution is unique (cf. \cite[\S{10.4} pp. 371-373 and Exercise 10.9.14(2) p. 398]{Dzhafarov2022-bx}).} In either case, we can then find a computable real number $v$ in the interval $(0,d]$ such that $f(v)=0$; and we are done since the case-break is effective.
\end{proof}

\begin{proofdetail}
To see that $f$ is strictly decreasing on $(0,d)$, it suffices to show that its derivative is negative on $(0,d)$. Its derivative is:
\begin{equation*}
f^{\prime}(v)=(\frac{1}{p}-1)\cdot v^{\frac{1}{p}-1} - d\cdot (\frac{1}{p}-1) \cdot v^{\frac{1}{p}-2}
\end{equation*}
Then for $v>0$ one has: $f^{\prime}(v)<0$ iff $(\frac{1}{p}-1)\cdot v^{\frac{1}{p}-2} \cdot (v-d)<0$ iff $v<d$. Hence $f^{\prime}$ is negative on $(0,d)$.
\end{proofdetail}

We are now in a position to prove our main result. In this proof, we use the well-known characterization, due to Levin (\cite{Levin1976}), that $\omega$ is in $\mathsf{MLR}^{\nu}$ iff $f(\omega)<\infty$ for all lsc $f:2^{\mathbb{N}}\rightarrow [0,\infty]$ with finite $\nu$-expectation. These lsc functions are called \emph{Martin-L\"of $L_1(\nu)$ tests}. Similarly,  $\omega$ is in $\mathsf{SR}^{\nu}$ iff $f(\omega)<\infty$ for all lsc $f:2^{\mathbb{N}}\rightarrow [0,\infty]$ with finite computable $\nu$-expectation. These lsc functions are called \emph{Schnorr $L_1(\nu)$ tests}, and this characterization is due to Miyabe (\cite[Theorem 3.5]{Miyabe2013}).

\thmmainmlrweak*

\begin{proof}
Suppose that $\omega$ is in $\mathsf{MLR}^{\nu}$. Suppose that $\nu\abcon_{kl} \mu$. Let $L(\sigma)=-\ln \frac{\mu(\sigma)}{\nu(\sigma)}$ be the dyadic $\nu$-submartingale. Then $\sup_n \mathbb{E}_{\nu} L_n<\infty$. Let $L=N+A$ be the Doob decomposition. Since $N(\emptyset)=L(\emptyset)=0$, we have $\mathbb{E}_{\nu} A_n=\mathbb{E}_{\nu} L_n- \mathbb{E}_{\nu} N_n = \mathbb{E}_{\nu} L_n- \mathbb{E}_{\nu} N_0 =\mathbb{E}_{\nu} L_n$. Then $\sup_n \mathbb{E}_{\nu} A_n=\sup_n \mathbb{E}_{\nu} L_n<\infty$. Let $f(\omega)=\lim_n A_n(\omega) = \sup_n A(\omega\upharpoonright n)$. By the Monotone Convergence Theorem, we have $\mathbb{E}_{\nu} f =\lim_n \mathbb{E}_{\nu} A_n=\sup_n \mathbb{E}_{\nu} A_n<\infty$. Then $f$ is a Martin-L\"of $L_1(\nu)$ test. Since $\omega$ is in $\mathsf{MLR}^{\nu}$, we have $f(\omega)<\infty$. Further, since $A_n$ is non-negative and increasing and $A(\emptyset)=L(\emptyset)-N(\emptyset)=0$, by telescoping we have $f(\omega)=\sum_n (A_{n+1}(\omega)-A_n(\omega))$. Then by Theorem~\ref{prop:klandtheincrements} we have 
\begin{equation*}
\sum_n D_{\mathscr{F}_{n+1}}(\nu \mid \mu)(\omega) =\sum_n \big(A(\omega\upharpoonright (n+1)\,)-A(\omega\upharpoonright n\,)\big) = f(\omega)<\infty
\end{equation*}

Conversely, suppose $\omega$ is in  $\mathsf{MR}^{\nu}_1(\abcon_{kl}, \mathscr{F}_{n+1},D)$; we must show $\omega$ is in $\mathsf{MLR}^{\nu}$. Suppose that $f:2^{\mathbb{N}}\rightarrow [0,\infty]$ is a Martin-L\"of $L_1(\nu)$ test; we must show that $f(\omega)<\infty$. By Proposition~\ref{prop:frommlrtest2athing}, there is increasing computable $A:2^{<\mathbb{N}}\rightarrow \mathbb{Q}^{\geq 0}$ such that 
\begin{enumerate}[leftmargin=*]
    \item\label{prop:awesome:1v2} $A(\emptyset)=0$, and $A(\sigma 0)=A(\sigma 1)$ for all $\sigma$ in $2^{<\mathbb{N}}$
    \item\label{prop:awesome:2v2} $f(\omega)=\sup_n A(\omega\upharpoonright n)$
    \item\label{prop:awesome:4v2} $\sup_n \mathbb{E}_{\nu} A_n =\mathbb{E}_{\nu} f$.
\end{enumerate}
From $f$ being a Martin-L\"of $L_1(\nu)$ test and from (\ref{prop:awesome:4v2}), we further have:
\begin{enumerate}[leftmargin=*] \setcounter{enumi}{3}
    \item\label{prop:awesome:4v3} $\sup_n \mathbb{E}_{\nu} A_n <\infty$.
\end{enumerate}
By (\ref{prop:awesome:1v2}), we can apply Proposition~\ref{prop:iamcomplicated2} to obtain a computable $\mu$ such that $A$ is the predictable process of $L$, where 
$L(\sigma)=-\ln \frac{\mu(\sigma)}{\nu(\sigma)}$.
Then the Doob decomposition of $L$ is $L=N+A$. Since $L(\emptyset)=0$, we have $N(\emptyset)=0$ and hence:
\begin{equation}\label{eqn:refertome}
\mathbb{E}_{\nu} L_n=\mathbb{E}_{\nu} L_n- \mathbb{E}_{\nu} N_0=\mathbb{E}_{\nu} L_n- \mathbb{E}_{\nu} N_n=\mathbb{E}_{\nu} A_n
\end{equation}
From this and (\ref{prop:awesome:4v3}) we get $\nu\abcon_{kl} \mu$. Since $\omega$ is in $\mathsf{MR}^{\nu}_1(\abcon_{kl}, \mathscr{F}_{n+1},D)$, we have $\sum_n D_{\mathscr{F}_{n+1}}(\nu \mid \mu)(\omega)<\infty$. Further, since $A_n$ is non-negative and increasing and $A(\emptyset)=0$, by (\ref{prop:awesome:2v2}) and telescoping  we have $f(\omega)=\sum_n (A_{n+1}(\omega)-A_n(\omega))$. By Theorem~\ref{prop:klandtheincrements}, we have 
\begin{equation*}
 f(\omega) = \sum_n \big(A(\omega\upharpoonright (n+1)\,)-A(\omega\upharpoonright n\,)\big) = \sum_n D_{\mathscr{F}_{n+1}}(\nu \mid \mu)(\omega)<\infty
 \end{equation*}

The proof for Schnorr randomness is the same. In the forward direction, the hypothesis that $\nu\abcon_{klc} \mu$ implies that $\sup_n \mathbb{E}_{\nu} A_n$ is finite and computable, and hence that $f$ is a Schnorr $L_1(\nu)$ test. In the backward direction, the hypothesis that  $f$ is a Schnorr $L_1(\nu)$ test implies and (\ref{prop:awesome:4v2}) and (\ref{eqn:refertome}) implies that $\nu\abcon_{klc} \mu$ in addition to $\nu\abcon_{kl} \mu$.
\end{proof}

\section{Hellinger distance and proof of Corollary~\ref{corvovkish}}\label{sec:proof:hellinger}

First, we note the following formulas for the Hellinger affinity and squared Hellinger distance in the weak setting:
\begin{prop}\label{prop:formula:hellinger}
\begin{align*}
\alpha_{\mathscr{F}_{n+1}}(\nu,\mu)(\omega)& = \sum_{\iota\in \{0,1\}} \sqrt{\nu(\omega\upharpoonright n\, \iota \mid \omega\upharpoonright n) \cdot \mu(\omega\upharpoonright n\, \iota \mid \omega\upharpoonright n)} \\
H^2_{\mathscr{F}_{n+1}}(\nu,\mu)(\omega) & = 2\big( 1- \sum_{\iota\in \{0,1\}} \sqrt{\nu(\omega\upharpoonright n\, \iota \mid \omega\upharpoonright n) \cdot \mu(\omega\upharpoonright n\, \iota \mid \omega\upharpoonright n)}\big) 
\end{align*}
\end{prop}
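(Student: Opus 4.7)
The plan is to derive both formulas directly from Proposition~\ref{prop:derivativecalc} together with the definitions of the Hellinger affinity and squared Hellinger distance from Definition~\ref{defn:threedistance}. Since $\nu$ and $\mu$ are assumed to have full support, the conditional measures $\nu(\cdot\mid \mathscr{F}_n)(\omega) \upharpoonright \mathscr{F}_{n+1}$ and $\mu(\cdot\mid \mathscr{F}_n)(\omega) \upharpoonright \mathscr{F}_{n+1}$ are well-defined and the former is absolutely continuous with respect to the latter, so Proposition~\ref{prop:derivativecalc} applies.

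First, I would extract a square root of the Radon-Nikodym derivative. The key observation is that $X_n^0$ and $X_n^1$ are indicator functions of the disjoint basic clopens $[\omega\upharpoonright n\, 0]$ and $[\omega\upharpoonright n\, 1]$, which partition $[\omega\upharpoonright n\,]$. Hence the expression from Proposition~\ref{prop:derivativecalc} is a two-valued simple function whose pointwise square root is obtained by simply square-rooting each coefficient:
\begin{equation*}
\sqrt{\frac{d(\nu(\cdot \mid \mathscr{F}_n)(\omega) \upharpoonright \mathscr{F}_{n+1})}{d(\mu(\cdot \mid \mathscr{F}_n)(\omega) \upharpoonright \mathscr{F}_{n+1})}}(\omega^{\prime}) = \sum_{\iota\in \{0,1\}} \sqrt{\frac{\nu(\omega\upharpoonright n\,\iota\mid \omega\upharpoonright n\,)}{\mu(\omega\upharpoonright n\,\iota\mid \omega\upharpoonright n\,)}} \cdot X_n^{\iota}(\omega^{\prime}).
\end{equation*}

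Next, I would integrate this against $\mu(\cdot\mid \mathscr{F}_n)(\omega) \upharpoonright \mathscr{F}_{n+1}$ to compute $\alpha_{\mathscr{F}_{n+1}}(\nu,\mu)(\omega)$. Since $\mathbb{E}_{\mu(\cdot\mid \omega\upharpoonright n)}[X_n^{\iota}] = \mu(\omega\upharpoonright n\, \iota \mid \omega\upharpoonright n\,)$, the factor produced by taking the expectation cancels the denominator under the square root, leaving the symmetric expression $\sqrt{\nu(\omega\upharpoonright n\, \iota \mid \omega\upharpoonright n\,) \cdot \mu(\omega\upharpoonright n\, \iota \mid \omega\upharpoonright n\,)}$. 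Summing over $\iota$ gives the first displayed formula. The formula for $H^2_{\mathscr{F}_{n+1}}(\nu,\mu)(\omega)$ then follows immediately from the identity $H^2 = 2(1-\alpha)$ built into Definition~\ref{defn:threedistance}.

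I do not expect any genuine obstacle here; the computation is a direct unwinding of Proposition~\ref{prop:derivativecalc} in the Cantor-space weak-horizon case, paralleling the treatment of the Kullback-Leibler divergence in Proposition~\ref{prop:calculatekullback}. The only small subtlety is the passage of the square root through the indicator decomposition, which is legitimate precisely because $X_n^0$ and $X_n^1$ have disjoint supports whose $\mu$-probabilities sum to one.
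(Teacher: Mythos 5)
Your proposal is correct and follows essentially the same route as the paper: square-root the simple-function expression for the Radon--Nikodym derivative from Proposition~\ref{prop:derivativecalc} (legitimate since the two indicators have disjoint supports and sum to one), integrate against $\mu(\cdot\mid\mathscr{F}_n)(\omega)$ so the denominator cancels, and apply $H^2=2(1-\alpha)$. One harmless imprecision: $X_n^{\iota}$ is the indicator of $\{\omega^{\prime}:\omega^{\prime}(n)=\iota\}$ rather than of $[\omega\upharpoonright n\,\iota]$, but these agree $\mu(\cdot\mid[\omega\upharpoonright n])$-almost surely, so the computation is unaffected.
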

\begin{proof}
By definition of Hellinger affinity (cf. Definition~\ref{defn:threedistance}(\ref{defn:threedistance:2})) and by  (\ref{eqn:thedervweak}):
\begin{align*}
 \alpha_{\mathscr{F}_{n+1}}(\nu,\mu)(\omega) & = \int \sqrt{\frac{d(\nu(\cdot \mid \mathscr{F}_n)(\omega) \upharpoonright \mathscr{F}_{n+1})}{d(\mu(\cdot \mid \mathscr{F}_n)(\omega) \upharpoonright \mathscr{F}_{n+1})}(\omega^{\prime})} \; d(\mu(\cdot \mid \mathscr{F}_n)(\omega))(\omega^{\prime}) \\
=&  \frac{1}{\mu([\omega\upharpoonright n])} \int_{[\omega\upharpoonright n]} \sqrt{\frac{d(\nu(\cdot \mid \mathscr{F}_n)(\omega) \upharpoonright \mathscr{F}_{n+1})}{d(\mu(\cdot \mid \mathscr{F}_n)(\omega) \upharpoonright \mathscr{F}_{n+1})}(\omega^{\prime})} \; d\mu(\omega^{\prime}) \\
=&   \frac{1}{\mu([\omega\upharpoonright n])} \cdot \bigg (\sum_{\iota\in \{0,1\}} \sqrt{\frac{\nu(\omega\upharpoonright n\, \iota \mid \omega\upharpoonright n)}{\mu(\omega\upharpoonright n\, \iota \mid \omega\upharpoonright n)}} \cdot \mu([\omega\upharpoonright n\, \iota])\bigg) \\ 
=&  \sum_{\iota\in \{0,1\}} \sqrt{\nu(\omega\upharpoonright n\, \iota \mid \omega\upharpoonright n) \cdot \mu(\omega\upharpoonright n\, \iota \mid \omega\upharpoonright n)}
\end{align*}
Then we are done since the Hellinger affinity $\alpha$ and Hellinger distance $H$ are related by the equation $2(1-\alpha) = H^2$.
\end{proof}

Second we show the consequence of the Kabanov-Lipcer-Shiryaev results  mentioned in \S\ref{sec:intro} (see \cite{Kabanov1977-yr} and \cite[Theorem 4, pp. 169-171]{Shiryaev2019-rh}). This is a classical result, and no assumptions are made about the computability of the measures.
\thmshiryaev*
\begin{proof}
If $\mathscr{G}_n$ is an increasing sequence of sub-$\sigma$-algebras of the Borel $\sigma$-algebra whose union generates the Borel $\sigma$-algebra, then we say that $\nu$ is \emph{locally absolutely continuous} with respect to $\mu$, abbreviated $\nu\abcon^{loc} \mu$, if $\nu\upharpoonright \mathscr{G}_n\abcon \mu\upharpoonright \mathscr{G}_n$ for all $n\geq 0$. Assuming $\nu\abcon^{loc} \mu$, Kabanov-Lipcer-Shiryaev show that $\nu\abcon \mu$ iff for $\nu$-a.s. many $\omega$ one has: 
\begin{equation}
\sum_n \bigg(1-\mathbb{E}_{\mu}\left[\sqrt{\frac{d(\nu \upharpoonright \mathscr{G}_{n+1})}{d(\mu \upharpoonright \mathscr{G}_{n+1})}\Big/\frac{d(\nu \upharpoonright \mathscr{G}_n)}{d(\mu \upharpoonright \mathscr{G}_n)}} \, \Big| \, \mathscr{G}_n\right](\omega)\bigg)<\infty 
\end{equation}

Since we are restricting to measures $\mu,\nu$ on Cantor space which have full support, we trivially have $\nu\upharpoonright \mathscr{F}_n\abcon \mu\upharpoonright \mathscr{F}_n$ for all $n\geq 0$, where again $\mathscr{F}_n$ is the $\sigma$-algebra generated by the basic clopens associated to the length $n$-strings. Hence, the Kabanov-Lipcer-Shiryaev theorem then implies that $\nu\abcon \mu$ iff for $\nu$-a.s. many $\omega$ one has: 
\begin{equation}\label{eqn:bigshiryaevthing}
\sum_n \bigg(1-\mathbb{E}_{\mu}\left[\sqrt{\frac{d(\nu \upharpoonright \mathscr{F}_{n+1})}{d(\mu \upharpoonright \mathscr{F}_{n+1})} \Big/ \frac{d(\nu \upharpoonright \mathscr{F}_n)}{d(\mu \upharpoonright \mathscr{F}_n)}} \, \Big| \, \mathscr{F}_n \right](\omega)\bigg)<\infty  
\end{equation}
Hence, it suffices to show that this sum is equal to $\sum_n \frac{1}{2} H^2_{\mathscr{F}_{n+1}}(\mu,\nu)(\omega)$, since the presence of the multiplicative constant $\frac{1}{2}$ does not affect convergence of the sum.

For any $k\geq 0$ one has $\big(\nicefrac{d(\nu \upharpoonright \mathscr{F}_k)}{d(\mu \upharpoonright \mathscr{F}_k)}\big)(\omega^{\prime}) = \frac{\nu(\omega^{\prime}\upharpoonright k)}{\mu(\omega^{\prime}\upharpoonright k)}$. Hence for any $n\geq 0$:
\begin{equation}
\left(\frac{d(\nu \upharpoonright \mathscr{F}_{n+1})}{d(\mu \upharpoonright \mathscr{F}_{n+1})} \Big/ \frac{d(\nu \upharpoonright \mathscr{F}_n)}{d(\mu \upharpoonright \mathscr{F}_n)}\right)(\omega^{\prime})=\frac{\nu(\omega^{\prime}\upharpoonright (n+1))}{\mu(\omega^{\prime}\upharpoonright (n+1))} \Big/ \frac{\nu(\omega^{\prime}\upharpoonright n)}{\mu(\omega^{\prime}\upharpoonright n)} = \frac{\nu(\omega^{\prime}\upharpoonright (n+1) \mid \omega^{\prime}\upharpoonright n)}{\mu(\omega^{\prime}\upharpoonright (n+1) \mid \omega^{\prime}\upharpoonright n)} 
\end{equation}
Then one has the following expression for the conditional expectation in (\ref{eqn:bigshiryaevthing}):
\begin{align}
& \mathbb{E}_{\mu}\left[\sqrt{\frac{d(\nu \upharpoonright \mathscr{F}_{n+1})}{d(\mu \upharpoonright \mathscr{F}_{n+1})} \Big/ \frac{d(\nu \upharpoonright \mathscr{F}_n)}{d(\mu \upharpoonright \mathscr{F}_n)}} \, \Big| \mathscr{F}_n \right](\omega) \notag \\
= & \frac{1}{\mu([\omega\upharpoonright n])} \int_{[\omega\upharpoonright n]} \sqrt{\frac{\nu(\omega^{\prime}\upharpoonright (n+1) \mid \omega^{\prime}\upharpoonright n)}{\mu(\omega^{\prime}\upharpoonright (n+1) \mid \omega^{\prime}\upharpoonright n)}} \; d\mu(\omega^{\prime}) \notag \\
= & \sum_{\iota\in \{0,1\}}  \frac{1}{\mu([\omega\upharpoonright n])} \cdot \sqrt{\frac{\nu(\omega\upharpoonright n\, \iota \mid \omega\upharpoonright n)}{\mu(\omega\upharpoonright n\, \iota \mid \omega\upharpoonright n)}} \cdot \mu([\omega\upharpoonright n\, \iota])\notag  \\
= & \sum_{\iota\in \{0,1\}} \sqrt{\nu(\omega\upharpoonright n\, \iota \mid \omega\upharpoonright n) \cdot \mu(\omega\upharpoonright n\, \iota \mid \omega\upharpoonright n)} \notag \\
= &  \alpha_{\mathscr{F}_{n+1}}(\nu,\mu)(\omega) \notag
\end{align}
where the last line follows from the previous proposition.  Then we are done since, again, the Hellinger distance $H$ and the Hellinger affinity $\alpha$ are related by the equation $1-\alpha = \frac{1}{2} H^2$.
\end{proof}

At the end of his paper \cite{Vovk1987}, Vovk cites the Kabanov-Lipcer-Shiryaev paper \cite{Kabanov1977-yr}, and its later presentation in Shiryaev's book (the most recent edition being \cite[Theorem 4 pp. 169-171]{Shiryaev2019-rh}). Vovk writes: ``A related result\textemdash a criterion of absolute continuity and singularity of probability measures in `predictable' terms\textemdash has been obtained in probability theory'' (\cite[p. 5]{Vovk1987}). In addition to being useful, Theorem~\ref{thmshiryaev} is our attempt to distill the connection between Kabanov-Lipcer-Shiryaev's work and Vovk's Theorem~\ref{Vovk}.

Using Vovk's Theorem~\ref{Vovk} and Theorem~\ref{thmshiryaev}, we can verify the following component of Diagram~\ref{eqn:abcondiagram}
\begin{prop}\label{prop:shiryaev}
If  $\nu\abcon_{\mathsf{MLR}} \mu$ then $\nu\abcon \mu$.
\end{prop}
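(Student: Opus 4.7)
The plan is to combine Vovk's Theorem~\ref{Vovk} with the Kabanov-Lipcer-Shiryaev criterion recorded as Theorem~\ref{thmshiryaev}, using the fact that the Martin-L\"of random points form a $\nu$-measure-one set. The target is absolute continuity of $\nu$ with respect to $\mu$, and Theorem~\ref{thmshiryaev} reduces this, under our full-support assumption, to the $\nu$-a.s. convergence of $\sum_n H^2_{\mathscr{F}_{n+1}}(\mu,\nu)(\omega)$. Since the Hellinger distance is symmetric, this is the same as $\sum_n H^2_{\mathscr{F}_{n+1}}(\nu,\mu)(\omega)<\infty$ $\nu$-a.s., which is exactly the quantity controlled by Vovk's theorem.

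Concretely, I would proceed as follows. Assume $\nu\abcon_{\mathsf{MLR}}\mu$, so by Definition~\ref{defn:mlrcontain} every $\omega\in \mathsf{MLR}^{\nu}$ is also in $\mathsf{MLR}^{\mu}$. Applying the forward direction of Vovk's Theorem~\ref{Vovk} to such an $\omega$ (both measures are computable with full support, which satisfies the hypotheses of Theorem~\ref{Vovk}), one obtains $\sum_n H^2_{\mathscr{F}_{n+1}}(\nu,\mu)(\omega)<\infty$. Because $\mathsf{MLR}^{\nu}$ has $\nu$-measure one, this convergence holds for $\nu$-a.s. many $\omega$. Invoking the symmetry of $H$ and then Theorem~\ref{thmshiryaev} (with the roles of $\mu$ and $\nu$ in the statement matching via symmetry) yields $\nu\abcon\mu$.

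The main thing to be careful about is the directionality: Vovk's theorem, as stated in Theorem~\ref{Vovk}, requires $\omega\in \mathsf{MLR}^{\nu}$ in order to conclude that Hellinger-summability is equivalent to $\omega\in \mathsf{MLR}^{\mu}$, and Theorem~\ref{thmshiryaev} is a statement about the $\nu$-measure-one set where Hellinger-summability occurs. The hypothesis $\nu\abcon_{\mathsf{MLR}}\mu$ exactly bridges these two: it turns ``points in $\mathsf{MLR}^{\nu}$'' into ``points in $\mathsf{MLR}^{\mu}$,'' which is what Vovk's criterion needs as input. Beyond this bookkeeping and noting the symmetry of $H$, there is no real obstacle; the proof should fit in a few lines.
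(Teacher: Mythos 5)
Your proof is correct and is essentially identical to the paper's own argument: both apply the forward direction of Vovk's Theorem~\ref{Vovk} to the $\nu$-measure-one set $\mathsf{MLR}^{\nu}$ (whose points lie in $\mathsf{MLR}^{\mu}$ by hypothesis) and then conclude via the backward direction of Theorem~\ref{thmshiryaev}. The remark about the symmetry of $H$ is a harmless bit of bookkeeping that the paper leaves implicit.
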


Note that this also follows from the proof of \cite[Proposition 39(a)]{Bienvenu2009-jf}. That proposition is stated in terms of mutual absolute continuity, but the proof gives the result for one-sided absolute continuity.

\begin{proof}
Suppose that $\nu\abcon_{\mathsf{MLR}} \mu$. Then by the forward direction of Vovk's Theorem~\ref{Vovk}, every $\omega$ in the $\nu$-measure one set $\mathsf{MLR}^{\nu}$ satisfies $\sum_n H^2_{\mathscr{F}_{n+1}}(\mu,\nu)(\omega)<\infty$. Then we are done by the backwards direction of Theorem~\ref{thmshiryaev}.
\end{proof}

Now we turn towards finishing Corollary~\ref{corvovkish}. We first prove a preliminary proposition:
\begin{prop}\label{prop:easyconstruction}
Suppose that $f:2^{\mathbb{N}}\rightarrow [0,\infty]$ is a Martin-L\"of $L_1(\nu)$ test. Then there is a computable $\mu$ such that for all $\omega$ in $2^{\mathbb{N}}$ one has $f(\omega)=\sum_n H^2_{\mathscr{F}_{n+1}}(\mu,\nu)(\omega)$.
\end{prop}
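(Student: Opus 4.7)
The plan is to mimic the structure of the proof of Theorem~\ref{thm:main:mlr:weak}, substituting Proposition~\ref{prop:formula:hellinger} for Proposition~\ref{prop:calculatekullback}. First I would apply Proposition~\ref{prop:frommlrtest2athing} to $f$ with some fixed rational $\epsilon \in (0,\, 2-\sqrt{2})$, producing an increasing computable dyadic $A : 2^{<\mathbb{N}} \to \mathbb{Q}^{\geq 0}$ with $A(\emptyset)=0$, $A(\sigma 0) = A(\sigma 1)$, $A(\sigma\iota) - A(\sigma) < \epsilon$, and $f(\omega) = \sup_n A(\omega\upharpoonright n)$. The bound $\epsilon < 2-\sqrt{2}$ will guarantee solvability of the Hellinger matching equation at every~$\sigma$.

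Next I would construct a computable probability measure $\mu$ with full support by recursion on the length of $\sigma$, choosing $\mu(\sigma 0 \mid \sigma)$ so that the Hellinger squared increment at $\sigma$ matches the $A$-increment. By Proposition~\ref{prop:formula:hellinger}, this amounts to solving for $x = \mu(\sigma 0 \mid \sigma) \in (0,1)$ in
\begin{equation*}
\sqrt{\nu(\sigma 0 \mid \sigma)\cdot x} + \sqrt{\nu(\sigma 1 \mid \sigma)(1-x)} = 1 - \tfrac{1}{2}\bigl(A(\sigma\iota) - A(\sigma)\bigr).
\end{equation*}
Parametrizing by angles with $\nu(\sigma 0 \mid \sigma) = \sin^2 \phi$, $x = \sin^2 \theta$ and $\alpha = \arccos\bigl(1 - \tfrac{1}{2}(A(\sigma\iota)-A(\sigma))\bigr)$, the equation becomes $\cos(\phi - \theta) = \cos \alpha$, whose solutions are $\theta = \phi \pm \alpha$. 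Since $A(\sigma\iota) - A(\sigma) < 2-\sqrt{2}$ forces $\alpha < \pi/4$ and $\nu$ having full support forces $\phi \in (0, \pi/2)$, a short case split ($\phi \leq \pi/4$ versus $\phi > \pi/4$) shows that at least one of $\phi + \alpha < \pi/2$ or $\phi - \alpha > 0$ must hold, yielding a corresponding $\theta \in (0, \pi/2)$ and hence $x \in (0,1)$.

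The main obstacle will be to make this choice of root uniformly computable in $\sigma$, since the boundary events $\phi + \alpha = \pi/2$ and $\phi = \alpha$ cannot be decided from approximations to $\nu(\sigma 0 \mid \sigma)$ alone. I plan to handle this by dovetailing: semi-decide in parallel the strict inequalities $A(\sigma\iota) - A(\sigma) < 2 - 2\sqrt{\nu(\sigma 0 \mid \sigma)}$ and $A(\sigma\iota) - A(\sigma) < 2 - 2\sqrt{\nu(\sigma 1 \mid \sigma)}$; the bound $\epsilon < 2 - \sqrt{2}$ ensures that at least one of these is strictly true and therefore semi-decided in finite time. On the interval $[\nu(\sigma 0 \mid \sigma),1]$ or $[0, \nu(\sigma 0 \mid \sigma)]$ singled out by the first confirmed inequality, the concave function $x \mapsto \sqrt{\nu(\sigma 0 \mid \sigma)\cdot x} + \sqrt{\nu(\sigma 1 \mid \sigma)(1-x)}$ is strictly monotonic, so the effective intermediate value theorem (invoked as in Proposition~\ref{prop:iamcomplicated2}) yields a unique computable root $x \in (0,1)$ uniformly in $\sigma$.

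Finally, the conclusion $f(\omega) = \sum_n H^2_{\mathscr{F}_{n+1}}(\mu, \nu)(\omega)$ follows by telescoping: by construction each summand equals $A(\omega\upharpoonright(n+1)) - A(\omega\upharpoonright n)$, and since $A(\emptyset)=0$ and $\sup_n A(\omega\upharpoonright n) = f(\omega)$, the partial sums converge to $f(\omega)$ for every $\omega \in 2^{\mathbb{N}}$.
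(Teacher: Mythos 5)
Your proposal is correct and is essentially the paper's own proof: the paper likewise applies Proposition~\ref{prop:frommlrtest2athing} with $\epsilon = 2(1-\sqrt{\nicefrac{1}{2}})$, recursively defines $\mu$ by matching the Hellinger increment $2\bigl(1-\sum_{\jmath}\sqrt{\mu(\sigma\jmath\mid\sigma)\nu(\sigma\jmath\mid\sigma)}\bigr)$ to $A(\sigma\iota)-A(\sigma)$, selects the branch by enumerating the c.e.\ conditions $\sqrt{\nu(\sigma\jmath\mid\sigma)}<c$ until one fires (your dovetailing), and solves for the conditional probability by the effective Intermediate Value Theorem on the monotone piece of $x\mapsto\sqrt{px}+\sqrt{(1-x)(1-p)}$, before telescoping; your trigonometric parametrization is only a cosmetic variant of the paper's direct endpoint evaluation. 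The one step to make explicit is the degenerate case $A(\sigma\iota)=A(\sigma)$ (i.e.\ $c=1$), where the root sits at the endpoint and there is no sign change for the IVT; since $A$ is rational-valued and computable this is decidable, and the paper handles it by exactly the case-break you already cite from Proposition~\ref{prop:iamcomplicated2}.
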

\begin{proof}
By Proposition~\ref{prop:frommlrtest2athing}, there is a computable increasing dyadic function $A:2^{<\mathbb{N}}\rightarrow \mathbb{Q}^{\geq 0}$ such that $f(\omega)=\sup_n A_n(\omega)$ and $A(\emptyset)=0$ and $A(\sigma 0)=A(\sigma 1)$ for all $\sigma$ in $2^{<\mathbb{N}}$ and further $A(\sigma\iota)-A(\sigma)<\epsilon$ for all $\sigma$ in $2^{<\mathbb{N}}$ and $\iota$ in $\{0,1\}$, where we fix $\epsilon:=2(1-\sqrt{\nicefrac{1}{2}})$. We define $\mu$ by defining $\mu(\sigma)$ recursively on length of $\sigma$ so that it satisfies the following for each $\iota$ in $\{0,1\}$:
\begin{equation}\label{prop:whatwewantasigmaiota}
A(\sigma\iota)-A(\sigma)=2(1-\sum_{\jmath \in \{0,1\}} \sqrt{\mu(\sigma\jmath \mid \sigma)\nu(\sigma\jmath \mid \sigma)}\,)
\end{equation}
Let $\sigma,\iota$ be fixed and suppose we have already defined $\mu(\sigma)$. Define rational
\begin{equation*}
c=1- \frac{1}{2}(A(\sigma\iota)-A(\sigma))
\end{equation*}
Note by our choice of $\epsilon>0$, we have that $c$ is in the interval $(\sqrt{\nicefrac{1}{2}},1]$. Then either $\sqrt{\nu(\sigma 0\mid \sigma)}<c$ or $\sqrt{\nu(\sigma 1\mid \sigma)}<c$ or both. Since these are c.e. relations, enumerate them until one finds $\jmath$ in $\{0,1\}$ with $\sqrt{\nu(\sigma \jmath\mid \sigma)}<c$. Define computable real $p=\nu(\sigma \jmath\mid \sigma)$ so that $\sqrt{p}<c$. Consider the computable continuous $g:[p,1]\rightarrow \mathbb{R}$ defined by
\begin{equation*}
g(x)=\sqrt{xp}+\sqrt{(1-x)(1-p)}
\end{equation*}
One has $g(p)=1\geq c>\sqrt{p}=g(1)$. Further, since $c$ is rational, we can compute whether $c=1$. If so, then we choose $x=p$, so that we have $g(x)=c$. If not, then since $g(p)>c>g(1)$ and since $g$ is strictly decreasing on $(p,1)$, by the effective Intermediate Value Theorem (cf. footnote~\ref{fn:effivt}), we can compute $x$ in the interval $(p,1)$ such that $g(x)=c$. Hence in either case, we can compute $x$ in the interval $[p,1)$ such that $g(x)=c$; and the case break is effective. Then we define $\mu(\sigma\jmath \mid \sigma)=x$ and $\mu(\sigma(1-\jmath) \mid \sigma)=1-x$. This finishes the construction of $\mu$ such that (\ref{prop:whatwewantasigmaiota}) holds. Then by $A(\emptyset)=0$ and $A$ being non-negative, we can telescope to obtain 
\begin{equation*}
f(\omega) = \sum_n (A_{n+1}(\omega)-A_n(\omega)) = \sum_n H^2_{\mathscr{F}_{n+1}}(\mu,\nu)(\omega).
\end{equation*}
where the last equation follows from (\ref{prop:whatwewantasigmaiota}) and Proposition~\ref{prop:formula:hellinger}. 
\end{proof}

\begin{proofdetail}
To see that $g$ is strictly decreasing on $(p,1)$, it suffices to show that its derivative is negative on $(p,1)$. Its derivative is:
\begin{equation*}
g^{\prime}(x)=\frac{1}{2} \frac{\sqrt{p}}{\sqrt{x}} - \frac{1}{2} \frac{\sqrt{1-p}}{\sqrt{1-x}}
\end{equation*}
For $0<x<1$ one has: $g^{\prime}(x)<0$ iff $ \frac{\sqrt{p}}{\sqrt{x}}<\frac{\sqrt{1-p}}{\sqrt{1-x}}$ iff $\sqrt{p}\sqrt{1-x}<\sqrt{1-p}\sqrt{x}$ iff $p(1-x)<(1-p)x$ iff $p-px<x-px$ iff $p<x$. Hence indeed $g^{\prime}(x)<0$ for $x$ in $(p,1)$.
\end{proofdetail}

\corvovkish*
\begin{proof}
The forward inclusion follows from the forward direction of Vovk's Theorem~\ref{Vovk}. For the backward inclusion, suppose that $\omega$ is in $\mathsf{MR}^{\nu}_2(\abcon_\mathsf{MLR}, \mathscr{F}_{n+1},H)$. To show that $\omega$ is in $\mathsf{MLR}^{\nu}$, let $f$ be an $L_1(\nu)$ Martin-L\"of test; we must show that $f(\omega)<\infty$. Let $\mu$ be as in Proposition~\ref{prop:easyconstruction}. Then we note that every $\omega^{\prime}$ in $\mathsf{MLR}^{\nu}$ is such that $f(\omega^{\prime})<\infty$ and hence $\sum_n H^2_{\mathscr{F}_{n+1}}(\mu,\nu)(\omega^{\prime})<\infty$, and hence by the backwards direction of Vovk's Theorem, $\omega^{\prime}$ is in $\mathsf{MLR}^{\mu}$. Then $\nu\abcon_{\mathsf{MLR}} \mu$ and since $\omega$ is in $\mathsf{MR}^{\nu}_2(\abcon_\mathsf{MLR}, \mathscr{F}_{n+1},H)$, we have that $\sum_n H^2_{\mathscr{F}_{n+1}}(\mu,\nu)(\omega)<\infty$, and so $f(\omega)<\infty$.
\end{proof}

\section{Total variational distance}\label{sec:totalvariational}

Now we shift attention to the total variational distance.  The following notion features in the statement of our main result for this setting, Theorem~\ref{thm:crzero}.

\begin{defn}\label{defn:mild}
A point $\omega$ is $\nu$-\emph{mild} if $\liminf_n \nu(\omega\upharpoonright (n+1)\mid \omega\upharpoonright n)>0$, and we abbreviate this as $\mathsf{Mild}^{\nu}$.  
\end{defn}

Note that, since we are restricting attention to probability measures $\nu$ with full support, one has that $0<\nu(\omega\upharpoonright (n+1)\mid \omega\upharpoonright n)<1$. Hence, this definition is equivalent to one with the liminf replaced by inf.

For Bernoulli measures $\nu$ or strongly positive Bernoulli measures $\nu$ \cite[Definition 20]{Bienvenu2009-jf} all sequences are $\nu$-mild. In the general case, mildness is not universally a measure-one property.\footnote{For example, let $X_1, X_2, \hdots$ be a sequence random variables with $X_n:2^{\mathbb{N}} \to \{0,1\}$ and $X_n(\omega) = 1$ if $\omega(n) =1$ and $X_n(\omega) = 0$ otherwise. Suppose that the $X_n$ are independently distributed with $\mu(X_n) = \frac{1}{n+1}$. Let $\mu^\infty$ be the infinite product measure $(\mu\circ X_1^{-1}) \times (\mu\circ X_2^{-1}) \times \cdots$, which is computable since $\mu\circ X_n^{-1}$ is uniformly computable. Furthermore, $\mu^\infty$ has full support. Then, by the second Borel-Cantelli lemma, $X_n = 1$ infinitely often with $\mu^\infty$ probability one. But any sample $\omega$ which satisfies $X_n(\omega)=1$ infinitely often cannot be $\mu^{\infty}$-mild since $\mu^\infty(X_{n+1} = 1 \mid X_1, \hdots, X_n) = \mu(X_{n+1} = 1) \to 0$ as $n \to \infty$. We thank an anonymous reviewer for pointing this out as a counterexample to a result about mildness we had previously asserted.}

In the Cantor space setting, one has the following formula for the weak total variational distance:
\begin{prop}\label{prop:equiv:char:bdweak1}
Suppose $\mu,\nu,\omega$ are given. Then
\begin{equation}\label{eqn:tvweakformulacantor}
T_{\mathscr{F}_{n+1}}(\mu,\nu)(\omega)=\left| \mu( \omega \upharpoonright (n+1) \mid  \omega \upharpoonright n) - \nu( \omega \upharpoonright (n+1) \mid  \omega \upharpoonright n)\right|
\end{equation}
\end{prop}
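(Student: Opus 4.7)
The plan is to unfold the definitions and exploit the fact that on Cantor space the sub-$\sigma$-algebra $\mathscr{F}_{n+1}$ has only finitely many atoms, and only two of them lie inside the basic clopen $[\omega\upharpoonright n]$. By Definition~\ref{defn:therv} together with Definition~\ref{defn:threedistance}(\ref{defn:threedistance:1}),
\[
T_{\mathscr{F}_{n+1}}(\mu,\nu)(\omega) = \sup_{A\in \mathscr{F}_{n+1}} \bigl|\mu(A\mid [\omega\upharpoonright n]) - \nu(A\mid [\omega\upharpoonright n])\bigr|,
\]
using the preferred version of the conditional probability from~(\ref{eqn:preferredconditional}).

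Next, I would note that every $A\in \mathscr{F}_{n+1}$ is a union of length-$(n+1)$ cylinders, and that $\mu(\cdot\mid[\omega\upharpoonright n])$ and $\nu(\cdot\mid[\omega\upharpoonright n])$ both assign mass zero outside of $[\omega\upharpoonright n]$. Hence the only contributions to $|\mu(A\mid[\omega\upharpoonright n])-\nu(A\mid[\omega\upharpoonright n])|$ come from $A\cap[\omega\upharpoonright n]$, which can only be one of the four sets $\emptyset$, $[\omega\upharpoonright n\, 0]$, $[\omega\upharpoonright n\, 1]$, or $[\omega\upharpoonright n]$. The first and last give a difference of $0$, so the supremum is achieved on one of the two singleton-atom cases.

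Now I would invoke the full-support assumption on $\mu,\nu$ to ensure the conditional probabilities are well-defined, and observe that
\[
\mu(\omega\upharpoonright n\, 0\mid \omega\upharpoonright n)+\mu(\omega\upharpoonright n\, 1\mid \omega\upharpoonright n)=1=\nu(\omega\upharpoonright n\, 0\mid \omega\upharpoonright n)+\nu(\omega\upharpoonright n\, 1\mid \omega\upharpoonright n).
\]
Subtracting, the two differences $\mu(\omega\upharpoonright n\,\iota\mid\omega\upharpoonright n)-\nu(\omega\upharpoonright n\,\iota\mid\omega\upharpoonright n)$ for $\iota\in\{0,1\}$ are negatives of one another, so they agree in absolute value. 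Since $\omega\upharpoonright(n{+}1)$ is exactly one of the two strings $\omega\upharpoonright n\, 0$ or $\omega\upharpoonright n\, 1$, the common supremum value is precisely $\bigl|\mu(\omega\upharpoonright(n{+}1)\mid\omega\upharpoonright n)-\nu(\omega\upharpoonright(n{+}1)\mid\omega\upharpoonright n)\bigr|$, establishing~(\ref{eqn:tvweakformulacantor}).

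There is no real obstacle here; the proof is essentially a bookkeeping exercise to verify that on the two-atom quotient algebra $\mathscr{F}_{n+1}\cap[\omega\upharpoonright n]$ the total variational distance between two probability measures with matching total mass reduces to the absolute difference on either atom. The only subtlety worth flagging is the need to justify replacing the supremum over all of $\mathscr{F}_{n+1}$ by a maximum over the finitely many atoms, which is immediate once one notes that a sum of nonnegative terms with opposite signs between the two measures is always dominated by the larger summand in absolute value.
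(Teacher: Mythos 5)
Your proposal is correct and follows essentially the same route as the paper's proof: reduce the supremum over $\mathscr{F}_{n+1}$ to the four possible values of $A\cap[\omega\upharpoonright n]$ and observe that the two nontrivial cases give the same absolute difference because the conditional probabilities of the two one-step extensions sum to one under each measure. You merely make explicit the sign-flip step that the paper leaves implicit; no gap.
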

\begin{proof}
For an event $A$ in $\mathscr{F}_{n+1}$ and for a string $\sigma$ with $\left|\sigma\right|=n$, one has that $A\cap  [\sigma]$ is either empty or equal to~$[\sigma]$ or equal to~$[\sigma \iota]$ for some $\iota\in \{0,1\}$. Letting $\sigma =\omega\upharpoonright n$, one has that in the first two cases the quantity $\left|\mu(A\mid \mathscr{F}_n)(\omega)  - \nu(A\mid \mathscr{F}_n)(\omega)\right|$ is zero, while in the third case this quantity is equal to the right-hand side of (\ref{eqn:tvweakformulacantor}).
\end{proof}

\begin{prop}\label{prop:equivalentliminf}
The following are equivalent for $\omega$ in $\mathsf{Mild}^{\nu}$:
\begin{enumerate}[leftmargin=*]
    \item \label{prop:equiv:char:bdweak1:1v2} $\lim_n T_{\mathscr{F}_{n+1}}(\mu,\nu)(\omega)=0$
    \item \label{prop:equiv:char:bdweak1:2v2} $\lim_n \left| \mu( \omega \upharpoonright (n+1) \mid  \omega \upharpoonright n) - \nu( \omega \upharpoonright (n+1) \mid  \omega \upharpoonright n)\right|=0$
    \item \label{prop:equiv:char:bdweak1:3v2} $\lim_n \frac{M_{n+1}(\omega)}{M_n(\omega)}=1$   
\end{enumerate}
where $M(\sigma)=\frac{\mu(\sigma)}{\nu(\sigma)}$ is the computable dyadic $\nu$-martingale. 
\end{prop}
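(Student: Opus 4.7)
The plan is to first dispatch the equivalence of (\ref{prop:equiv:char:bdweak1:1v2}) and (\ref{prop:equiv:char:bdweak1:2v2}), which is immediate from Proposition~\ref{prop:equiv:char:bdweak1}: that proposition gives the exact equality
\begin{equation*}
T_{\mathscr{F}_{n+1}}(\mu,\nu)(\omega)=\left|\mu(\omega\upharpoonright (n+1)\mid \omega\upharpoonright n)-\nu(\omega\upharpoonright (n+1)\mid \omega\upharpoonright n)\right|,
\end{equation*}
so the two limits coincide term by term and no hypothesis on $\omega$ is needed. Note that mildness plays no role in this step.

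The content lies in the equivalence of (\ref{prop:equiv:char:bdweak1:2v2}) and (\ref{prop:equiv:char:bdweak1:3v2}). I would invoke the likelihood ratio identity (\ref{rmk:martingalebd:eqn:redeux}) from Example~\ref{ex:martingale:1} to rewrite
\begin{equation*}
\frac{M_{n+1}(\omega)}{M_n(\omega)} = \frac{\mu(\omega\upharpoonright (n+1)\mid \omega\upharpoonright n)}{\nu(\omega\upharpoonright (n+1)\mid \omega\upharpoonright n)}.
\end{equation*}
Abbreviating $a_n=\mu(\omega\upharpoonright (n+1)\mid \omega\upharpoonright n)$ and $b_n=\nu(\omega\upharpoonright (n+1)\mid \omega\upharpoonright n)$, condition (\ref{prop:equiv:char:bdweak1:2v2}) reads $|a_n-b_n|\to 0$ and condition (\ref{prop:equiv:char:bdweak1:3v2}) reads $a_n/b_n\to 1$. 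The two are linked by the identity $a_n-b_n = b_n\bigl(\tfrac{a_n}{b_n}-1\bigr)$.

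For the direction (\ref{prop:equiv:char:bdweak1:3v2}) $\Rightarrow$ (\ref{prop:equiv:char:bdweak1:2v2}), since $b_n\in(0,1]$ by full support, we have $|a_n-b_n|\leq \left|\tfrac{a_n}{b_n}-1\right|\to 0$, and this direction requires no use of mildness. For (\ref{prop:equiv:char:bdweak1:2v2}) $\Rightarrow$ (\ref{prop:equiv:char:bdweak1:3v2}), rearrange as $\tfrac{a_n}{b_n}-1 = \tfrac{a_n-b_n}{b_n}$; the mildness hypothesis $\liminf_n b_n>0$ supplies a positive lower bound $c>0$ on $b_n$ for all sufficiently large $n$, so $\left|\tfrac{a_n}{b_n}-1\right|\leq c^{-1}|a_n-b_n|\to 0$. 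This is where mildness is essential, and it is the only genuine step in the argument.

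The main obstacle is simply bookkeeping: tracking where mildness enters and verifying that the full support assumption guarantees the ratios $a_n/b_n$ and $M_{n+1}/M_n$ are well defined throughout. Both of these are already built into our standing assumptions on $\mu,\nu$, so the proof is short.
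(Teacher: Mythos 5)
Your proposal is correct and follows essentially the same route as the paper: the $(1)\Leftrightarrow(2)$ step is read off from Proposition~\ref{prop:equiv:char:bdweak1}, and the $(2)\Leftrightarrow(3)$ step uses the likelihood-ratio identity \eqref{rmk:martingalebd:eqn:redeux} together with the bounds $0<\nu(\omega\upharpoonright(n+1)\mid\omega\upharpoonright n)\leq 1$ (for $(3)\Rightarrow(2)$) and the eventual positive lower bound supplied by mildness (for $(2)\Rightarrow(3)$). Your version just packages the paper's explicit $\epsilon$--$n_0$ bookkeeping into the single identity $a_n-b_n=b_n(a_n/b_n-1)$.
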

\begin{proof}
The equivalence of (\ref{prop:equiv:char:bdweak1:1v2})-(\ref{prop:equiv:char:bdweak1:2v2}) follows from Proposition~\ref{prop:equiv:char:bdweak1}.

Suppose (\ref{prop:equiv:char:bdweak1:2v2}); we show (\ref{prop:equiv:char:bdweak1:3v2}). Suppose $\omega$ is $\mathsf{Mild}^{\nu}$, so that $c>0$, where $c:=\liminf_n \nu( \omega \upharpoonright (n+1) \mid  \omega \upharpoonright n)$. Let $\epsilon>0$.  Choose $n_0\geq 0$ such that for all $n_1\geq n_0$ one has $\inf_{n\geq n_1} \nu( \omega \upharpoonright (n+1) \mid  \omega \upharpoonright n)>\frac{c}{2}$. So for all $n_1\geq n_0$ and all $n\geq n_1$ one has the bound $\frac{1}{\nu( \omega \upharpoonright (n+1) \mid  \omega \upharpoonright n)}<\frac{2}{c}$. From (\ref{prop:equiv:char:bdweak1:2v2}) applied to $\epsilon\cdot \frac{c}{2}$, choose $n_1\geq n_0$ such that for all $n\geq n_1$ one has that  $\left| \mu( \omega \upharpoonright (n+1) \mid  \omega \upharpoonright n) - \nu( \omega \upharpoonright (n+1) \mid  \omega \upharpoonright n)\right|<\epsilon \cdot \frac{c}{2}$. Let $n\geq n_1$. Then by dividing by the  quantity $0<\nu( \omega \upharpoonright (n+1) \mid  \omega \upharpoonright n)<1$ and using the previous bound, we have
$\left|\frac{ \mu( \omega \upharpoonright (n+1) \mid  \omega \upharpoonright n)}{ \nu( \omega \upharpoonright (n+1) \mid  \omega \upharpoonright n)}-1\right|< \epsilon\cdot \frac{c}{2} \cdot \frac{1}{\nu( \omega \upharpoonright (n+1) \mid  \omega \upharpoonright n)}<\epsilon$. Then using (\ref{rmk:martingalebd:eqn:redeux}) we are done. 

Suppose (\ref{prop:equiv:char:bdweak1:3v2}); we show (\ref{prop:equiv:char:bdweak1:2v2}). Let $\epsilon>0$. From $\lim_n \frac{M(\omega\upharpoonright (n+1))}{M(\omega\upharpoonright n)}=1$, choose $n_0\geq 0$ such that for all $n\geq n_0$ one has $\left|\frac{M(\omega\upharpoonright (n+1))}{M(\omega\upharpoonright n)}-1\right|<\epsilon$. By (\ref{rmk:martingalebd:eqn:redeux}), one has $\left|\frac{ \mu( \omega \upharpoonright (n+1) \mid  \omega \upharpoonright n)}{ \nu( \omega \upharpoonright (n+1) \mid  \omega \upharpoonright n)}-1\right|<\epsilon$. By multiplying by the quantity $0<\nu( \omega \upharpoonright (n+1) \mid  \omega \upharpoonright n)<1$, we have $\left| \mu( \omega \upharpoonright (n+1) \mid  \omega \upharpoonright n)- \nu( \omega \upharpoonright (n+1) \mid  \omega \upharpoonright n)\right|<\epsilon\cdot \nu( \omega \upharpoonright (n+1) \mid  \omega \upharpoonright n)< \epsilon$.
\end{proof}

\section{Computable randomness}\label{sec:cr}

Another traditional randomness notion, also due to Schnorr \cite{Schnorr1971aa}, is computable randomness. A point $\omega$ is \emph{computable $\nu$-random}, abbreviated $\mathsf{CR}^{\nu}$, if $\sup_n N(\omega\upharpoonright n)<\infty$ for all  computable non-negative dyadic $\nu$-martingales $N$. By formalizing the Upcrossing Lemma, one can show that $\omega$ in $\mathsf{CR}^{\nu}$ iff $\lim_n N(\omega\upharpoonright n)$ exists and is finite for all computable non-negative dyadic $\nu$-martingales~$N$ (the proof in \cite[Theorem 7.1.3]{Downey2010aa}, which follows \cite{Schn:1971}, generalizes to arbitrary computable probability measures on Cantor space). 

The following analogue of Vovk's Theorem~\ref{Vovk} for computable randomness is elementary but illustrative:
\begin{prop}\label{crabcon}
The following are equivalent for $\omega$ in $\mathsf{CR}^{\nu}$:
\begin{enumerate}[leftmargin=*]
    \item \label{crabcon:1} $\omega$ is in $\mathsf{CR}^{\mu}$.
    \item \label{crabcon:2} $\lim_n \frac{\mu(\omega\upharpoonright n)}{\nu(\omega\upharpoonright n)}$ exists and is finite and non-zero.
    \item \label{crabcon:2.2} $\lim_n \frac{\nu(\omega\upharpoonright n)}{\mu(\omega\upharpoonright n)}$ exists and is finite and non-zero.
    \item \label{crabcon:3} $\sup_n \frac{\nu(\omega\upharpoonright n)}{\mu(\omega\upharpoonright n)}<\infty$.
\end{enumerate}
\end{prop}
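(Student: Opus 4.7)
The plan is to use the convergence characterization of $\mathsf{CR}^{\nu}$ that the excerpt already states: $\omega\in\mathsf{CR}^{\nu}$ iff $\lim_n N(\omega\upharpoonright n)$ exists and is finite for every computable non-negative dyadic $\nu$-martingale $N$. The two martingales that do all the work are $M(\sigma)=\frac{\mu(\sigma)}{\nu(\sigma)}$ and $M^{\prime}(\sigma)=\frac{\nu(\sigma)}{\mu(\sigma)}$; by full support and Example~\ref{ex:martingale:1} these are, respectively, a computable non-negative dyadic $\nu$-martingale and a computable non-negative dyadic $\mu$-martingale. Throughout the proof, since $\omega\in\mathsf{CR}^{\nu}$ is standing, the convergence characterization applied to $M$ gives $L:=\lim_n M(\omega\upharpoonright n)\in[0,\infty)$.

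First I would dispatch the equivalences (\ref{crabcon:2})$\Leftrightarrow$(\ref{crabcon:2.2})$\Leftrightarrow$(\ref{crabcon:3}) purely by elementary analysis, conditional on the convergence of $M_n(\omega)$. Condition (\ref{crabcon:2}) is just ``$L>0$''. If $L>0$, continuity of $x\mapsto 1/x$ on $(0,\infty)$ and the pointwise identity $M\cdot M^{\prime}=1$ give $\lim_n M^{\prime}(\omega\upharpoonright n)=1/L\in(0,\infty)$, which is (\ref{crabcon:2.2}); the converse is symmetric. For (\ref{crabcon:2})$\Leftrightarrow$(\ref{crabcon:3}), note that (\ref{crabcon:3}) is exactly $\inf_n M(\omega\upharpoonright n)>0$. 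If $L>0$, then $M$ is eventually larger than $L/2$, and the finitely many earlier terms are strictly positive by full support, so the infimum is positive. Conversely, if the infimum is positive, then $L\geq \inf_n M_n(\omega)>0$.

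Next I would handle (\ref{crabcon:2})$\Rightarrow$(\ref{crabcon:1}) by the standard change-of-measure trick. Given any computable non-negative dyadic $\mu$-martingale $N$, set $N^{\prime}(\sigma):=N(\sigma)M(\sigma)$. A one-line computation using the $\mu$-martingale equation for $N$ and the definitions of $M$ and the conditional probabilities verifies
\begin{equation*}
N^{\prime}(\sigma 0)\nu(\sigma 0\mid\sigma)+N^{\prime}(\sigma 1)\nu(\sigma 1\mid\sigma)=\frac{\mu(\sigma 0)N(\sigma 0)+\mu(\sigma 1)N(\sigma 1)}{\nu(\sigma)}=\frac{\mu(\sigma)N(\sigma)}{\nu(\sigma)}=N^{\prime}(\sigma),
\end{equation*}
so $N^{\prime}$ is a computable non-negative dyadic $\nu$-martingale. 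By $\omega\in\mathsf{CR}^{\nu}$ we get $\sup_n N^{\prime}(\omega\upharpoonright n)<\infty$; and from (\ref{crabcon:2}), $M(\omega\upharpoonright n)$ converges to $L>0$ and hence is bounded away from zero (by the argument in the previous paragraph). Dividing yields $\sup_n N(\omega\upharpoonright n)<\infty$, giving $\omega\in\mathsf{CR}^{\mu}$.

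Finally, for (\ref{crabcon:1})$\Rightarrow$(\ref{crabcon:2.2}) (equivalently (\ref{crabcon:2})), I would apply the convergence characterization to the $\mu$-martingale $M^{\prime}$: $\omega\in\mathsf{CR}^{\mu}$ yields $L^{\prime}:=\lim_n M^{\prime}(\omega\upharpoonright n)\in[0,\infty)$. Combined with the already established $L\in[0,\infty)$ and the pointwise identity $M\cdot M^{\prime}=1$, passing to the limit gives $LL^{\prime}=1$, forcing both limits to lie in $(0,\infty)$. I do not foresee any real obstacle here; the only non-bookkeeping step is verifying that $N^{\prime}=NM$ is a $\nu$-martingale, and that is the short calculation displayed above.
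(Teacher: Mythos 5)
Your proposal is correct and follows essentially the same route as the paper: both arguments hinge on the likelihood-ratio martingales $\frac{\mu}{\nu}$ and $\frac{\nu}{\mu}$ together with the convergence characterization of $\mathsf{CR}$, and both transfer computable randomness via the change-of-measure martingale $N^{\prime}=N\cdot\frac{\mu}{\nu}$. The only cosmetic difference is that you prove (\ref{crabcon:2})$\Leftrightarrow$(\ref{crabcon:3}) directly by elementary analysis, whereas the paper closes that link by going around the cycle (\ref{crabcon:3})$\Rightarrow$(\ref{crabcon:1})$\Rightarrow$(\ref{crabcon:2}); both are fine.
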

\begin{proof}
First suppose (\ref{crabcon:1}); we show (\ref{crabcon:2}). Let $N=\frac{\mu}{\nu}$, which is a computable dyadic $\nu$-martingale. Likewise, $M=\frac{\nu}{\mu}$ is a computable dyadic $\mu$-martingale. Since $\omega$ is in both $\mathsf{CR}^{\nu}$ and $\mathsf{CR}^{\mu}$, one has that $\lim_n N(\omega\upharpoonright n)$ and $\lim_n M(\omega\upharpoonright n)$ exist. Since $M=N^{-1}$, these two limits must be non-zero.

One has that (\ref{crabcon:2}) and (\ref{crabcon:2.2}) are trivially equivalent. Likewise, (\ref{crabcon:2.2}) trivially implies~(\ref{crabcon:3}). 

Suppose (\ref{crabcon:3}); we show (\ref{crabcon:1}). Suppose that $M$ is a computable dyadic $\mu$-martingale. Let $N=M\cdot \frac{\mu}{\nu}$. Then it is easy to check that $N$ is a computable dyadic $\nu$-martingale. Further, $N$ is computable since $M, \mu, \nu$ are computable. Since $\omega$ in $\mathsf{CR}^{\nu}$, there is $K>0$ such that $N(\omega\upharpoonright n)<K$ for all $n\geq 0$. By (\ref{crabcon:3}), choose $K^{\prime}>0$ such that $ \frac{\nu(\omega\upharpoonright n)}{\mu(\omega\upharpoonright n)}<K^{\prime}$ for all $n\geq 0$. Then one has $M(\omega\upharpoonright n)=N(\omega\upharpoonright n)\cdot \frac{\nu(\omega\upharpoonright n)}{\mu(\omega\upharpoonright n)}\leq K\cdot K^{\prime}$ for all $n\geq 0$. \end{proof}

Computable randomness can also be characterized via sequential tests:
\begin{defn}\label{defn:sequentialcr}
A \emph{bounded sequential Martin-L\"of $\nu$-test} is given by a computable sequence $V_n$ of c.e. opens and a computable probability measure $\rho$ such that for all $n\geq 0$ and all $\sigma$ in $2^{<\mathbb{N}}$ one has:
\begin{equation}\label{eqn:boundedtest}
\nu(V_n\cap [\sigma])\leq 2^{-n} \cdot \rho(\sigma)
\end{equation}
An element $\omega$ \emph{passes the test} if $\omega$ is not in $\bigcap_n V_n$.
\end{defn}
\noindent One can show that $\omega$ is in $\mathsf{CR}^{\nu}$ iff it passes all bounded sequential Martin-L\"of $\nu$-tests. The proof in \cite[\S{7.1.4}]{Downey2010aa} generalizes straightforwardly to all computable full support probability measures $\nu$ on Cantor space. However, unlike the rest of the paper, one allows the auxiliary $\rho$ in (\ref{eqn:boundedtest}) to not have full support.

The following is an important property of $\nu\abcon_{bdc}\mu$, and illustrates how this absolute continuity notion interacts with the sequential test notion for computable randomness:
\begin{prop}\label{prop:mixcomp}
If $\nu\abcon_{bdc}\mu$ then $\mathsf{CR}^{\nu}\subseteq \mathsf{CR}^{\mu}$.
\end{prop}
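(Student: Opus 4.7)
The plan is to build, from the hypothesis $\nu\abcon_{bdc}\mu$, a single computable non-negative dyadic $\nu$-martingale that dominates the ratio $\frac{\nu(\omega\upharpoonright n)}{\mu(\omega\upharpoonright n)}$ along every sequence, and then to invoke Proposition~\ref{crabcon}. Concretely, I would set $f(\omega) := \sup_n \frac{\nu(\omega\upharpoonright n)}{\mu(\omega\upharpoonright n)}$. Each dyadic function $\sigma \mapsto \frac{\nu(\sigma)}{\mu(\sigma)}$ is computable (with $\mu,\nu$ of full support), so $\omega \mapsto \frac{\nu(\omega\upharpoonright n)}{\mu(\omega\upharpoonright n)}$ is computable continuous (locally constant) for each $n$, and hence $f:2^{\mathbb{N}}\to [0,\infty]$ is lsc. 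The hypothesis $\nu\abcon_{bdc}\mu$ is exactly the statement that $\mathbb{E}_{\nu} f$ is finite and computable.

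Next, I would invoke Example~\ref{ex:martingale:2}: the function $N(\sigma) := \mathbb{E}_{\nu}[f\mid \mathscr{F}_{|\sigma|}](\sigma\overline{0})$ is then a computable non-negative dyadic $\nu$-martingale. Because $\omega \in \mathsf{CR}^{\nu}$, we have $\sup_n N(\omega\upharpoonright n) < \infty$. The key observation is that $f$ dominates its own localized value on each cylinder: for every $\omega'\in[\omega\upharpoonright n]$ one has $\omega'\upharpoonright n = \omega\upharpoonright n$, and so
\begin{equation*}
f(\omega') \;\geq\; \frac{\nu(\omega'\upharpoonright n)}{\mu(\omega'\upharpoonright n)} \;=\; \frac{\nu(\omega\upharpoonright n)}{\mu(\omega\upharpoonright n)}.
\end{equation*}
Averaging with respect to $\nu(\cdot\mid [\omega\upharpoonright n])$ yields
\begin{equation*}
N(\omega\upharpoonright n) \;=\; \mathbb{E}_{\nu}[f\mid \mathscr{F}_n](\omega) \;\geq\; \frac{\nu(\omega\upharpoonright n)}{\mu(\omega\upharpoonright n)},
\end{equation*}
so that $\sup_n \frac{\nu(\omega\upharpoonright n)}{\mu(\omega\upharpoonright n)} \leq \sup_n N(\omega\upharpoonright n) < \infty$.

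Finally, I would apply the equivalence \eqref{crabcon:3}$\Rightarrow$\eqref{crabcon:1} from Proposition~\ref{crabcon}, which is available because we are assuming $\omega \in \mathsf{CR}^{\nu}$, to conclude that $\omega \in \mathsf{CR}^{\mu}$. There is no real obstacle to this proof beyond verifying that $f$ is genuinely lsc and that Example~\ref{ex:martingale:2} applies; the pleasant point is that the single bounded-ratio integrability condition $\nu\abcon_{bdc}\mu$ is precisely what licenses the construction of the dominating computable $\nu$-martingale $N$.
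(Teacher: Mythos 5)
Your proof is correct, but it reaches the key intermediate fact by a different route than the paper. Both arguments start from the same lsc function $f(\omega)=\sup_n \frac{\nu(\omega\upharpoonright n)}{\mu(\omega\upharpoonright n)}$, observe that $\nu\abcon_{bdc}\mu$ says exactly that $\mathbb{E}_\nu f$ is finite and computable, and both finish by deducing $\sup_n \frac{\nu(\omega\upharpoonright n)}{\mu(\omega\upharpoonright n)}<\infty$ and invoking Proposition~\ref{crabcon}\eqref{crabcon:3}$\Rightarrow$\eqref{crabcon:1}. Where you differ is in how you get from ``$f$ is a Schnorr $L_1(\nu)$ test'' to ``$f(\omega)<\infty$ for $\omega\in\mathsf{CR}^\nu$'': the paper builds a bounded sequential Martin-L\"of $\nu$-test $V_n=\{f>2^n\}$ with auxiliary measure $\rho(\sigma)=\int_{[\sigma]}f\,d\nu\big/\int f\,d\nu$ and appeals to the test characterization of $\mathsf{CR}^\nu$ from Definition~\ref{defn:sequentialcr}, whereas you form the conditional-expectation martingale $N(\sigma)=\mathbb{E}_\nu[f\mid\mathscr{F}_{|\sigma|}](\sigma\overline{0})$ via Example~\ref{ex:martingale:2}, note that it dominates $\frac{\nu(\sigma)}{\mu(\sigma)}$ pointwise, and use the martingale definition of $\mathsf{CR}^\nu$ directly. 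The two are close cousins\textemdash your $N(\sigma)$ equals $\mathbb{E}_\nu[f]\cdot\rho(\sigma)/\nu(\sigma)$ for the paper's $\rho$, so your martingale is essentially the paper's test in ``betting strategy'' form\textemdash but your version is more self-contained: it avoids the bounded-test characterization (whose generalization to arbitrary computable full-support measures the paper only asserts) and works with the primary definition of computable randomness. The only point to be scrupulous about is that Example~\ref{ex:martingale:2} requires $\int_{[\sigma]}f\,d\nu$ to be computable uniformly in $\sigma$, not merely $\int f\,d\nu$ computable; this is a standard fact about lsc functions with computable expectation (and the paper itself uses it), so your appeal to the example is legitimate, but it is worth flagging that this is where the computability content of $\abcon_{bdc}$ is actually consumed.
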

\begin{proof}
Suppose that $\nu\abcon_{bdc}\mu$. Let $f(\omega):=\sup_m \frac{\nu(\omega \upharpoonright m)}{\mu(\omega \upharpoonright m)}$, and note that by considering $m=0$ we have $f\geq 1$ everywhere. Further, note that $f$ is an $L_1(\nu)$ Schnorr test, and hence $\int_{[\sigma]} f \; d\nu$ is computable, uniformly in~$\sigma$. Define computable probability measure $\rho(\sigma)=\nicefrac{\int_{[\sigma]} f \; d\nu}{\int f \; d\nu}$, which has full support since $f\geq 1$ everywhere. Define c.e. open $V_n=\{\omega: f(\omega)>2^n\}$. Then $\nu(V_n\cap [\sigma])\leq 2^{-n} \int_{[\sigma]} f \; d\nu = 2^{-n}\cdot \rho(\sigma)\cdot \mathbb{E}_{\nu}[f]$ for all $\sigma$. Then some computable subsequence of the $V_n$ is a bounded sequential Martin-L\"of test, and hence all $\omega$ in $\mathsf{CR}^{\nu}$ satisfy $f(\omega)\leq 2^n$ for some $n\geq 0$. Hence by Proposition~\ref{crabcon} we have that $\mathsf{CR}^{\nu}\subseteq \mathsf{CR}^{\mu}$.
\end{proof}

Finally, we can prove the following result, mentioned in the introduction \S\ref{sec:intro}.
\thmcrzero*
\begin{proof}
Suppose that $\omega$ in both $\mathsf{Mild}^{\nu}$ and $\mathsf{CR}^{\nu}$. Suppose that $\mu$ is such that $\nu\abcon_{bdc} \mu$. Let $M(\sigma)=\frac{\mu(\sigma)}{\nu(\sigma)}$ be the non-negative computable dyadic $\nu$-martingale. Since $\omega$ in $\mathsf{CR}^{\nu}$, we have that $\lim_n M(\omega\upharpoonright n)$ exists. By $\nu\abcon_{bdc} \mu$ and Proposition~\ref{prop:mixcomp}, we have $\omega$ in $\mathsf{CR}^{\mu}$. Then by Proposition~\ref{crabcon} we have that $\lim_n M(\omega\upharpoonright n)$ is non-zero. Then $\lim_n \frac{M(\omega\upharpoonright (n+1))}{M(\omega\upharpoonright n)}=1$.
The conclusion follows from  Proposition~\ref{prop:equivalentliminf}.
\end{proof}

The next result fills in one last component of the Diagram~\ref{eqn:abcondiagram} in \S\ref{sec:intro}:
\begin{prop}\label{prop:bndimplieskl}
$\nu\abcon_{bd}\mu$ implies $\nu\abcon_{kl}\mu$.
\end{prop}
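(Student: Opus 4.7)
The plan is to bound the logarithm of the likelihood ratio pointwise by its supremum and then use that $\ln x \leq x$ for $x > 0$. Writing $Z(\omega) := \sup_n \frac{\nu(\omega\upharpoonright n)}{\mu(\omega\upharpoonright n)}$ and $L_n(\omega) := \ln \frac{\nu(\omega\upharpoonright n)}{\mu(\omega\upharpoonright n)}$, the hypothesis $\nu\abcon_{bd}\mu$ is exactly $\mathbb{E}_{\nu} Z < \infty$, and the goal $\nu\abcon_{kl}\mu$ is exactly $\sup_n \mathbb{E}_{\nu} L_n < \infty$.

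First I would note that $Z(\omega)\geq 1$ everywhere, because the $n=0$ term in the supremum defining $Z$ is $\nu(\emptyset)/\mu(\emptyset)=1/1=1$. Then, for every $n\geq 0$ and every $\omega$, the monotonicity of the logarithm together with the elementary inequality $\ln x \leq x$ (valid for $x>0$) yields
\begin{equation*}
L_n(\omega) \;=\; \ln \frac{\nu(\omega\upharpoonright n)}{\mu(\omega\upharpoonright n)} \;\leq\; \ln Z(\omega) \;\leq\; Z(\omega).
\end{equation*}
Since $\mathbb{E}_{\nu} Z < \infty$, the function $Z$ is a $\nu$-integrable majorant, so taking $\nu$-expectations preserves the inequality: $\mathbb{E}_{\nu} L_n \leq \mathbb{E}_{\nu} Z < \infty$. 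Taking the supremum over $n$ on the left gives $\sup_n \mathbb{E}_{\nu} L_n \leq \mathbb{E}_{\nu} Z < \infty$, which is precisely $\nu\abcon_{kl}\mu$. (The expectations $\mathbb{E}_{\nu} L_n$ are well-defined and in fact non-negative, as the paper already notes in the discussion following Definition~\ref{defn:twomergingpreorders}, so there is no issue with $L_n$ possibly taking negative values.)

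There is essentially no obstacle here, since the argument is a direct pointwise domination followed by monotone integration. The only thing to be careful about is to make sure the bound $\ln Z \leq Z$ is actually being applied where $Z\geq 1$ (so $\ln Z \geq 0$), which is guaranteed by the full-support convention and the observation about $n=0$ above. The argument also makes transparent why the corresponding computable variant $\nu\abcon_{bdc}\mu \Rightarrow \nu\abcon_{klc}\mu$ is not obvious from this proof: computability of $\mathbb{E}_{\nu} Z$ gives computability of an upper bound on $\sup_n \mathbb{E}_{\nu} L_n$, but not computability of the supremum itself.
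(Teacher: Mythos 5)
Your proof is correct and is essentially the paper's argument: both rest on dominating the log-likelihood ratio by the likelihood ratio via the elementary inequality $\ln x \leq x$ (the paper uses the marginally sharper $\ln x \leq x-1$ and passes through $\sup_n \mathbb{E}_{\nu}[\,\cdot\,] \leq \mathbb{E}_{\nu}[\sup_n \cdot\,]$ at the end, whereas you fold the supremum in at the pointwise stage). The difference is cosmetic, and your closing observation about why the computable variant does not follow is accurate.
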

\begin{proof}
Suppose that $\nu\abcon_{bd}\mu$. Then $\mathbb{E}_{\nu}[\sup_n \frac{\nu(\cdot \upharpoonright n)}{\mu(\cdot \upharpoonright n)}]$ is finite. We must show that $\sup_n\mathbb{E}_{\nu}[\ln \frac{\nu(\omega\upharpoonright n)}{\mu(\omega\upharpoonright n)}]$ is finite (where recall from Example~\ref{ex:oursubmartingale} that this expectation is always non-negative). One has $\ln \frac{\nu(\omega\upharpoonright n)}{\mu(\omega\upharpoonright n)} \leq \frac{\nu(\omega\upharpoonright n)}{\mu(\omega\upharpoonright n)}-1$ and hence similarly with their expectations we have $\mathbb{E}_{\nu}[\ln \frac{\nu(\omega\upharpoonright n)}{\mu(\omega\upharpoonright n)}] \leq \mathbb{E}_{\nu}[\frac{\nu(\omega\upharpoonright n)}{\mu(\omega\upharpoonright n)}]-1$ and $\sup_n \mathbb{E}_{\nu}[\ln \frac{\nu(\omega\upharpoonright n)}{\mu(\omega\upharpoonright n)}]\leq \big( \sup_n\mathbb{E}_{\nu}[\frac{\nu(\omega\upharpoonright n)}{\mu(\omega\upharpoonright n)}]\big)-1 \leq \big( \mathbb{E}_{\nu}[\sup_n \frac{\nu(\omega\upharpoonright n)}{\mu(\omega\upharpoonright n)}]\big)-1<\infty$.
\end{proof}

We close by noting something mentioned immediately after the Diagram~\ref{eqn:abcondiagram} in \S\ref{sec:intro}, namely that the derivative $\frac{d\nu}{d\mu}$ being a computable point of the computable Polish space $L_2(\mu)$ (cf. \cite[\S{2.3}]{Huttegger2024-zb}) is sufficient for $\nu\abcon_{bdc}\mu$ and $\nu\abcon_{comp}\mu$. We do not know whether it is sufficient for  $\nu\abcon_{klc}\mu$.
\begin{prop}\label{prop:iamgood}
Suppose that  $\frac{d\nu}{d\mu}$ is $L_2(\mu)$-computable. Then both $\nu\abcon_{bdc}\mu$ and $\nu\abcon_{comp}\mu$. 
\end{prop}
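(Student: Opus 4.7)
Set $f := \frac{d\nu}{d\mu}$. By hypothesis $f$ is $L_2(\mu)$-computable, so $\|f\|_{L_2(\mu)}$ is a computable real and I can fix a rational $C > \|f\|_{L_2(\mu)}$. My plan is to handle $\nu\abcon_{comp}\mu$ directly with Cauchy--Schwarz, and to derive $\nu\abcon_{bdc}\mu$ from Doob's $L_2$ maximal inequality applied to the $\mu$-martingale $M_n(\omega) := \nu(\omega\upharpoonright n)/\mu(\omega\upharpoonright n) = \mathbb{E}_\mu[f\mid\mathscr{F}_n](\omega)$, together with an orthogonality-based argument that renders the $L_2(\mu)$-convergence $M_n\to f$ effective. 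For $\nu\abcon_{comp}\mu$, Cauchy--Schwarz gives $\nu(A) = \int \mathbf{1}_A\,f\,d\mu \leq \|f\|_{L_2(\mu)}\sqrt{\mu(A)}$ for every Borel event $A$, so setting $m(\epsilon) := \epsilon^2/C^2$ defines a computable function $m:\mathbb{Q}^{>0}\to\mathbb{Q}^{>0}$ with the required property.

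For the finiteness portion of $\nu\abcon_{bdc}\mu$, let $S := \sup_n M_n$. Doob's $L_2$ maximal inequality (applied to the non-negative martingale $M_n$, which converges to $f$ in $L_2(\mu)$) gives $\|S\|_{L_2(\mu)} \leq 2\|f\|_{L_2(\mu)}$, so a second Cauchy--Schwarz bounds
\[
\mathbb{E}_\nu[S] \;=\; \mathbb{E}_\mu[f\cdot S] \;\leq\; \|f\|_{L_2(\mu)}\cdot\|S\|_{L_2(\mu)} \;\leq\; 2\|f\|_{L_2(\mu)}^2 \;<\; \infty.
\]

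The main obstacle is computability of $\mathbb{E}_\nu[S]$, which I would establish by effective approximation. For each $N$, $S_N := \max_{n\leq N} M_n$ is an $\mathscr{F}_N$-measurable simple function, so $\mathbb{E}_\nu[S_N] = \sum_{|\sigma|=N} S_N(\sigma)\,\nu(\sigma)$ is computable uniformly in $N$ from the computability of $\mu$ and $\nu$. The crucial error estimate comes from observing that $S - S_N \leq \sup_{n>N}(M_n - M_N)^+ \leq \sup_{n>N}|M_n - M_N|$, and that $|M_{N+n} - M_N|$ is a non-negative submartingale converging in $L_2(\mu)$ to $|f - M_N|$; Doob's $L_2$ maximal inequality then yields $\|S - S_N\|_{L_2(\mu)} \leq 2\|f - M_N\|_{L_2(\mu)}$, whence
\[
0 \;\leq\; \mathbb{E}_\nu[S] - \mathbb{E}_\nu[S_N] \;\leq\; \|f\|_{L_2(\mu)}\cdot\|S-S_N\|_{L_2(\mu)} \;\leq\; 2\|f\|_{L_2(\mu)}\cdot\|f - M_N\|_{L_2(\mu)}.
\]
By orthogonality of martingale increments, $\|f - M_N\|_{L_2(\mu)}^2 = \|f\|_{L_2(\mu)}^2 - \|M_N\|_{L_2(\mu)}^2$, and $\|M_N\|_{L_2(\mu)}^2 = \sum_{|\sigma|=N} \nu(\sigma)^2/\mu(\sigma)$ is computable uniformly in $N$. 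Hence the upper bound on the error is itself computable and tends to $0$ as $N\to\infty$, yielding computable convergence $\mathbb{E}_\nu[S_N]\to\mathbb{E}_\nu[S]$, and therefore computability of $\mathbb{E}_\nu[S]$.
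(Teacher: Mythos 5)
Your proof is correct, but it takes a genuinely different route from the paper's on both halves. For $\nu\abcon_{comp}\mu$, the paper approximates $\frac{d\nu}{d\mu}$ by a fast Cauchy sequence of simple functions in $L_1(\mu)$ and builds $m(\epsilon)$ from the coefficients of the $n$-th approximant; your single Cauchy--Schwarz estimate $\nu(A)\leq \|f\|_{L_2(\mu)}\sqrt{\mu(A)}$ with $m(\epsilon)=\epsilon^2/C^2$ is shorter and exploits the $L_2$ hypothesis more directly. For $\nu\abcon_{bdc}\mu$, the paper argues abstractly: the maximal operator is computable continuous from $L_2(\mu)$ to $L_2(\mu)$ (citing an external result), and the inclusion $L_2(\mu)\hookrightarrow L_1(\nu)$ is computable continuous because H\"older makes it $\|f\|_{L_2(\mu)}$-Lipschitz; you instead unwind this into an explicit quantitative scheme, bounding $\mathbb{E}_\nu[S]-\mathbb{E}_\nu[S_N]$ by $2\|f\|_{L_2(\mu)}\|f-M_N\|_{L_2(\mu)}$ via Doob's $L_2$ inequality applied to the tail submartingale $|M_{N+n}-M_N|$, and then making the error bound computable through the Pythagorean identity $\|f-M_N\|_{L_2(\mu)}^2=\|f\|_{L_2(\mu)}^2-\|M_N\|_{L_2(\mu)}^2$ (the nice observation here is that orthogonality of martingale increments lets you compute $\|f-M_N\|_{L_2(\mu)}$ from the cylinder values of $\mu$ and $\nu$ alone, with no need to effectively relate $M_N$ to the dense-set approximants witnessing the computability of $f$). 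The trade-off: the paper's argument is modular and would transfer to any computable $p>1$ by the same citations, while yours is self-contained, gives explicit rates, and leans on $p=2$ specifically for the orthogonality step. All the individual steps check out, including the bound $S-S_N\leq\sup_{n>N}|M_n-M_N|$, the uniform computability of $\mathbb{E}_\nu[S_N]=\sum_{|\sigma|=N}S_N(\sigma)\nu(\sigma)$ and of $\|M_N\|_{L_2(\mu)}^2=\sum_{|\sigma|=N}\nu(\sigma)^2/\mu(\sigma)$, and the unbounded search for an $N$ certifying a small error bound.
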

\begin{proof}
Suppose that $\frac{d\nu}{d\mu}$ is $L_2(\mu)$-computable. 
 
First we show $\nu\abcon_{bdc}\mu$. Recall that $\mathbb{E}_{\mu}[\frac{d\nu}{d\mu}\mid\mathscr{F}_n](\omega)=\frac{\nu(\omega\upharpoonright n)}{\mu(\omega\upharpoonright n)}$ for all $\omega$ in $2^{\mathbb{N}}$. Since the maximal function is computable continuous from $L_p(\mu)$ to $L_p(\mu)$ when $p>1$ is computable, the function $f(\omega):=\sup_m \frac{\nu(\omega \upharpoonright m)}{\mu(\omega \upharpoonright m)}$ is $L_2(\mu)$ computable \cite[Proposition 5.2]{Huttegger2024-zb}. Further, one has that $f$ is $L_1(\nu)$-computable. This last point is because the inclusion map is computable continuous from $L_2(\mu)$ to $L_1(\nu)$ (note the change from $\mu$ to~$\nu$). For, the inclusion map has a computable uniform modulus of continuity since it is $\alpha$-Lipschitz, where $\alpha:=\|\frac{d\nu}{d\mu}\|_{L_2(\mu)}$, which is computable by hypothesis of $\frac{d\nu}{d\mu}$ being $L_2(\mu)$-computable. In particular, H\"older implies that it is $\alpha$-Lipschitz: $\|g\|_{L_1(\nu)} = \|g\cdot \frac{d\nu}{d\mu} \|_{L_1(\mu)}\leq \alpha\cdot \|g\|_{L_2(\mu)}$, for any $g$ in $L_2(\mu)$. Moreover, the countable dense set of $L_2(\mu)$ is $L_1(\nu)$-computable. To see this, it suffices in turn to show that if $U$ is c.e. open and $\mu(U)$ is computable, then $\nu(U)=\int_U \frac{d\nu}{d\mu} \; d\mu$ is computable. And this then follows by noting that the map $h\mapsto I_U\cdot h$ is computable continuous from $L_1(\mu)$ to itself: it is 1-Lipschitz and sends the countable dense set to a uniformly computable sequence of points in $L_1(\mu)$.

Second we show $\nu\abcon_{comp}\mu$. Since  $\frac{d\nu}{d\mu}$ is $L_2(\mu)$-computable, it is  also $L_1(\mu)$-computable.\footnote{In general, if $p\leq q$ are computable, then any computable point of $L_q(\mu)$ is also a computable point of $L_p(\mu)$. Cf. \cite[\S{A3}]{Rute2012aa}, \cite[50]{FreHanNiesSte2014}, \cite[\S{2.3}]{Huttegger2024-zb}.} Choose a computable sequence $f_n\geq 0$ from the countable dense set of $L_1(\mu)$ such that $f_n\rightarrow \frac{d\nu}{d\mu}$ fast. Without loss of generality $f_n=\sum_{i=1}^{k_n} q_{n,i}\cdot I_{[\sigma_{n,i}]}$, and so we define $q_n = \sup_{1\leq i\leq k_n} \left|q_{n,i}\right|$. For each rational $\epsilon>0$, compute $n\geq 0$ such that $2^{-n}<\frac{\epsilon}{2}$ and define $m(\epsilon)=\frac{1}{2}\cdot \frac{\epsilon}{q_n}$ (or just $\frac{1}{2}\cdot \epsilon$ if $q_n=0$). If $\mu(A)<m(\epsilon)$, then one has that $\nu(A)=\int_A \frac{d\nu}{d\mu} \; d\mu \leq 2^{-n} + \int_A f_n \; d\mu <\frac{\epsilon}{2} + \mu(A)\cdot q_n<\epsilon$.
\end{proof}

\section{Medium horizon and proof of Theorem~\ref{thm:main:mlr:weakmiddle}}\label{sec:middleground}

So far, we have focused on weak merging where we consider the one-step merging horizon $\mathscr{F}_{n+1}$ (cf. Definition~\ref{defn:weakvsstrong}). But it is also natural to consider the two-step merging horizon $\mathscr{F}_{n+2}$ and more generally the $\ell$-step merging horizon $\mathscr{F}_{n+\ell}$ for $\ell>1$. In this section we prove Theorem~\ref{thm:main:mlr:weakmiddle}, stated in \S\ref{sec:intro}, of these merging horizons. Indeed we prove a generalization of it, which isolates relevant aspects of the function $n\mapsto n+\ell$.

We begin with some notation. Any injective function $g:\mathbb{N}\rightarrow \mathbb{N}$ induces an equivalence relation $\sim_g$ on $\mathbb{N}$ by $n\sim_g m$ iff there is $k\geq 0$ such that $g^{(k)}(n)=m$ or $g^{(k)}(m)=n$, where $g^{(k)}$ denotes the $k$-fold iterate of $g$, and where $g^{(0)}$ is the identity function. We write the equivalence class of $n$ as $[n]_g$, or just as $[n]$ when $g$ is clear from context.

This notation in place, we define:
\begin{defn}
A merging horizon $\mathscr{G}_n$ is said to be \emph{augmented} if there is a computable function $g:\mathbb{N}\rightarrow \mathbb{N}$ such that $n<g(n)<g(n+1)$ for all $n\geq 0$ and such that $\mathscr{G}_n=\mathscr{F}_{g(n)}$.

It is said to be \emph{finitely augmented} if the function $g$ is such that $\mathbb{N}$ has only finitely many equivalence classes under $\sim_g$.
\end{defn}

The paradigmatic example is the following, where of course the case of e.g. $\ell=2$ just is the case of the merging horizon $\mathscr{F}_{n+2}$.
\begin{ex}\label{ex:gellmap}
Let $\ell\geq 1$ be fixed, and let $g_{\ell}(n)=n+\ell$. Then  $\mathbb{N}$ has exactly $\ell$ equivalence classes under $\sim_{g_{\ell}}$, namely $\{ \{ n+k\cdot \ell: k\geq 0\}: n<\ell\}$.
\end{ex}
\noindent Of course, these equivalence classes are similar to the elements of the group $\nicefrac{\mathbb{Z}}{\ell \mathbb{Z}}$, but as far as we are aware that is just a coincidence.

Suppose $\mathscr{G}_n$ is augmented, and its set of equivalence classes is $\{ [n_i] :i\in I\}$, where the displayed $n_i$ is the least element of its equivalence class $[n_i]$. Since $g(n)>n$ for all $n\geq 0$, each equivalence class is infinite, and $[n_i]$ can in turn be enumerated as $n_i <g(n_i)<g^{(2)}(n_i)<g^{(3)}(n_i)<\cdots$, which generates a filtration $\mathscr{F}_{n_i}\subseteq \mathscr{F}_{g(n_i)}\subseteq \mathscr{F}_{g^{(2)}(n_i)}\subseteq \mathscr{F}_{g^{(3)}(n_i)}\subseteq \cdots$. This is different than the filtration $\mathscr{G}_n:=\mathscr{F}_{g(n)}$, since the latter does not include the least element $\mathscr{F}_{n_i}$, and since the equivalence class $[n_i]$ may not be all of $\mathbb{N}$. Note that distinct equivalence classes $[n_i], [n_j]$ will generate disjoint filtrations. This is easiest to see by way of example:

\begin{ex}\label{ex:gellmap2}
Suppose that $\ell=2$, and consider $g_{\ell}$ as in Example~\ref{ex:gellmap}. Then the two equivalence classes are just the evens and the odds, generating two filtrations:
\begin{align}
& \mathscr{F}_0 \subseteq \mathscr{F}_2 \subseteq \mathscr{F}_4 \subseteq \cdots \notag \\
& \mathscr{F}_1 \subseteq \mathscr{F}_3 \subseteq \mathscr{F}_5 \subseteq \cdots \label{ex:gellmap2:subseq}
\end{align}
Note by contrast that the merging horizon $\mathscr{G}_0\subseteq \mathscr{G}_1\subseteq \mathscr{G}_2\subseteq\cdots$ associated to $g_{\ell}$ is $\mathscr{F}_2 \subseteq \mathscr{F}_3 \subseteq \mathscr{F}_4 \subseteq\cdots$, which does not include $\mathscr{F}_0, \mathscr{F}_1$.
\end{ex}

In what follows, we generalize the work of \S\ref{sec:proof:main} to the finitely augmented setting. Basically this comes down to partitioning as in (\ref{ex:gellmap2:subseq}) and performing the analysis of \S\ref{sec:proof:main} on the individual parts. Since the proofs are largely the same as the corresponding parts of \S\ref{sec:proof:main}, we omit many of the details.

First, in parallel to Proposition~\ref{prop:derivativecalc}, we have:
\begin{prop}
For $n\geq 0$ and $m>n$ one has:
\begin{equation}
\frac{d(\nu(\cdot \mid \mathscr{F}_n)(\omega) \upharpoonright \mathscr{F}_{m})}{d(\mu(\cdot \mid \mathscr{F}_n)(\omega) \upharpoonright \mathscr{F}_{m})}(\omega^{\prime}) = \sum_{\left|\sigma\right|=m\mbox{-}n} \frac{\nu((\omega\upharpoonright n)\,\sigma\mid \omega\upharpoonright n\,)}{\mu((\omega\upharpoonright n)\,\sigma\mid \omega\upharpoonright n\,)} \cdot X_n^{\sigma}(\omega^{\prime}) \label{eqn:thedervweak2}
\end{equation}
where
\begin{equation}
X_n^{\sigma}(\omega^{\prime})=
\begin{cases}
1      & \text{if $\forall \; i<m-n \;\; \omega^{\prime}(n+i)=\sigma(i)$}, \\
0      & \text{otherwise}.
\end{cases}
\end{equation}
\end{prop}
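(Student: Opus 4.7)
My plan is to mimic the proof of Proposition~\ref{prop:derivativecalc} essentially verbatim, with the two-element sum over $\iota \in \{0,1\}$ replaced by the finite sum over binary strings $\sigma$ of length $m-n$. The goal is to verify that the right-hand side of~(\ref{eqn:thedervweak2}), call it $F(\omega')$, satisfies the defining integral identity for the Radon--Nikodym derivative on the left-hand side of~(\ref{eqn:thedervweak2}). Note first that $F$ is $\mathscr{F}_m$-measurable, since $X_n^{\sigma}$ is $\mathscr{F}_m$-measurable for each $\sigma$ with $|\sigma|=m-n$; this is exactly the measurability that is required for the derivative.

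Next, I would fix an arbitrary Borel event $A$ and compute, using linearity of the integral and the fact that $X_n^{\sigma}$ is the indicator function of the basic clopen $[(\omega\upharpoonright n)\,\sigma]$,
\begin{align*}
\int_A F(\omega') \; & d\mu(\cdot \mid \omega\upharpoonright n)(\omega') \\
&= \sum_{|\sigma|=m-n} \frac{\nu((\omega\upharpoonright n)\,\sigma \mid \omega\upharpoonright n)}{\mu((\omega\upharpoonright n)\,\sigma \mid \omega\upharpoonright n)} \cdot \mu(A \cap [(\omega\upharpoonright n)\,\sigma] \mid \omega\upharpoonright n).
\end{align*}
Now I would specialize to the case where $A \in \mathscr{F}_m$. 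Since the cylinders $[(\omega\upharpoonright n)\,\sigma]$, as $\sigma$ ranges over length-$(m-n)$ binary strings, form a partition of $[\omega\upharpoonright n]$ into minimal nonempty $\mathscr{F}_m$-cells below $[\omega\upharpoonright n]$, for each such $\sigma$ the intersection $A \cap [(\omega\upharpoonright n)\,\sigma]$ is either empty or equal to $[(\omega\upharpoonright n)\,\sigma]$. In the latter case the $\mu$-factor equals $\mu((\omega\upharpoonright n)\,\sigma \mid \omega\upharpoonright n)$, which cancels the denominator and leaves the numerator $\nu((\omega\upharpoonright n)\,\sigma \mid \omega\upharpoonright n)$. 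Summing over those $\sigma$ with $[(\omega\upharpoonright n)\,\sigma] \subseteq A$ yields exactly $\nu(A \mid \omega\upharpoonright n)$, by finite additivity of $\nu(\cdot \mid \omega \upharpoonright n)$ on this partition. This confirms the defining property.

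I expect no genuine obstacle: the derivation is just bookkeeping, and the only conceptual point worth highlighting is that $X_n^{\sigma}$ is not the indicator of the cylinder $[\sigma]$ but rather the indicator of the event ``positions $n, n+1, \ldots, m-1$ read off $\sigma$,'' which is indeed measurable with respect to $\mathscr{F}_m$ but not in general $\mathscr{F}_{m-n}$. The case $m = n+1$ recovers Proposition~\ref{prop:derivativecalc} exactly, with $X_n^{1}$ and $X_n^{0}$ corresponding to the singleton-string cases $\sigma = 1$ and $\sigma = 0$.
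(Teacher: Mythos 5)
Your proposal is correct and follows essentially the same route as the paper: verify the defining integral identity for the Radon--Nikodym derivative against an $A\in\mathscr{F}_m$, using that $A\cap[(\omega\upharpoonright n)\,\sigma]$ is either empty or all of $[(\omega\upharpoonright n)\,\sigma]$ and then applying finite additivity (the paper writes $A$ as a disjoint union of length-$m$ cylinders, which is the same computation). The only nit is your passing claim that $X_n^{\sigma}$ is the indicator of $[(\omega\upharpoonright n)\,\sigma]$\textemdash it is not, as you yourself note at the end, but since you integrate against $\mu(\cdot\mid\omega\upharpoonright n)$, which is supported on $[\omega\upharpoonright n]$, the two indicators agree where it matters.
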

\begin{proofdetail}
\begin{proof}
To show~(\ref{eqn:thedervweak2}), we show that the expression on the right-hand side of it satisfies the defining property of the Radon-Nikodym derivative on the left-hand side of it. As an initial step, consider an arbitrary Borel event $A$, and consider the following, wherein $\frac{d(\nu(\cdot \mid \mathscr{F}_n)(\omega) \upharpoonright \mathscr{F}_{m})}{d(\mu(\cdot \mid \mathscr{F}_n)(\omega) \upharpoonright \mathscr{F}_{m})}(\omega^{\prime})$ means the formula on the right-hand side of (\ref{eqn:thedervweak2}):
\begin{align}
& \int_A \frac{d(\nu(\cdot \mid \mathscr{F}_n)(\omega) \upharpoonright \mathscr{F}_{m})}{d(\mu(\cdot \mid \mathscr{F}_n)(\omega) \upharpoonright \mathscr{F}_{m})}(\omega^{\prime}) \;d\mu(\cdot \mid \omega\upharpoonright n\,)(\omega^{\prime}) \notag \\
= & \frac{1}{\mu(\omega\upharpoonright n\,)} \int_{A\cap [\omega\upharpoonright n\,]} \frac{d(\nu(\cdot \mid \mathscr{F}_n)(\omega) \upharpoonright \mathscr{F}_{m})}{d(\mu(\cdot \mid \mathscr{F}_n)(\omega) \upharpoonright \mathscr{F}_{m})}(\omega^{\prime})\; d\mu(\omega^{\prime}) \notag \\
= & \sum_{\left|\sigma\right|=m\mbox{-}n} \frac{1}{\mu(\omega\upharpoonright n\,)} \int_{A\cap [\omega\upharpoonright n]} \frac{\nu((\omega\upharpoonright n)\,\sigma\mid \omega\upharpoonright n\,)}{\mu((\omega\upharpoonright n)\,\sigma\mid \omega\upharpoonright n\,)} \cdot X_n^{\sigma}(\omega^{\prime})  \; d\mu(\omega^{\prime}) \notag \\
=&  \sum_{\left|\sigma\right|=m\mbox{-}n}  \frac{1}{\mu(\omega\upharpoonright n\,)}  \frac{\nu((\omega\upharpoonright n)\,\sigma\mid \omega\upharpoonright n\,)}{\mu((\omega\upharpoonright n)\,\sigma\mid \omega\upharpoonright n\,)} \mu(A\cap [(\omega\upharpoonright n)\,\sigma]) \notag \\ =& \sum_{\left|\sigma\right|=m\mbox{-}n} \nu((\omega\upharpoonright n)\,\sigma\mid \omega\upharpoonright n\,) \mu(A\mid (\omega\upharpoonright n)\,\sigma) \label{eqn:derivative2}
\end{align}
If $A$ is in $\mathscr{F}_m$, then we have $A=\bigsqcup_{i=1}^k [\tau_i\,\sigma_i]$ where $\left|\tau_i\right|=n$ and $\left|\sigma_i\right|=m\mbox{-}n$. Then one has 
\begin{align*}
& \sum_{\left|\sigma\right|=m\mbox{-}n} \nu((\omega\upharpoonright n)\,\sigma\mid \omega\upharpoonright n\,) \mu(A\mid (\omega\upharpoonright n)\,\sigma) \\
= & \sum_{\left|\sigma\right|=m\mbox{-}n} \nu((\omega\upharpoonright n)\,\sigma\mid \omega\upharpoonright n\,) \sum_{i=1}^k \mu( \tau_i\,\sigma_i\mid (\omega\upharpoonright n)\,\sigma) \\
= & \sum_{i=1}^k \sum_{\left|\sigma\right|=m\mbox{-}n} \nu((\omega\upharpoonright n)\, \sigma \mid \omega\upharpoonright n) \mu(\tau_i\,\sigma_i \mid (\omega\upharpoonright n)\, \sigma) \\
= & \sum_{\tau_i=\omega\upharpoonright n, \sigma_i=\sigma} \nu(\tau_i\, \sigma_i\mid \omega\upharpoonright n) = \nu(A\mid \omega\upharpoonright n)
\end{align*}
which is what we wanted to show.
\end{proof}
\end{proofdetail}

Second, corresponding to Proposition~\ref{prop:calculatekullback}, we have:
\begin{prop}\label{prop:calculatekullback2}
Suppose that  $\mathscr{G}_n$ is augmented with $g$. Then one has
\begin{equation}
 D_{\mathscr{G}_n}(\nu \mid \mu)(\omega)= \sum_{\left|\sigma\right|=g(n)-n} \bigg(\ln \frac{\nu((\omega\upharpoonright n)\, \sigma \mid \omega\upharpoonright n\,)}{\mu((\omega\upharpoonright n)\, \sigma \mid \omega\upharpoonright n\,)} \bigg) \cdot \nu((\omega\upharpoonright n)\, \sigma \mid \omega\upharpoonright n\,)\label{eqn:dn1v2}
\end{equation}
\end{prop}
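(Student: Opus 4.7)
The plan is to mimic, in the augmented setting, the one-line proof of Proposition~\ref{prop:calculatekullback}, using the derivative formula (\ref{eqn:thedervweak2}) that was just established in place of (\ref{eqn:thedervweak}). Concretely, start from the second formulation in Definition~\ref{defn:threedistance}(\ref{defn:threedistance:3}) applied to $\nu(\cdot\mid \mathscr{F}_n)(\omega)\upharpoonright \mathscr{G}_n$ and $\mu(\cdot\mid \mathscr{F}_n)(\omega)\upharpoonright \mathscr{G}_n$, where $\mathscr{G}_n = \mathscr{F}_{g(n)}$:
\begin{equation*}
D_{\mathscr{G}_n}(\nu\mid\mu)(\omega) = \mathbb{E}_{\nu(\cdot\mid \mathscr{F}_n)(\omega)}\bigg[\ln \frac{d(\nu(\cdot\mid \mathscr{F}_n)(\omega)\upharpoonright \mathscr{F}_{g(n)})}{d(\mu(\cdot\mid \mathscr{F}_n)(\omega)\upharpoonright \mathscr{F}_{g(n)})}\bigg].
\end{equation*}
Then substitute the expression for the derivative given by (\ref{eqn:thedervweak2}) with $m = g(n)$.

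The key simplification is that the indicator random variables $X_n^{\sigma}$, as $\sigma$ ranges over binary strings of length $g(n)-n$, partition $2^{\mathbb{N}}$: for each $\omega^{\prime}$ there is exactly one $\sigma$ with $X_n^{\sigma}(\omega^{\prime}) = 1$. Since the coefficients $c_{\sigma} := \nicefrac{\nu((\omega\upharpoonright n)\,\sigma\mid \omega\upharpoonright n)}{\mu((\omega\upharpoonright n)\,\sigma\mid \omega\upharpoonright n)}$ are all strictly positive (here we use full support of both $\mu$ and $\nu$), one has the pointwise identity $\ln\bigl(\sum_{\sigma} c_{\sigma} X_n^{\sigma}(\omega^{\prime})\bigr) = \sum_{\sigma} (\ln c_{\sigma})\cdot X_n^{\sigma}(\omega^{\prime})$. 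Taking $\nu(\cdot\mid \omega\upharpoonright n)$-expectation commutes with the finite sum and uses $\mathbb{E}_{\nu(\cdot\mid \omega\upharpoonright n)}[X_n^{\sigma}] = \nu((\omega\upharpoonright n)\,\sigma\mid \omega\upharpoonright n)$, which yields precisely the right-hand side of (\ref{eqn:dn1v2}).

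There is no real obstacle: this is a direct calculation once the derivative formula (\ref{eqn:thedervweak2}) is in hand, and it reduces to the observation that $\{X_n^{\sigma}\}_{|\sigma|=g(n)-n}$ is a partition of unity and that $\ln$ of such an indicator sum distributes pointwise. The only care needed is in the bookkeeping of the length-$(g(n)-n)$ continuations of $\omega\upharpoonright n$, which parallels exactly the $\iota\in\{0,1\}$ case of Proposition~\ref{prop:calculatekullback}.
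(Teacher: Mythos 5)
Your proposal is correct and follows essentially the same route as the paper's own proof: both start from the expectation formulation of the Kullback-Leibler divergence, substitute the Radon-Nikodym derivative from (\ref{eqn:thedervweak2}), and reduce to the fact that the derivative is constant on each cell $[(\omega\upharpoonright n)\,\sigma]$ with $\nu(\cdot\mid\omega\upharpoonright n)$-mass $\nu((\omega\upharpoonright n)\,\sigma\mid\omega\upharpoonright n)$. Your phrasing via the partition of unity $\{X_n^{\sigma}\}$ and the pointwise identity for $\ln$ of an indicator sum is just a repackaging of the paper's step of splitting the integral over $[\omega\upharpoonright n]$ into the clopen pieces $[(\omega\upharpoonright n)\,\sigma]$.
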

\begin{proofdetail}
\begin{proof}
Fix $n\geq 0$ and let $m=g(n)$, so that $m>n$. Then one has
\begin{align*}
  D_{\mathscr{G}_n}(\nu \mid \mu)(\omega)  = &  D(\nu(\cdot \mid \mathscr{F}_n)(\omega) \upharpoonright \mathscr{F}_m \mid  \mu(\cdot \mid \mathscr{F}_n)(\omega)\upharpoonright \mathscr{F}_m)\\
= & \int \ln \frac{d(\nu(\cdot \mid \mathscr{F}_n)(\omega) \upharpoonright \mathscr{F}_m)}{d(\mu(\cdot \mid \mathscr{F}_n)(\omega) \upharpoonright \mathscr{F}_m)}(\omega^{\prime}) \; d\nu(\cdot \mid \omega\upharpoonright n\,)(\omega^{\prime}) \\
= & \frac{1}{\nu(\omega\upharpoonright n)} \int_{[\omega\upharpoonright n]} \ln \frac{d(\nu(\cdot \mid \mathscr{F}_n)(\omega) \upharpoonright \mathscr{F}_m)}{d(\mu(\cdot \mid \mathscr{F}_n)(\omega) \upharpoonright \mathscr{F}_m)}(\omega^{\prime}) \; \; d\nu(\omega^{\prime}) \\
= & \sum_{\left|\sigma\right|=m\mbox{-}n} \frac{1}{\nu(\omega\upharpoonright n)} \int_{[(\omega\upharpoonright n)\,\sigma]} \ln  \frac{d(\nu(\cdot \mid \mathscr{F}_n)(\omega) \upharpoonright \mathscr{F}_m)}{d(\mu(\cdot \mid \mathscr{F}_n)(\omega) \upharpoonright \mathscr{F}_m)}(\omega^{\prime})\; d\nu(\omega^{\prime}) \\
= & \sum_{\left|\sigma\right|=m\mbox{-}n} \frac{1}{\nu(\omega\upharpoonright n)} \int_{[(\omega\upharpoonright n) \,\sigma]}  \ln  \frac{\nu((\omega\upharpoonright n)\,\sigma\mid \omega\upharpoonright n\,)}{\mu((\omega\upharpoonright n)\,\sigma\mid \omega\upharpoonright n\,)}  \; \; d\nu(\omega^{\prime}) \\
= & \sum_{\left|\sigma\right|=m\mbox{-}n} \bigg(\ln \frac{\nu((\omega\upharpoonright n)\, \sigma \mid \omega\upharpoonright n\,)}{\mu((\omega\upharpoonright n)\, \sigma \mid \omega\upharpoonright n\,)} \bigg) \cdot \nu((\omega\upharpoonright n)\, \sigma \mid \omega\upharpoonright n\,)
\end{align*}
In this, it is only in the penultimate step that we appeal to (\ref{eqn:thedervweak2}), where the expression in (\ref{eqn:thedervweak2}) simplifies due to our being in the the clopen $[(\omega\upharpoonright n)\, \sigma]$. 
\end{proof}
\end{proofdetail}

The correspondence between classical martingales and dyadic martingales was discussed in Definition~\ref{defn:dyadicmartingale}. We need to slightly update this in order to handle such filtrations as those in (\ref{ex:gellmap2:subseq}). Hence parallel to Definition~\ref{defn:dyadic} we define:
\begin{defn}\label{defn:dyadicv2}
Suppose that $\Delta$ is a computable set of natural numbers enumerated in increasing order as $\delta_0<\delta_1<\cdots$.  

Define $2^{<\Delta} = \{ \sigma \in 2^{<\mathbb{N}}: \exists \; k\geq 0 \; \left|\sigma\right|=\delta_k\}$.

A real-valued function is called \emph{$\Delta$-ary} if it has domain $2^{<\Delta}$.  

A $\Delta$-ary function $F:2^{<\Delta}\rightarrow \mathbb{R}$ is \emph{computable} if $F(\sigma)$ is a computable real number, uniformly in $\sigma$ from $2^{<\Delta}$.

If $F:2^{<\Delta}\rightarrow \mathbb{R}$ is a $\Delta$-ary function, then we define the sequence of functions $F_k:2^{\mathbb{N}}\rightarrow \mathbb{R}$ by $F_k(\omega)=F(\omega\upharpoonright \delta_k)$.
\end{defn}

In this last part of the definition, note that $F_k(\omega)$ is defined in terms of the first $\delta_k$ bits of $\omega$, rather than the first $k$ bits of $\omega$.

Parallel to Definition~\ref{defn:dyadicmartingale}, we define: 
\begin{defn}
Suppose that $\Delta$ is a computable set of natural numbers enumerated in increasing order as $\delta_0<\delta_1<\cdots$.  
Then a \emph{$\Delta$-ary $\nu$-martingale} is a $\Delta$-ary function $M^{\Delta}:2^{<\Delta}\rightarrow \mathbb{R}$ satisfying the following for all $k\geq 0$ and all $\tau$ of length~$\delta_k$:
\begin{equation}\label{eqn:martingaleconditionv2}
M^{\Delta}(\tau) = \sum_{\left|\sigma\right|=\delta_{k+1}-\delta_k} M^{\Delta}(\tau\,\sigma)\nu(\tau\,\sigma\mid \tau)
\end{equation}
\noindent Further, {\it $\Delta$-ary $\nu$-submartingales} are defined the same, but with equality replaced by $\leq$; and {\it $\Delta$-ary $\nu$-supermartingales} are defined just the same but with the equality replaced by $\geq$.
\end{defn}

Corresponding to $\Delta$, consider the increasing filtration $\mathscr{D}_k=\mathscr{F}_{\delta_k}$. Using the preferred version of the conditional expectation from (\ref{eqn:preferredconditional}), one can check that for $M:2^{<\Delta}\rightarrow \mathbb{R}$, one has that (\ref{eqn:martingaleconditionv2}) is equivalent to the classical $\nu$-martingale condition $M_k^{\Delta} = \mathbb{E}_{\nu}[M_{k+1}^{\Delta}\mid \mathscr{D}_k]$. Likewise the  $\Delta$-ary $\nu$-submartingale condition can be rephrased as $M_k^{\Delta}\leq \mathbb{E}_{\nu}[M_{k+1}^{\Delta}\mid \mathscr{D}_k]$ and the $\Delta$-ary $\nu$-supermartingale condition can be rephrased as $M_k^{\Delta}\geq \mathbb{E}_{\nu}[M_{k+1}^{\Delta}\mid \mathscr{D}_k]$. To see these equivalences it suffices to note that for $\omega$ in $[\tau]$, where $\tau$ is of length $\delta_k$, we have:
\begin{align*}
& \mathbb{E}_{\nu}[M_{k+1}^{\Delta}\mid \mathscr{D}_k](\omega) = \mathbb{E}_{\nu}[M_{k+1}^{\Delta}\mid \mathscr{F}_{\delta_k}](\tau^{\frown}\overline{0}) =\frac{1}{\nu(\tau)} \int_{[\tau]} M_{k+1}^{\Delta}(\omega^{\prime}) \; d\nu(\omega^{\prime}) \\
& \sum_{\left|\sigma\right|=\delta_{k+1}-\delta_k} \frac{1}{\nu(\tau)} \int_{[\tau\,\sigma]} M^{\Delta}(\tau\, \sigma) \; d\nu(\omega^{\prime}) =  \sum_{\left|\sigma\right|=\delta_{k+1}-\delta_k} M^{\Delta}(\tau\,\sigma)\nu(\tau\,\sigma\mid \tau)
\end{align*}

As with Proposition~\ref{prop:jensensubmartingal}, the main example of $\Delta$-ary $\nu$-submartingales comes from convex functions applied to $\Delta$-ary $\nu$-martingales.

In parallel to the Effective Doob Decomposition Theorem~\ref{thm:doobdecomp}, we have:
\begin{thm}\label{thm:doobdecomp2} (Effective $\Delta$-ary Doob Decomposition).

Suppose that $\Delta$ is a computable set of natural numbers enumerated in increasing order as $\delta_0<\delta_1<\cdots$.  

Suppose that $L^{\Delta}$ is a computable $\Delta$-ary $\nu$-submartingale. Then there is a unique pair of computable $\Delta$-ary $\nu$-martingale $N^{\Delta}$ and an increasing non-negative computable $\Delta$-ary function $A^{\Delta}$ with the following properties, for all $k\geq 0$ and $\tau$ of length $\delta_k$ and $\sigma$ of length $\delta_{k+1}-\delta_k$:
\begin{enumerate}[leftmargin=*]
    \item \label{eqn:apartofdoob:02} $L^{\Delta}(\tau)=N^{\Delta}(\tau)+A^{\Delta}(\tau)$
    \item \label{eqn:apartofdoob:002} $N^{\Delta}(\tau)=L^{\Delta}(\tau)$ if $k=0$
    \item \label{eqn:apartofdoob2} $A^{\Delta}(\tau\sigma)-A^{\Delta}(\tau) = \big(\sum_{\left|\rho\right|=\delta_{k+1}-\delta_k} L^{\Delta}(\tau \rho) \nu(\tau \rho \mid \tau) \big) - L^{\Delta}(\tau)$
\end{enumerate}
\end{thm}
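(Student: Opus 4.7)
The proof should closely mirror that of Theorem~\ref{thm:doobdecomp}, with two adjustments: the base case is now at strings of length $\delta_0$ rather than the empty string, and at each recursive step one averages over binary strings of length $\delta_{k+1}-\delta_k$ rather than over $\{0,1\}$. I would first handle uniqueness by induction on $k$. Condition (\ref{eqn:apartofdoob:002}) forces $N^{\Delta}(\tau)=L^{\Delta}(\tau)$ and hence, via (\ref{eqn:apartofdoob:02}), $A^{\Delta}(\tau)=0$ whenever $|\tau|=\delta_0$; the recurrence in (\ref{eqn:apartofdoob2}) then inductively determines $A^{\Delta}$ at every higher level (since its right-hand side depends only on $L^{\Delta}$ and $\nu$), after which (\ref{eqn:apartofdoob:02}) determines $N^{\Delta}$.

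For existence, I would introduce an auxiliary computable $\Delta$-ary function $H^{\Delta}:2^{<\Delta}\to\mathbb{R}$ by setting $H^{\Delta}(\tau)=L^{\Delta}(\tau)$ when $|\tau|=\delta_0$, and, for $|\tau|=\delta_k$ with $k\geq 0$ and $|\sigma|=\delta_{k+1}-\delta_k$, by $H^{\Delta}(\tau\sigma)=L^{\Delta}(\tau\sigma)-\sum_{|\rho|=\delta_{k+1}-\delta_k}L^{\Delta}(\tau\rho)\,\nu(\tau\rho\mid \tau)$. The crucial point is that the subtracted term depends on $\tau$ but not on $\sigma$, so $\sum_{|\sigma|=\delta_{k+1}-\delta_k}H^{\Delta}(\tau\sigma)\,\nu(\tau\sigma\mid \tau)=0$, exactly as in the binary case. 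I would then define $N^{\Delta}(\tau)=\sum_{j=0}^{k}H^{\Delta}(\tau\upharpoonright \delta_j)$ for $|\tau|=\delta_k$, and $A^{\Delta}(\tau)=L^{\Delta}(\tau)-N^{\Delta}(\tau)$. Because $N^{\Delta}(\tau\sigma)=N^{\Delta}(\tau)+H^{\Delta}(\tau\sigma)$, the vanishing-in-conditional-expectation property of $H^{\Delta}$ immediately yields the $\Delta$-ary $\nu$-martingale condition (\ref{eqn:martingaleconditionv2}) for $N^{\Delta}$, and (\ref{eqn:apartofdoob:002}) is automatic from the base clause.

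To finish, I would verify (\ref{eqn:apartofdoob2}) directly by computing
$A^{\Delta}(\tau\sigma)-A^{\Delta}(\tau)=\bigl(L^{\Delta}(\tau\sigma)-L^{\Delta}(\tau)\bigr)-\bigl(N^{\Delta}(\tau\sigma)-N^{\Delta}(\tau)\bigr)=\bigl(L^{\Delta}(\tau\sigma)-L^{\Delta}(\tau)\bigr)-H^{\Delta}(\tau\sigma)$,
and unpacking $H^{\Delta}(\tau\sigma)$ gives exactly the expression on the right-hand side of (\ref{eqn:apartofdoob2}); note that this expression is manifestly $\sigma$-independent, as the statement of the theorem requires. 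The $\Delta$-ary $\nu$-submartingale property of $L^{\Delta}$ ensures that this increment is nonnegative, so $A^{\Delta}$ is increasing; combined with $A^{\Delta}(\tau)=0$ at length $\delta_0$, this also gives nonnegativity throughout. Computability of $H^{\Delta}$, $N^{\Delta}$, and $A^{\Delta}$ is inherited from that of $L^{\Delta}$ and $\nu$, since only finite sums are involved and the map $|\tau|\mapsto k$ is computable from $\Delta$.

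I do not expect any substantive obstacle. The only thing to watch is bookkeeping around the nonuniform step sizes $\delta_{k+1}-\delta_k$: the inner sums range over all binary strings of that length, and $N^{\Delta}, A^{\Delta}$ are only defined on $2^{<\Delta}$, that is, at lengths lying in $\Delta$. Beyond this, the argument is a faithful transcription of the proof of Theorem~\ref{thm:doobdecomp}.
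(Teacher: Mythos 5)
Your proposal is correct and follows essentially the same route as the paper's own proof: the same auxiliary function $H^{\Delta}$ with the $\sigma$-independent subtracted term, the same vanishing-in-conditional-expectation identity, the same definitions $N^{\Delta}(\tau)=\sum_{j\leq k}H^{\Delta}(\tau\upharpoonright\delta_j)$ and $A^{\Delta}=L^{\Delta}-N^{\Delta}$, and the same uniqueness induction on $k$. No gaps.
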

In parallel to Theorem~\ref{thm:doobdecomp}, we refer to $A^{\Delta}$ the \emph{$\Delta$-ary predictable process}, since (\ref{eqn:apartofdoob2}) shows that $A^{\Delta}(\tau\sigma)$ depends on $\tau$ and not~$\sigma$. Note that the identity in (\ref{eqn:apartofdoob2}) can also be expressed in classical terms as follows, where $\omega$ ranges over $2^{\mathbb{N}}$ and $k\geq 0$:
\begin{equation}\label{eqn:classicalapartofdoob2}
A_{k+1}^{\Delta}(\omega)-A_k^{\Delta}(\omega) = \mathbb{E}_{\nu}[L_{k+1}^{\Delta}\mid \mathscr{D}_k](\omega) - L_k^{\Delta}(\omega)
\end{equation}

\begin{proofdetail}
\begin{proof}
Uniqueness of $A^{\Delta}$ follows by an induction on $k$ in the length $\delta_k$ of $\tau$, with the base case being taken care of by (\ref{eqn:apartofdoob:0})-(\ref{eqn:apartofdoob:00}) and the induction step being taken of by  (\ref{eqn:apartofdoob}); and then uniqueness of $N^{\Delta}$ follows from (\ref{eqn:apartofdoob:0}). For existence, we first define an auxiliary computable $\Delta$-ary function $H^{\Delta}$. Define $H^{\Delta}(\tau) = L(\tau)$ if $\tau$ is of length $\delta_0$. If $\tau$ is of length $\delta_k$ and $\sigma$ is of length $\delta_{k+1}-\delta_k$ then define: 
\begin{equation*}
H^{\Delta}(\tau\,\sigma) = L^{\Delta}(\tau\,\sigma)-
\big(\sum_{\left|\rho\right|=\delta_{k+1}-\delta_k} L^{\Delta}(\tau\,\rho) \nu(\tau\,\rho \mid \tau) \big)
\end{equation*}
Since $\sigma$ does not appear after the minus sign in $H^{\Delta}(\tau\,\sigma)$, we have the following:
\begin{equation}\label{eqn:hproperty2}
\sum_{\left|\rho\right|=\delta_{k+1}-\delta_k} H^{\Delta}(\tau \rho) \nu(\tau \rho\mid \tau)=0
\end{equation}

Then we define computable $\Delta$-ary functions $N^{\Delta},A^{\Delta}$ as follows, where $\tau$ is of length~$\delta_k$:
\begin{equation*}
N^{\Delta}(\tau)=\sum_{j\leq k} H^{\Delta}(\tau\upharpoonright \delta_j), \hspace{10mm} A^{\Delta}(\tau)=L^{\Delta}(\tau)-N^{\Delta}(\tau)
\end{equation*}

Then, by (\ref{eqn:hproperty2}), $N^{\Delta}$ is a $\Delta$-ary $\nu$-martingale:
\begin{equation*}
\sum_{\left|\rho\right|=\delta_{k+1}-\delta_k} N^{\Delta}(\tau\,\rho) \nu(\tau\,\rho\mid \tau) = N^{\Delta}(\tau)+\sum_{\left|\rho\right|=\delta_{k+1}-\delta_k} H^{\Delta}(\tau \rho) \nu(\tau \rho\mid \tau)= N^{\Delta}(\tau)
\end{equation*}

Finally, one has the following, where the inequality at the end comes from $L$ being a $\Delta$-ary $\nu$-submartingale: 
\begin{align*}
&A^{\Delta}(\tau\,\sigma)-A^{\Delta}(\tau) = L^{\Delta}(\tau\,\sigma)-N^{\Delta}(\tau\,\sigma)-\big( L^{\Delta}(\tau)-N^{\Delta}(\tau) \big) \\
& = L^{\Delta}(\tau\,\sigma)-L^{\Delta}(\tau)-\big( N^{\Delta}(\tau\,\sigma)-N^{\Delta}(\tau) \big)  \\
& = L^{\Delta}(\tau\,\sigma)-L^{\Delta}(\tau)-H^{\Delta}(\tau\,\sigma) \\
& = \big(\sum_{\left|\rho\right|=\delta_{k+1}-\delta_k} L^{\Delta}(\tau \rho) \nu(\tau \rho \mid \tau) \big) - L^{\Delta}(\tau) \geq 0 
\end{align*}
\end{proof}
\end{proofdetail}

Finally, we have the following in parallel to Theorem~\ref{prop:klandtheincrements}:
\begin{thm}\label{prop:klandtheincrements2}
Suppose that $\mathscr{G}_n$ is augmented with $g$. 

Suppose that an $\sim_g$-equivalence class $\Delta=[\delta_0]$ is fixed with least member $\delta_0$ and enumerated as $\delta_0<\delta_1<\cdots$.  

Define $\Delta$-ary computable $\nu$-submartingale by $L^{\Delta}(\tau)=-\ln \frac{\mu(\tau)}{\nu(\tau)}$, and let $A^{\Delta}$ be the 
$\Delta$-predictable process from the $\Delta$-effective Doob decomposition of $L^{\Delta}$. Then for all $k\geq 0$ one has
\begin{equation}\label{eqn:klsub2}
 D_{\mathscr{G}_{\delta_k}}(\nu \mid \mu)(\omega)   = A^{\Delta}_{k+1}(\omega)-A^{\Delta}_k(\omega)
\end{equation}
\end{thm}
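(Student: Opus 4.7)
The plan is to mimic the proof of Theorem~\ref{prop:klandtheincrements}, replacing the single-step horizon $\mathscr{F}_{n+1}$ with $\mathscr{G}_{\delta_k}=\mathscr{F}_{\delta_{k+1}}$ and using the $\Delta$-ary Doob decomposition from Theorem~\ref{thm:doobdecomp2} in place of the binary one. The key observation that sets everything up is that, since $\delta_{k+1}=g(\delta_k)$ by the enumeration of the equivalence class $\Delta=[\delta_0]_g$ and since $\mathscr{G}_n=\mathscr{F}_{g(n)}$ by augmentation, we have $\mathscr{G}_{\delta_k}=\mathscr{F}_{\delta_{k+1}}$. Therefore Proposition~\ref{prop:calculatekullback2} applies directly with $n=\delta_k$.

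First, I would apply Proposition~\ref{prop:calculatekullback2} to expand $D_{\mathscr{G}_{\delta_k}}(\nu\mid\mu)(\omega)$ as a sum over binary strings $\sigma$ of length $\delta_{k+1}-\delta_k$. Then I would rewrite each log-ratio using
\[
\ln\frac{\nu((\omega\upharpoonright\delta_k)\,\sigma\mid\omega\upharpoonright\delta_k)}{\mu((\omega\upharpoonright\delta_k)\,\sigma\mid\omega\upharpoonright\delta_k)} = L^{\Delta}((\omega\upharpoonright\delta_k)\,\sigma)-L^{\Delta}(\omega\upharpoonright\delta_k),
\]
which is legitimate because both $\delta_k$ and $\delta_{k+1}$ belong to $\Delta$, so $L^{\Delta}$ is defined at both arguments. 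Distributing the outer $\nu$-weights over this difference and using that $\sum_{|\sigma|=\delta_{k+1}-\delta_k}\nu((\omega\upharpoonright\delta_k)\,\sigma\mid\omega\upharpoonright\delta_k)=1$, I obtain
\[
D_{\mathscr{G}_{\delta_k}}(\nu\mid\mu)(\omega) = \Bigl(\sum_{|\sigma|=\delta_{k+1}-\delta_k} L^{\Delta}((\omega\upharpoonright\delta_k)\,\sigma)\,\nu((\omega\upharpoonright\delta_k)\,\sigma\mid\omega\upharpoonright\delta_k)\Bigr) - L^{\Delta}(\omega\upharpoonright\delta_k).
\]

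The right-hand side of the last display is, by Theorem~\ref{thm:doobdecomp2}(\ref{eqn:apartofdoob2}) with $\tau=\omega\upharpoonright\delta_k$, exactly $A^{\Delta}(\tau\,\sigma')-A^{\Delta}(\tau)$ for any $\sigma'$ of length $\delta_{k+1}-\delta_k$ (the expression being independent of the choice of $\sigma'$, which is the whole point of calling $A^{\Delta}$ predictable). Choosing $\sigma'$ to be the bits of $\omega$ strictly between positions $\delta_k$ and $\delta_{k+1}$ yields $\tau\,\sigma'=\omega\upharpoonright\delta_{k+1}$ and hence $A^{\Delta}(\omega\upharpoonright\delta_{k+1})-A^{\Delta}(\omega\upharpoonright\delta_k)=A^{\Delta}_{k+1}(\omega)-A^{\Delta}_k(\omega)$, as required. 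There is no genuine obstacle: the only new bookkeeping is that extensions now happen in variable-length blocks $\delta_{k+1}-\delta_k$ rather than by single bits, but this has already been absorbed into the $\Delta$-ary framework built in Definition~\ref{defn:dyadicv2} and Theorem~\ref{thm:doobdecomp2}.
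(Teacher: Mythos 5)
Your proposal is correct and follows essentially the same route as the paper: identify $\mathscr{G}_{\delta_k}=\mathscr{F}_{\delta_{k+1}}$, expand the divergence via Proposition~\ref{prop:calculatekullback2}, rewrite the log-likelihood ratios as increments of $L^{\Delta}$, and match the resulting expression with clause (\ref{eqn:apartofdoob2}) of the $\Delta$-ary Doob decomposition. The only nitpick is that the block $\sigma'$ consists of the bits of $\omega$ at positions $\delta_k$ through $\delta_{k+1}-1$ (inclusive of $\delta_k$), not ``strictly between'' $\delta_k$ and $\delta_{k+1}$, but your displayed identity $\tau\,\sigma'=\omega\upharpoonright\delta_{k+1}$ makes the intent unambiguous.
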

\begin{proofdetail}
\begin{proof}
Let $m=\delta_k$ and let $\sigma$ be of length $\delta_{k+1}-\delta_k$. Then one has: 
\begin{align*}
& \ln \frac{\nu((\omega\upharpoonright m)\, \sigma \mid \omega\upharpoonright m\,)}{\mu((\omega\upharpoonright m)\, \sigma \mid \omega\upharpoonright m\,)}=-\ln \frac{\mu((\omega\upharpoonright m)\, \sigma \mid \omega\upharpoonright m\,)}{\nu((\omega\upharpoonright m)\, \sigma \mid \omega\upharpoonright m\,)} \\
& =-\bigg(\ln \mu((\omega\upharpoonright m)\, \sigma \mid \omega\upharpoonright m\,) - \ln \nu((\omega\upharpoonright m)\, \sigma \mid \omega\upharpoonright m\,)\bigg) \\
&= -\bigg(\ln \mu((\omega\upharpoonright m)\, \sigma)-\ln \mu((\omega\upharpoonright m)\,)-\big(\ln \nu((\omega\upharpoonright m)\, \sigma)-\ln \nu(\omega\upharpoonright m\,)\big)\bigg) \\
&= -\ln \mu((\omega\upharpoonright m)\, \sigma)+\ln \mu(\omega\upharpoonright m\,)+\ln \nu((\omega\upharpoonright m)\, \sigma)-\ln \nu(\omega\upharpoonright m\,) \\
&= -\big(\ln \mu((\omega\upharpoonright m)\, \sigma)-\ln \nu((\omega\upharpoonright m)\, \sigma)\big)+\big(\ln \mu((\omega\upharpoonright m)\,)-\ln \nu(\omega\upharpoonright m\,)\big) \\
& = -\ln M^{\Delta}((\omega\upharpoonright m)\,\sigma) + \ln M^{\Delta}(\omega\upharpoonright m\,) = L^{\Delta}((\omega\upharpoonright m)\,\sigma)-L^{\Delta}(\omega\upharpoonright m\,)
\end{align*}
where $M^{\Delta}(\tau)=\frac{\mu(\tau)}{\nu(\tau)}$ is the $\Delta$-ary $\nu$-martingale.

Then we get the following by setting $n:=\delta_k$ on the left-hand side of (\ref{eqn:dn1v2}), and by abbreviating $m=\delta_k$ on the right-hand side:
\begin{align*}
 & D_{\mathscr{G}_{\delta_k}}(\nu \mid \mu)(\omega)= \sum_{\left|\sigma\right|=\delta_{k+1}-\delta_k} \bigg(\ln \frac{\nu((\omega\upharpoonright m)\, \sigma \mid \omega\upharpoonright m\,)}{\mu((\omega\upharpoonright m)\, \sigma \mid \omega\upharpoonright m\,)} \bigg) \cdot \nu((\omega\upharpoonright m)\, \sigma \mid \omega\upharpoonright m\,) \\
=& \sum_{\left|\sigma\right|=\delta_{k+1}-\delta_k}  \big( L^{\Delta}((\omega\upharpoonright m)\,\sigma)-L^{\Delta}(\omega\upharpoonright m\,) \big) \cdot \nu((\omega\upharpoonright m)\,\sigma \mid \omega\upharpoonright m\,) \\
= &  \big(\sum_{\left|\sigma\right|=\delta_{k+1}-\delta_k} L^{\Delta}((\omega\upharpoonright m)\, \sigma) \nu((\omega\upharpoonright m)\,\sigma \mid \omega\upharpoonright m) \big) - L^{\Delta}(\omega\upharpoonright m)
\end{align*}
From this last line and Theorem~\ref{thm:doobdecomp2}(\ref{eqn:apartofdoob2}) one gets the desired conclusion (\ref{eqn:klsub2}).
\end{proof}
\end{proofdetail}

\begin{thm}\label{thm:thmmainmiddle}
Suppose that the merging horizon $\mathscr{G}_n$ is finitely augmented. Then $\mathsf{MLR}^{\nu}= \mathsf{MR}^{\nu}_1(\abcon_{kl}, \mathscr{G}_n, D)$ and $\mathsf{SR}^{\nu}\supseteq \mathsf{MR}^{\nu}_1(\abcon_{klc}, \mathscr{G}_n, D)$.
\end{thm}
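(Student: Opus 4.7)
The plan is to bootstrap from Theorem~\ref{thm:main:mlr:weak} together with the $\Delta$-ary Doob machinery developed above. Two of the three required containments\textemdash the backward direction for MLR and the Schnorr containment\textemdash fall out for free from anti-monotonicity, and the only real work is the forward direction for MLR, which amounts to running the proof of Theorem~\ref{thm:main:mlr:weak} in parallel along each $\sim_g$-equivalence class.

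First I would dispose of $\mathsf{MR}^{\nu}_1(\abcon_{kl}, \mathscr{G}_n, D) \subseteq \mathsf{MLR}^{\nu}$ and $\mathsf{MR}^{\nu}_1(\abcon_{klc}, \mathscr{G}_n, D) \subseteq \mathsf{SR}^{\nu}$ uniformly. Since $\mathscr{G}_n$ is a merging horizon one has $\mathscr{F}_{n+1} \subseteq \mathscr{G}_n$ for every $n \geq 0$, so the anti-monotonicity principle recorded in \S\ref{sec:intro} yields $\mathsf{MR}^{\nu}_1(\preceq, \mathscr{G}_n, D) \subseteq \mathsf{MR}^{\nu}_1(\preceq, \mathscr{F}_{n+1}, D)$ for either $\preceq \in \{\abcon_{kl}, \abcon_{klc}\}$. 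Combined with Theorem~\ref{thm:main:mlr:weak}, these give both containments at once.

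The remaining task is the forward containment $\mathsf{MLR}^{\nu} \subseteq \mathsf{MR}^{\nu}_1(\abcon_{kl}, \mathscr{G}_n, D)$. Fix $\omega \in \mathsf{MLR}^{\nu}$ and a computable full-support $\mu$ with $\nu \abcon_{kl} \mu$. Let $g$ witness that $\mathscr{G}_n = \mathscr{F}_{g(n)}$ is finitely augmented, and let $\{\Delta_i : i \in I\}$ be the (finite) set of $\sim_g$-equivalence classes, each enumerated in increasing order as $\Delta_i = \{\delta^{(i)}_0 < \delta^{(i)}_1 < \cdots\}$. Since the $\Delta_i$ partition $\mathbb{N}$ and $g(\delta^{(i)}_k) = \delta^{(i)}_{k+1}$, Theorem~\ref{prop:klandtheincrements2} regroups the full sum as $|I|$ telescoping sums:
\begin{equation*}
\sum_{n \geq 0} D_{\mathscr{G}_n}(\nu \mid \mu)(\omega) \;=\; \sum_{i \in I} \sum_{k \geq 0} \bigl(A^{\Delta_i}_{k+1}(\omega) - A^{\Delta_i}_k(\omega)\bigr) \;=\; \sum_{i \in I} f_i(\omega),
\end{equation*}
where $A^{\Delta_i}$ is the $\Delta_i$-ary predictable process of $L^{\Delta_i}(\tau) = -\ln\frac{\mu(\tau)}{\nu(\tau)}$ and $f_i(\omega) := \sup_k A^{\Delta_i}_k(\omega)$; the telescoping is legitimate because every summand is non-negative and because $A^{\Delta_i}_0 \equiv 0$, which is immediate from Theorem~\ref{thm:doobdecomp2}(\ref{eqn:apartofdoob:02})--(\ref{eqn:apartofdoob:002}).

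To finish, I would verify that each $f_i$ is an $L_1(\nu)$ Martin-L\"of test; then $\omega \in \mathsf{MLR}^{\nu}$ forces $f_i(\omega) < \infty$ for each $i$, and finiteness of $|I|$ makes the displayed sum finite. Lower semi-computability of $f_i$ is immediate, as it is a non-decreasing supremum of computable simple functions. For the $\nu$-expectation bound, since $N^{\Delta_i}$ is a $\Delta_i$-ary $\nu$-martingale one has $\mathbb{E}_\nu A^{\Delta_i}_k = \mathbb{E}_\nu L^{\Delta_i}_k - \mathbb{E}_\nu L^{\Delta_i}_0 \leq \sup_n \mathbb{E}_\nu L_n < \infty$, the last inequality being exactly the hypothesis $\nu \abcon_{kl} \mu$; monotone convergence then gives $\mathbb{E}_\nu f_i < \infty$, as required. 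The main obstacle is really just the bookkeeping for the equivalence-class decomposition: once Proposition~\ref{prop:calculatekullback2}, Theorem~\ref{thm:doobdecomp2} and Theorem~\ref{prop:klandtheincrements2} are in hand, the proof parallels the one-step case almost line for line, with finite-augmentedness being used only to pass from the finitely many individual finiteness statements $f_i(\omega) < \infty$ to the overall conclusion $\sum_i f_i(\omega) < \infty$.
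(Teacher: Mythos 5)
Your proposal is correct and follows essentially the same route as the paper's own proof: the backward containments via anti-monotonicity and Theorem~\ref{thm:main:mlr:weak}, and the forward containment by partitioning $\mathbb{N}$ into the finitely many $\sim_g$-classes, applying the $\Delta$-ary Doob decomposition and Theorem~\ref{prop:klandtheincrements2} on each class, and checking that each $f_i=\sup_k A^{\Delta_i}_k$ is a Martin-L\"of $L_1(\nu)$ test. The only cosmetic difference is that you bound $\mathbb{E}_\nu A^{\Delta_i}_k$ by dropping the non-negative term $\mathbb{E}_\nu L^{\Delta_i}_0$, where the paper carries it along explicitly; both are valid.
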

\begin{proof}
Let $g$ be the computable function with which $\mathscr{G}_n$ is finitely augmented. Let $\{\Delta_i: i<\ell\}$ be its finitely many equivalence classes. For each $i<\ell$ we enumerate $\Delta_i$ in increasing order as $\delta_{i,0}<\delta_{i,1}<\cdots$.

For the backward direction: for all $n\geq 0$, since $g(n)>n$, one has the containment $\mathscr{F}_{n+1}\subseteq \mathscr{F}_{g(n)}=\mathscr{G}_n$. Then by antimonotonicity and Theorem~\ref{thm:main:mlr:weak}, we have $ \mathsf{MR}^{\nu}_1(\abcon_{kl}, \mathscr{G}_n, D)\subseteq \mathsf{MR}^{\nu}_1(\abcon_{kl}, \mathscr{F}_{n+1}, D) =\mathsf{MLR}^{\nu}$. Likewise, we have the containment $ \mathsf{MR}^{\nu}_1(\abcon_{klc}, \mathscr{G}_n, D)\subseteq \mathsf{MR}^{\nu}_1(\abcon_{klc}, \mathscr{F}_{n+1}, D) =\mathsf{SR}^{\nu}$.

It remains to prove the forward direction. Suppose that $\omega$ is in $\mathsf{MLR}^{\nu}$. Suppose that $\nu\abcon_{kl} \mu$. Let $L(\sigma)=-\ln \frac{\mu(\sigma)}{\nu(\sigma)}$ be the dyadic $\nu$-submartingale. Then $\sup_n \mathbb{E}_{\nu} L_n<\infty$. For each $i<\ell$, let $L^{\Delta_i}(\sigma)=-\ln \frac{\mu(\sigma)}{\nu(\sigma)}$ be the $\Delta_i$-ary $\nu$-submartingale. For each $i<\ell$, let $L^{\Delta_i}=N^{\Delta_i}+A^{\Delta_i}$ be the effective $\Delta_i$-ary Doob decomposition. We have $\mathbb{E}_{\nu} A_k^{\Delta_i}=\mathbb{E}_{\nu} L_k^{\Delta_i} -\mathbb{E}_{\nu} N^{\Delta_i}_k = \mathbb{E}_{\nu} L_k^{\Delta_i}- \mathbb{E}_{\nu} N_0^{\Delta_i}$. Then:
\begin{equation}\label{eqn:lastone}
\sup_k \mathbb{E}_{\nu} A_k^{\Delta_i}= \big(\sup_k \mathbb{E}_{\nu} L_k^{\Delta_i}\big)- \mathbb{E}_{\nu} N_0^{\Delta_i} \leq  \big(\sup_k \mathbb{E}_{\nu} L_k\big)- \mathbb{E}_{\nu} N_0^{\Delta_i}<\infty
\end{equation}
Let $f_i(\omega)=\lim_k A_k^{\Delta_i}(\omega) = \sup_k A^{\Delta_i}_k(\omega)$. By the Monotone Convergence Theorem, we have $\mathbb{E}_{\nu} f_i =\lim_k \mathbb{E}_{\nu} A_k^{\Delta_i}=\sup_k \mathbb{E}_{\nu} A_k^{\Delta_i}<\infty$. Then $f_i$ is a Martin-L\"of $L_1(\nu)$ test. Since $\omega$ is in $\mathsf{MLR}^{\nu}$, we have $f_i(\omega)<\infty$. Further, since $A_k^{\Delta_i}$ is non-negative and increasing and $A_0^{\Delta_i}(\omega)=0$, by telescoping we have $f_i(\omega)=\sum_k (A_{k+1}^{\Delta_i}(\omega)-A_k^{\Delta_i}(\omega))$. Then by Theorem~\ref{prop:klandtheincrements2} we have
\begin{align}
& \sum_n D_{\mathscr{G}_n}(\nu \mid \mu)(\omega) = \sum_{i<\ell} \sum_k D_{\mathscr{G}_{\delta_{i,k}}}(\nu\mid \mu)(\omega) \notag \\
& =  \sum_{i<\ell} \sum_k \big(A^{\Delta_i}_{k+1}(\omega)-A^{\Delta_i}_k(\omega)\big) = \sum_{i<\ell} f_i(\omega) <\infty \label{eqn:lastlastone}
\end{align}
\end{proof}

Theorem~\ref{thm:main:mlr:weakmiddle} follows from Theorem \ref{thm:thmmainmiddle} by applying  it to Example~\ref{ex:gellmap}. It is not obvious whether the stated inclusion in these theorems involving Schnorr randomness can be improved to an identity. To secure this, one would have to find a way to ensure that the quantity on the very left-hand side of (\ref{eqn:lastone}) is computable in addition to being finite. 

Further, it is not obvious how to improve Theorem~\ref{thm:main:mlr:weakmiddle} to handle all augmented merging horizons, rather than just the finitely augmented ones. The previous proof, particularly the last step in (\ref{eqn:lastlastone}), stops working once the number of equivalence classes becomes infinite.

\bibliographystyle{amsalpha}
\bibliography{01-HWZ-project-02-weak-merging}

\end{document}